\definecolor{col4}{rgb}{0.7,0,0.3}
\newtheorem{teor}{Theorem}[section]
\newtheorem{prop}[teor]{Proposition}
\newtheorem{coro}[teor]{Corollary}
\theoremstyle{definition}
\newtheorem{defi}[teor]{Definition}
\newtheorem{nota}[teor]{Remark}
\numberwithin{equation}{section}
\newcommand{\N}{\mathbb{N}}
\newcommand{\R}{\mathbb{R}}
\newcommand{\A}{\mathbb{A}}
\newcommand{\B}{\mathbb{B}}
\newcommand{\F}{\mathbb{F}}
\newcommand{\dd}{\textsf{d}}
\newcommand{\wit}{\widetilde}
\newcommand{\lsm}{\left[\!\begin{smallmatrix}}
\newcommand{\rsm}{\end{smallmatrix}\!\right]}
\newcommand{\nbd}{\nobreakdash}
\newcommand{\des}{\displaystyle}
\DeclareMathOperator{\cls}{cls} 
\DeclareMathOperator{\Int}{Int} 
\begin{document}
\title[Attractors for non-autonomous reaction-diffusion equations]
{Global and cocycle attractors for non-autonomous reaction-diffusion equations. The case of null upper Lyapunov exponent}
\author[T. Caraballo]{Tom\'{a}s Caraballo}
\author[J.A. Langa]{Jos\'{e} A. Langa}
\author[R. Obaya]{Rafael Obaya}
\author[A.M. Sanz]{Ana M. Sanz}
\address[T. Caraballo and J.A. Langa]{Departamento de Ecuaciones Diferenciales y An\'{a}lisis Num\'{e}rico, Universidad de Sevilla, Apdo. de Correos 1160, 41080 Sevilla, Spain} \email{caraball@us.es} \email{langa@us.es}
\address[R. Obaya]{Departamento de Matem\'{a}tica
Aplicada, E. Ingenier\'{\i}as Industriales, Universidad de Valladolid,
47011 Valladolid, Spain, and member of IMUVA, Instituto de Investigaci\'{o}n en
Matem\'{a}ticas, Universidad de Valladolid.}
 \email{rafoba@wmatem.eis.uva.es}
\address[A.M. Sanz]{Departamento de Did\'{a}ctica de las Ciencias Experimentales, Sociales y de la Matem\'{a}tica,
Facultad de Educaci\'{o}n, Universidad de Valladolid, 34004 Palencia, Spain,
and member of IMUVA, Instituto de Investigaci\'{o}n en  Mate\-m\'{a}\-ti\-cas, Universidad de
Valladolid.} \email{anasan@wmatem.eis.uva.es}
\thanks{R. Obaya and A.M. Sanz were partly supported by MINECO/FEDER
under project MTM2015-66330, and the European Commission under project H2020-MSCA-ITN-2014 643073 CRITICS}
\thanks{T. Caraballo and J.A.  Langa were partially supported by  Junta de Andaluc\'{\i}a under Proyecto de Excelencia FQM-1492 and  FEDER Ministerio de Econom\'{\i}a y Competitividad grant MTM2015-63723-P}
\date{}
\begin{abstract}
In this paper we obtain a detailed description of the global and cocycle attractors for the skew-product semiflows induced by the mild solutions of a family of scalar linear-dissipative parabolic problems over a minimal and uniquely ergodic flow.  We consider the case of null upper Lyapunov exponent for the linear part of the problem. Then, two different types of attractors can appear, depending on whether the linear equations have a bounded or an unbounded associated real cocycle. In the first case (e.g.~in periodic equations), the structure of the attractor is simple, whereas in the second case (which occurs in aperiodic equations), the attractor is a pinched set with a complicated structure. We describe situations when the attractor is chaotic in measure in the sense of Li-Yorke. Besides, we obtain a non-autonomous discontinuous pitchfork bifurcation scenario for concave equations, applicable for instance to a linear-dissipative version of the  Chafee-Infante equation.
\end{abstract}
\keywords{Non-autonomous dynamical systems; global and cocycle attractors; linear-dissipative PDEs;  Li-Yorke chaos in measure; non-autonomous bifurcation theory}
\renewcommand{\subjclassname}{\textup{2010} Mathematics Subject Classification}
\maketitle
\section{Introduction}\label{sec-intro}\noindent
In this paper we investigate the dynamical structure of the global and cocycle attractors of the skew-product semiflow generated by a family of scalar linear-dissipative  reaction-diffusion equations over a minimal and uniquely ergodic flow, with Neumann or Robin boundary conditions. We assume that the terms involved in the equations satisfy standard regularity assumptions which provide the existence, uniqueness, global definition and continuous dependence of mild solutions with respect to initial conditions.
\par
If $P$ denotes the hull of the time-dependent coefficients of a particular equation, then often the flow defined by time-translation on $P$ is minimal and uniquely ergodic, with a unique ergodic measure $\nu$. These are the hypotheses assumed in this
paper, which in particular includes the case of
almost periodic equations.  If $U\subset \R^m$ denotes the spatial domain of the equation, the coefficients of the differential equations are continuous functions from  $P \times \bar U \times \R$ to $\R$ that  can be identified with continuous functions from $P  \times C(\bar U)$ to $C(\bar U)$.  In this formalism the solutions of the linear-dissipative equations generate a continuous global  skew-product semiflow   $\tau$ on $P \times C(\bar U)$.
\par
In this work we analyze the structure of the attractors when $\lambda_P$, the upper Lyapunov exponent of the linear part of the linear-dissipative equations, is null and the flow on $P$ is not periodic. We prove that generically the global attractor $\A= \cup_{p\in P} \{p\}\times A(p)$ is a pinched compact set with ingredients of dynamical complexity like sensitive dependence in relevant subsets of this compact set (see Glasner and Weiss~\cite{glwe}). We establish conditions on the coefficient of the linear equations that provide nontrivial sections $A(p)$ for the elements $p$ in an invariant subset $P_f$ of $P$ with complete measure and prove that, in this case, the restriction of the flow on the global attractor $(\A, \tau)$ is chaotic in the sense of Li-Yorke.  We can understand these results in the framework of non-autonomous bifurcation theory as a discontinuous pitchfork bifurcation of minimal sets.
\par
We next describe the structure and main results of the paper. Section~\ref{sec-preli} contains some basic facts in non-autonomous dynamical systems which will be required in the rest of the paper.
\par
In Section \ref{sec-mild solutions} we review the construction of the skew-product semiflow induced by the mild solutions of a very general family of parabolic partial differential equations (PDEs for short) over a minimal flow $(P,\theta,\R)$ just denoted by $\theta_tp=p{\cdot}t$. We also state a result on comparison of solutions and the strong monotonicity of the semiflow.
\par
Section \ref{sec-linear} is devoted to the study of families of scalar parabolic linear PDEs $\partial y /\partial t  =  \Delta \, y+h(p{\cdot}t,x)\,y$, $t>0$, $x\in U$ for each $p\in P$ ($P$ a minimal and uniquely ergodic flow) with Neumann or Robin boundary conditions. Mild solutions generate a linear skew-product semiflow $\tau_L$  on $P \times C(\bar U)$ which is strongly monotone and hence admits a continuous separation $C(\bar U)= X_1(p) \oplus X_2(p)$, for every $p \in P$, in the terms stated in Pol\'{a}\v{c}ik and Tere\v{s}\v{c}\'{a}k~\cite{pote} and Shen and Yi~\cite{shyi}. The restriction of $\tau_L$ to the principal bundle $\cup_{p\in P}\{p\}\times X_1(p)$ generates a continuous 1-dim linear cocycle $c(t,p)$ whose Lyapunov exponents match the upper Lyapunov exponent $\lambda_P$.  We prove the continuous dependence of  $\lambda_P(h)$ on $h$, the  coefficient in the equations. Consequently, the set $C_0(P \times\bar U)= \{h \in C(P \times \bar U) \mid \lambda_P(h)=0\}$ is closed. We denote by $B(P\times \bar U)$ the subset of $C_0(P \times \bar U)$ formed by the functions $h$ with an associated coboundary cocycle $\ln c(t,p)$, that is, there is a $k \in C(P)$ such that $\ln c(t,p)= k(p{\cdot}t)-k(p)$ for any $p\in P$ and $t\in\R$.
\par
We show, on the one hand, that if the coefficient $h$ is in $B(P\times \bar U)$,  then $P \times \Int C_+(\bar U)$ contains  minimal sets that are copies of the base $P$, all the nontrivial positive solutions are strongly positive,  remain uniformly away from 0 and bounded above and eventually approximate solutions with the same recurrence in time as that of the initial problem; for instance, they are asymptotically almost periodic if the base flow is almost periodic. On the other hand, if the linear coefficient $h$ is in $\mathcal{U}(P \times \bar U)=C_0(P \times \bar U)\setminus B(P \times \bar U)$, then $P \times C_+(\bar U)$ contains pinched compact invariant sets, all the nontrivial positive solutions are strongly positive and for  all the equations given by $p$ in a residual subset of P their modulus oscillates from $0$ to $\infty$ as time goes to $\infty$. In addition, when the base flow on $P$ is aperiodic, we deduce that  $\mathcal{U}(P \times \bar U)$ is a  residual subset of $C_0(P \times \bar U)$ as its complementary set $B(P \times\bar U)$ is a dense subset of first category. The above arguments allow us to show that $\lambda_P(h)$ has a strictly convex variation on $h$, which becomes linear in the trivial case when  $h_2-h_1$, the  difference of the  coefficients involved in the convex combination, only depends on  $p \in P$.
\par
In Section \ref{sec-nonlinear} we study  the behaviour of  the solutions of a family of scalar linear-dissipative  reaction-diffusion equations $\partial y/\partial t  =  \Delta \, y+h(p{\cdot}t,x)\,y+g(p{\cdot}t,x,y)$, $t>0$, $x\in U$, for each $p\in P$, and the dynamical properties of the induced skew-product semiflow $\tau$ on $P \times C(\bar U)$. Standard arguments taken from Caraballo and Han~\cite{caha} or Carvalho et al.~\cite{calaro}  allow us to  deduce the existence of a global attractor $\A= \cup_{p\in P} \{p\}\times A(p)$, and thus $\{A(p)\}_{p \in P} $ defines the cocycle attractor of the continuous skew-product semiflow. In addition, for each $p \in P$  the  invariant family of compact sets $\{A(p{\cdot}t)\}_{t \in \R}$ provides the pullback atractor of the process generated by the solutions of the parabolic equation obtained by evaluation of the coefficients along the trajectory of $p$.
\par
The structure of the global and cocycle attractors in the case that  $\lambda_P$, the upper Lyapunov exponent  of the  linear part of the reaction-diffusion equations, is different from zero has been investigated in Cardoso et al.~\cite{cardoso}. In this work we study the same problem when $\lambda_P=0$ to show that these attractors exhibit a rich dynamics that frequently contains ingredients of high complexity. The global attractor has upper and lower boundaries given by the graphs of two semicontinuous functions $a$ and $b$. For simplicity  we asume that the coefficients of the equation are odd with respect to the dependent variable $y$, which implies that $a=-b$.
\par
More precisely, if $h$, the coefficient of the linear part, is in $B(P \times \bar U)$, then $b$ is continuous and strongly positive and the global atractor is included in the principal bundle, whereas if $h$ is in $\mathcal{U} (P \times \bar U)$, then there is a residual invariant  subset $P_{\rm s} \subset P$  such that $b(p)=0$ for every $p \in P_{\rm s}$ and $P_{\rm f}=P\setminus P_{\rm s}$ is a dense invariant subset of first category with $b(p)\gg 0$  for every $p \in P_{\rm f}$. We prove that $p \in P_{\rm s}$ if and only if $\sup_{t\leq 0} c(t,p)= \infty$  and conversely $p \in P_{\rm f}$ if and only if $\sup_{ t\leq 0} c(t,p) < \infty$. Later we describe precise examples of  functions $h$ such that $\nu (P_{\rm f})=1$ and prove that in this case the restriction of the equations on the section $A(p)$ of the attractor is linear for almost every $p \in P$ and the flow $(\A, \tau)$ is fiber-chaotic in measure in the sense of Li-Yorke.  In consequence, the main results on the structure and properties of the attractors obtained in Caraballo et al.~\cite{caloNonl} for scalar almost periodic linear-dissipative ordinary differential equations (ODEs for short) remain valid for the class of reaction-diffusion models here considered.
\par
Finally,  we introduce a parameter in the equations and analyze the evolution of the structure of the global attractor when the upper Lyapunov exponent of the linear part crosses through zero. Assuming that the nonlinear term is concave and using  results by N\'{u}\~{n}ez et al.~\cite{nuos4} we show that this transition provides a discontinuous bifurcation of attractors and describes a discontinuous pitchfork  bifurcation diagram for the minimal sets. The results in this work offer a dynamical description of the often complicated structure of the global attractor at the bifurcation point.
\section{Basic notions}\label{sec-preli}\noindent
In this section we include some preliminaries about
topological dynamics for non-autonomous dynamical systems.
\par
Let $(P,d)$ be a compact
metric space. A real {\em continuous flow\/} $(P,\theta,\R)$ is
defined by a continuous map $\theta: \R\times P \to  P,\;
(t,p)\mapsto \theta(t,p)=\theta_t(p)=p{\cdot}t$ satisfying
\begin{enumerate}
\renewcommand{\labelenumi}{(\roman{enumi})}
\item $\theta_0=\text{Id},$
\item $\theta_{t+s}=\theta_t\circ\theta_s$ for each $s$, $t\in\R$\,.
\end{enumerate}
The set $\{ \theta_t(p) \mid t\in\R\}$ is called the {\em orbit\/}
of the point $p$. We say that a subset $P_1\subset P$ is {\em
$\theta$-invariant\/} if $\theta_t(P_1)=P_1$ for every $t\in\R$.
The flow $(P,\theta,\R)$ is called {\em minimal\/} if it does not contain properly any other
compact $\theta$-invariant set, or equivalently,  if every
orbit is dense. The flow is {\em distal\/} if the orbits of  any two distinct
points $p_1,\,p_2\in P$  keep at a positive distance,
that is, $\inf_{t\in \R}d(\theta(t,p_1),\theta(t,p_2))>0$; and it is {\em almost periodic\/} if the family of maps $\{\theta_t\}_{t\in \R}:P\to P$ is uniformly equicontinuous. An almost periodic flow is always distal.
\par
A finite regular measure defined on the Borel sets of $P$ is called
a Borel measure on $P$. Given $\mu$ a normalized Borel measure on
$P$, it is {\em $\theta$-invariant\/} if $\mu(\theta_t(P_1))=\mu(P_1)$ for every Borel subset
$P_1\subset P$ and every $t\in \R$. It is {\em ergodic\/}  if, in
addition, $\mu(P_1)=0$ or $\mu(P_1)=1$ for every
$\theta$-invariant subset $P_1\subset P$.
We  denote by $\mathcal{M}(P)$ the set of
all positive and normalized $\theta$-invariant measures on $P$. This set is nonempty by the
Krylov\nbd-Bogoliubov theorem  when $P$ is a
compact metric space. We
say that $(P,\theta,\R)$ is {\em uniquely ergodic\/} if it has a
unique normalized invariant measure, which is then necessarily
ergodic. A minimal and almost periodic flow $(P,\theta,\R)$ is uniquely ergodic.
\par
A standard method to, roughly speaking, get rid of the time variation in a non-autonomous equation and build a non-autonomous dynamical system, is the so-called {\em hull\/} construction. More precisely, a function $f\in C(\R\times\R^m)$ is said to be {\em admissible\/} if for any
compact set $K\subset \R^m$, $f$ is bounded and uniformly continuous
on $\R\times K$. Provided that $f$ is admissible, its {\em hull\/} $P$ is the closure for the compact-open topology of the set of $t$-translates of $f$, $\{ f_t \mid t\in\R\}$ with $f_t(s,x)=f(t+s,x)$
for $s\in \R$ and $x\in\R^m$. The translation map $\R\times P\to P$,
$(t,p)\mapsto p{\cdot}t$ given by $p{\cdot}t(s,x)= p(s+t,x)$ ($s\in \R$ and $x\in\R^m$) defines a
continuous flow on the compact metric space $P$. This flow is minimal as far as the map $f$ has certain recurrent behaviour in time, such as periodicity, almost periodicity, or other weaker properties of recurrence. If the map $f(t,x)$ is uniformly almost periodic (that is, it is admissible and almost periodic in $t$ for any fixed $x$), then the flow on the hull is minimal and almost periodic. It is relevant to note that any minimal and uniquely ergodic flow which is not almost periodic is sensitive with respect to initial conditions (see Glasner and Weiss~\cite{glwe}).
\par
Let $\R_+=\{t\in\R\,|\,t\geq 0\}$. Given a continuous compact flow $(P,\theta,\R)$ and a
complete metric space $(X,\dd)$, a continuous {\em skew-product semiflow\/} $(P\times
X,\tau,\,\R_+)$ on the product space $P\times X$ is determined by a continuous map
\begin{equation*}
 \begin{array}{cccl}
 \tau \colon  &\R_+\times P\times X& \longrightarrow & P\times X \\
& (t,p,x) & \mapsto &(p{\cdot}t,u(t,p,x))
\end{array}
\end{equation*}
 which preserves the flow on $P$, referred to as the {\em base flow\/}.
 The semiflow property means that
\begin{enumerate}
\renewcommand{\labelenumi}{(\roman{enumi})}
\item $\tau_0=\text{Id},$
\item $\tau_{t+s}=\tau_t \circ \tau_s\;$ for all  $\; t$, $s\geq 0\,,$
\end{enumerate}
where again $\tau_t(p,x)=\tau(t,p,x)$ for each $(p,x) \in P\times X$ and $t\in \R_+$.
This leads to the so-called semicocycle property,
\begin{equation*}
 u(t+s,p,x)=u(t,p{\cdot}s,u(s,p,x))\quad\mbox{for $s,t\ge 0$ and $(p,x)\in P\times X$}\,.
\end{equation*}
\par
The set $\{ \tau(t,p,x)\mid t\geq 0\}$ is the {\em semiorbit\/} of
the point $(p,x)$. A subset  $K$ of $P\times X$ is {\em positively
invariant\/} if $\tau_t(K)\subseteq K$
for all $t\geq 0$ and it is $\tau$-{\em invariant\/} if $\tau_t(K)= K$
for all $t\geq 0$.  A compact $\tau$-invariant set $K$ for the
semiflow  is {\em minimal\/} if it does not contain any nonempty
compact $\tau$-invariant set  other than itself.
\par
A compact $\tau$-invariant set $K\subset P\times X$ is called a {\em pinched\/} set if there exists a residual set $P_0\subsetneq P$ such that for every $p\in P_0$ there is a unique element in $K$ with $p$ in the first component, whereas there are more than one if $p\notin P_0$.
\par
The reader can find in  Ellis~\cite{elli}, Sacker and
Sell~\cite{sase}, Shen and Yi~\cite{shyi} and references therein, a
more in-depth survey on topological dynamics.
\par
We now state the definitions of global attractor and cocycle attractor for skew-product semiflows. The books by    Caraballo and Han~\cite{caha},  Carvalho et al.~\cite{calaro} and Kloeden and Rasmussen~\cite{klra}  are  good references for this topic.
\par
We say that the skew-product semiflow $\tau$ has a {\em global attractor\/} if there exists an invariant compact set attracting bounded sets forwards in time; more precisely, if there is a compact set $\A\subset P\times X$ such that $\tau_t(\A) = \A$ for any $t\geq 0$ and $\lim_{t\to\infty} {\rm dist}(\tau_t(\B),\A)=0$ for any bounded set $\B\subset P\times X$, for the semi-Hausdorff distance.
\par
A {\em non-autonomous set\/} is a family $\{A(p)\}_{p\in P}$ of subsets of $X$ indexed by $p\in P$. It is said to be {\em compact\/} provided that $A(p)$ is a compact set in $X$ for every $p\in P$; and it is said to be {\em invariant\/} if for every $p\in P$, $u(t,p,A(p))=A(p{\cdot}t)$ for any $t\geq 0$.  A compact invariant non-autonomous set $\{A(p)\}_{p\in P}$ is called a {\em cocycle attractor\/} for the skew-product semiflow $\tau$ if it pullback attracts all bounded subsets $B\subset X$, that is, for any $p\in P$,
\[
\lim_{t\to\infty} {\rm dist}(u(t,p{\cdot}(-t),B),A(p))=0\,.
\]
It is well-known (see \cite{klra}) that, with $P$ compact, if $\A$ is a global attractor for $\tau$, then $\{A(p)\}_{p\in P}$, with $A(p)=\{x\in X\mid (p,x)\in \A\}$ for each $p\in P$, is a cocycle attractor.
\par
To finish, we include some basic notions on monotone skew-product semiflows. When the state space $X$ is a strongly ordered Banach space, that is, there is a closed convex solid cone of nonnegative vectors $X_+$ with a nonempty interior, then, a (partial) {\em strong order relation\/} on $X$ is
defined by
\begin{equation}\label{order}
\begin{split}
 x\le y \quad &\Longleftrightarrow \quad y-x\in X_+\,;\\
 x< y  \quad &\Longleftrightarrow \quad y-x\in X_+\;\text{ and }\;x\ne y\,;
\\  x\ll y \quad &\Longleftrightarrow \quad y-x\in \Int X_+\,.\qquad\quad\quad~
\end{split}
\end{equation}
In this situation, the skew-product semiflow $\tau$
is {\em monotone\/} if
\begin{equation*}
 u(t,p,x)\le u(t,p,y)\,\quad \text{for\, $t\ge 0$\,,\, $p\in P$ \,and\,
 $x,y\in X$ \,with\, $x\le y$}\,.
\end{equation*}
\par
A Borel map $a:P\to X$ such that $u(t,p,a(p))$ exists for any $t\geq 0$ is said to be
\begin{itemize}
\item[(a)] an {\em equilibrium\/} if $a(p{\cdot}t)=u(t,p,a(p))$ for any $p\in P$ and $t\geq 0$;
\item[(b)] a {\em sub-equilibrium\/} if $a(p{\cdot}t)\leq u(t,p,a(p))$ for any $p\in P$ and $t\geq 0$;
\item[(c)] a {\em super-equilibrium\/} if $a(p{\cdot}t)\geq u(t,p,a(p))$ for any $p\in P$ and $t\geq 0$.
\end{itemize}
A super-equilibrium (resp.~sub-equilibrium) $a:P\to X$ is {\em strong\/} if for some $t_*>0$, $a(p{\cdot}t_*)\gg u(t_*,p,a(p))$  (resp.~$\ll$) for every $p\in P$. The study of semicontinuity properties of these maps and other related issues can be found in Novo et al.~\cite{nono2}.
\section{Skew-product semiflow induced by scalar parabolic PDEs}\label{sec-mild solutions}\noindent
Let us consider a family of scalar parabolic PDEs over a minimal flow $(P,\theta,\R)$, with Neumann or Robin boundary
conditions
\begin{equation}\label{family}
\left\{\begin{array}{l} \des\frac{\partial y}{\partial t}  =
 \Delta \, y+f(p{\cdot}t,x,y)\,,\quad t>0\,,\;\,x\in U, \;\, \text{for each}\; p\in P,\\[.2cm]
By:=\alpha(x)\,y+\des\frac{\partial y}{\partial n} =0\,,\quad  t>0\,,\;\,x\in \partial U,\,
\end{array}\right.
\end{equation}
where $p{\cdot}t$ denotes the flow on $P$; $U$, the spatial domain, is a bounded, open and
connected  subset of $\R^m$ ($m\geq 1$) with a sufficiently smooth boundary
$\partial U$; $\Delta$ is the Laplacian operator on $\R^m$;
$f$ satisfies the following hypothesis:
\begin{itemize}
\item[(H)] $f\colon P\times \bar U\times \R\to\R$ is continuous and  is Lipschitz in $y$ in bounded sets, uniformly for $p\in P$ and $x\in\bar U$, that is, given any $R>0$ there exists an $L_R>0$ such that
\[
|f(p,x,y_2)-f(p,x,y_1)|\leq L_R\,|y_2-y_1|
\]
for any $p\in P$, $x\in\bar U$ and $y_1,\,y_2\in\R$ with $|y_1|,\,|y_2|\leq R$\,;
\end{itemize}
$\partial/\partial n$ denotes the
outward normal
derivative at the  boundary; and
$\alpha:\partial U\to \R$ is a nonnegative  sufficiently regular function.
\par
In order to immerse the initial boundary value  problem (IBV problem for short) associated with the parabolic problem~\eqref{family} into an abstract Cauchy problem (ACP for short), we consider the strongly ordered Banach space $X=C(\bar U)$ of the continuous functions on $\bar U$ with the sup-norm $\|\,{\cdot}\,\|$, and positive cone $X_+=\{z\in X\mid z(x)\geq 0 \;\forall x\in \bar U\}$ with nonempty interior $\Int X_+=\{z\in X\mid z(x)> 0 \;\forall x\in \bar U\}$,  which induces a (partial) strong ordering in $X$ as in~\eqref{order}. Note that $X$ is also a Banach algebra for the usual product $(z_1\,z_2)(x)=z_1(x)\,z_2(x)$ for $z_1,\,z_2\in X$ and  $x\in \bar U$.
\par
Now, following Smith~\cite{smit}, let  $A$ be  the closure of the differential operator
$A_0\colon  D(A_0)\subset X\to X$,  $A_0z=\Delta\,z$, defined on
\[
 D(A_0)=\{z \in C^2(U)\cap C^1(\bar U)\;\mid\;A_0z\in  C(\bar U),\;
 Bz=0 \,\hbox{ on }\partial U\}\,.
\]

The operator $A$ is sectorial and it generates
an analytic compact semigroup of operators $\{T(t)\}_{t\geq 0}$ on
$X$ which is strongly continuous (that is, $A$ is densely defined).
\par
If we define $\tilde f:P\times X\to X$, $(p,z)\mapsto \tilde f(p,z)$, $\tilde f(p,z)(x)=f(p,x,z(x))$,  $x\in \bar U$, the regularity conditions (H) on $f$ are transferred to  $\tilde f$. This leads to the continuity of  $\tilde f$, and Lipschitz continuity with respect to $z$ on any bounded set of $X$ with Lipschitz constant independent of $p$, that is, given any bounded set  $B\subset X$, there exists an $L_B>0$ such that
\[
\|\tilde f(p,z_2)-\tilde f(p,z_1)\|\leq L_B\,\|z_2-z_1\|\quad\text{for any}\,\; p\in P,\; z_1,\,z_2\in B\,.
\]

With the former conditions on $A$ and these conditions on $\tilde f$, when
we consider the ACP given
for each fixed $p\in P$ and $z\in X$ by
\begin{equation}\label{acpnl}
\left\{\begin{array}{l} u'(t)  =
 A\, u(t)+\tilde f(p{\cdot}t,u(t))\,,\quad t>0\,,\\
u(0)=z\,,
\end{array}\right.
\end{equation}
this  problem has a unique  {\it mild solution\/}, that is, there exists a unique continuous map $u(t)=u(t,p,z)$ defined on a maximal interval  $[0,\beta)$ for some $\beta=\beta(p,z)>0$ (possibly $\infty$) which satisfies the integral equation
\begin{equation*}
 u(t)=T(t)\,z +\int_0^t T(t-s)\,
 \tilde f(p{\cdot}s,u(s))\,ds\,,\quad t\in [0,\beta)\,.
\end{equation*}
(For instance, see Travis and Webb~\cite{trwe} or Hino et al.~\cite{hnms}.)
Mild solutions allow us to locally define a continuous
skew-product semiflow
\begin{equation*}
\begin{array}{cccl}
 \tau: &\mathcal{U} \subseteq\R_+\times P\times X& \longrightarrow & \hspace{0.3cm}P\times X\\
 & (t,p,z) & \mapsto
 &(p{\cdot}t,u(t,p,z))\,,
\end{array}
\end{equation*}
for an appropriate open set $\mathcal{U}$. Besides, if a solution $u(t,p,z)$ remains bounded, then it is defined on the whole positive real line and the semiorbit of $(p,z)$ is relatively compact (see Proposition~2.4 in~\cite{trwe}, where the compactness of the operators $T(t)$ for $t>0$ is crucial).
\par
Note that the linear family
\begin{equation}\label{pdefamily}
\left\{\begin{array}{l} \des\frac{\partial y}{\partial t}  =
 \Delta \, y+h(p{\cdot}t,x)\,y\,,\quad t>0\,,\;\,x\in U, \;\, \text{for each}\; p\in P,\\[.2cm]
By:=\alpha(x)\,y+\des\frac{\partial y}{\partial n} =0\,,\quad  t>0\,,\;\,x\in \partial U,\,
\end{array}\right.
\end{equation}
with $h:P\times\bar U\to\R$ a continuous map, is included in the general setting of \eqref{family}. In this case,  $\tilde h:P\to X$, $p\mapsto \tilde h(p)$, $\tilde h(p)(x)=h(p,x)$,  $x\in \bar U$ is continuous and bounded. In the associated linear ACP  given  for each $p\in P$ and $z\in X$ by
\begin{equation}\label{acplineal}
\left\{\begin{array}{l} v'(t)  =
 A\, v(t)+\tilde h(p{\cdot}t)\,v(t)\,,\quad t>0\,,\\
v(0)=z\,,
\end{array}\right.
\end{equation}
there appears the term $\tilde f(p,z)=\tilde h(p)\,z$, for $p\in P$, $z\in X$  which  is globally Lipschitz continuous with respect to $z$, uniformly for $p\in P$. This implies that the mild solutions  $v(t)=v(t,p,z)$, which in this linear case are solutions of  the integral equations
\begin{equation*}
 v(t)=T(t)\,z +\int_0^t T(t-s)\,
 \tilde h(p{\cdot}s)\,v(s)\,ds\,,\quad t\geq 0\,,
\end{equation*}
allow us to define a globally defined  continuous linear
skew-product semiflow
\begin{equation*}
\begin{array}{cccl}
 \tau_L: & \R_+\times P\times X& \longrightarrow & \hspace{0.3cm}P\times X\\
 & (t,p,z) & \mapsto
 &(p{\cdot}t,\phi(t,p)\,z)\,,
\end{array}
\end{equation*}
where $\phi(t,p)\,z=v(t,p,z)$. In particular $\phi(t,p)$ are bounded operators on $X$ which are compact for $t>0$  and satisfy the linear semicocycle property $\phi(t+s,p)=\phi(t,p{\cdot}s)\,\phi(s,p)$, $p\in P$, $t,s\geq 0$. As before, bounded trajectories are relatively compact.
\par
Under additional regularity conditions in the nonlinear term $f(p{\cdot}t,x,y)$, such as a Lipschitz condition with respect to $t$ and H\"{o}lder-continuity  with respect to $x$, mild solutions are known to generate classical solutions; namely, $y(t,x)=u(t,p,z)(x)$, $t\in [0,\beta(p,z))$, $x\in \bar U$ is a classical solution of the IBV problem given by~\eqref{family}  for $p\in P$ with initial condition at time $t=0$, $y(0,x)=z(x)$, $x\in \bar U$,  meaning that  the corresponding partial derivatives exist, are continuous
and satisfy the corresponding equation in~\eqref{family} as well as the boundary conditions (see Smith~\cite{smit} and
Friedman~\cite{frie}).
\par
We next state a result of comparison of solutions which will be used through the paper, and the strong monotonicity of the semiflow.
\begin{teor}\label{teor-comparacion}
Let $f_1$ and $f_2$ satisfy hypothesis $\rm{(H)}$ and be such that $f_1\leq f_2$. For each $p\in P$ and $z\in X$, denote by $u_1(t,p,z)$ and $u_2(t,p,z)$ the mild solutions of the associated ACPs~\eqref{acpnl}, respectively. Then, $u_1(t,p,z)\leq u_2(t,p,z)$ for any $t\geq 0$ where both solutions are defined.
\end{teor}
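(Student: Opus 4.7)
The plan is to reduce the comparison to a monotone Picard iteration performed with a positivity-preserving semigroup. Fix $T>0$ in the common maximal interval of definition of $u_1(\cdot,p,z)$ and $u_2(\cdot,p,z)$, and choose $R>0$ with $\n{u_i(t)}\le R$ for all $t\in[0,T]$ and $i=1,2$. By hypothesis (H), $\tilde f_1$ and $\tilde f_2$ share a common Lipschitz constant $L:=L_R$ on the closed ball $\{z\in X:\n{z}\le R\}$. The decisive ingredient is that the analytic semigroup $\{T(t)\}_{t\ge 0}$ generated by the realization $A$ of the Laplacian under the boundary operator $B$ is positivity-preserving on $X$, i.e.\ $T(t)\,X_+\subset X_+$ for every $t\ge 0$; this is the parabolic maximum principle in the form used in Smith~\cite{smit}.

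Monotonize the nonlinearities by setting $\tilde g_i(p,z):=\tilde f_i(p,z)+Lz$, the product being taken in the Banach algebra $X$. Hypothesis (H) applied pointwise yields $f_i(p,x,z_2(x))-f_i(p,x,z_1(x))\ge -L(z_2(x)-z_1(x))$ for each $x\in\bar U$ whenever $z_1\le z_2$ in $X$ and $\n{z_1},\n{z_2}\le R$, so $\tilde g_i(p,\cdot)$ is nondecreasing on this ball for the order of $X$, while $\tilde g_1\le\tilde g_2$ on $P\times X$ by $f_1\le f_2$. Rewriting~\eqref{acpnl} in the equivalent form $u_i'=(A-L\,\id)u_i+\tilde g_i(p{\cdot}t,u_i)$ and observing that the shifted semigroup $S(t):=e^{-Lt}T(t)$ generated by $A-L\,\id$ remains positivity-preserving, $u_i$ is characterized on $C([0,T],X)$ as a fixed point of the Picard operator
\[
(\mathcal{F}_i w)(t)=S(t)\,z+\int_0^t S(t-s)\,\tilde g_i(p{\cdot}s,w(s))\,ds.
\]
To run a single iteration on all of $[0,T]$, replace the nonlinear terms by cutoffs $\hat f_i$ that coincide with $\tilde f_i$ on $\{\n{z}\le 2R\}$ and are globally Lipschitz with a common constant $\hat L\ge L$; the corresponding $\hat g_i$ is then globally nondecreasing in $z$ with $\hat g_1\le\hat g_2$, and the modified ACPs admit globally defined mild solutions $\hat u_i$ that coincide with $u_i$ on $[0,T]$.

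Starting the iteration from the common initial iterate $\hat u_i^{(0)}(t):=S(t)\,z$ and setting $\hat u_i^{(n+1)}:=\hat{\mathcal{F}}_i\hat u_i^{(n)}$, induction on $n$ yields $\hat u_1^{(n)}(t)\le\hat u_2^{(n)}(t)$ in $X$ for every $t\in[0,T]$ and every $n\ge 0$: at each step, positivity of $S(t-s)$ and monotonicity of $\hat g_i(p,\cdot)$ propagate the order within each family, while $\hat g_1\le\hat g_2$ bridges the two families. The standard estimate
\[
\n{\hat u_i^{(n+1)}(t)-\hat u_i^{(n)}(t)}\le \frac{(\hat L t)^n}{n!}\,\sup_{s\in[0,T]}\n{\hat u_i^{(1)}(s)-\hat u_i^{(0)}(s)}
\]
yields uniform convergence $\hat u_i^{(n)}\to\hat u_i$ on $[0,T]$, so passing to the limit gives $\hat u_1(t)\le\hat u_2(t)$ and hence $u_1(t,p,z)\le u_2(t,p,z)$ for $t\in[0,T]$. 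Since $T$ was arbitrary in the common interval of definition, the theorem follows. The main subtlety lies precisely in this passage from local to global comparison: the monotonizing shift by $L\,\id$ together with the truncation convert a problem with merely locally Lipschitz, non-monotone nonlinearity into one to which a single order-preserving Picard iteration applies on the whole of $[0,T]$.
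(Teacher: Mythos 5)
Your proof is correct, but it takes a genuinely different route from the paper's. The paper reduces to the classical parabolic comparison theorem: it uses Tietze's extension theorem together with Gauss-kernel mollification to replace $f_1,f_2$ by smooth functions $F_{1,n}\le F_1\le F_2\le F_{2,n}$, so that mild solutions become classical solutions to which the classical comparison principle applies, and then passes to the limit using a convergence result for mild solutions under uniform perturbation of the coefficient (Proposition~3.2 of Novo et al.~\cite{nonuobsa}, in the spirit of Proposition~2.4 of Travis--Webb~\cite{trwe}). You instead stay entirely within the abstract semigroup framework: you monotonize the nonlinearity by the shift $\tilde g_i(p,z)=\tilde f_i(p,z)+Lz$, pass to the positivity-preserving shifted semigroup $S(t)=e^{-Lt}T(t)$ generated by $A-L\,\id$, truncate $\tilde f_i$ outside a ball to obtain globally Lipschitz and order-monotone right-hand sides with the same local solutions, and then observe that the Picard iterates for the two problems, started from the common initial iterate $S(\cdot)z$, remain ordered at every step by positivity of $S$ and monotonicity of $\hat g_1\le\hat g_2$; passing to the uniform limit gives the comparison. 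The trade-off is instructive: the paper's argument offloads the order-theoretic content to the classical maximum principle for smooth parabolic equations, at the price of a regularization and a mild-solution convergence lemma; your argument makes the order-theoretic content explicit as positivity of the heat semigroup (itself a maximum-principle fact, but one already available from Smith~\cite{smit}), and produces the comparison directly by iteration without ever invoking classical solutions. One small remark: the phrase ``the product being taken in the Banach algebra $X$'' when defining $\tilde g_i(p,z)=\tilde f_i(p,z)+Lz$ is misplaced, since $Lz$ is simply a scalar multiple of $z$ and no algebra product is involved; this does not affect the argument.
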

\begin{proof}
Let us fix a $p\in P$ and a $z\in X$ and let $t_0>0$ be such that both $u_1(t_0,p,z)$ and $u_2(t_0,p,z)$ exist. The idea is to approximate the equations given by $f_1$ and $f_2$ by a sequence of equations to which the standard comparison of solutions result applies, and whose solutions approximate the mild solutions of the initial problems. \par
Let $R=\sup\{\|u_1(t,p,z)\|,\,\|u_2(t,p,z)\|\mid t\in [0,t_0]\}<\infty$. First, we apply Tietze's extension theorem to the continuous  map
\[
g_i:[0,t_0]\times\bar U\times [-R,R]\to \R\,,\quad  (t,x,y)\mapsto f_i(p{\cdot}t,x,y)
\]
for $i=1,2$. Thus there exist continuous maps $F_i:\R\times\R^m\times \R\to \R$ ($i=1,2$) with compact support such that the restriction  $F_i |_{[0,t_0]\times\bar U\times [-R,R]}\equiv g_i$ and $\|F_i\| = \|g_i\|_{[0,t_0]\times\bar U\times [-R,R]}$. Now, for $i=1,2$, we apply to $F_i$ the regularization process used in the construction of solutions of the heat equation, using the convolution with the so-called {\em Gauss kernel\/}; namely, the maps defined on $\R\times \R^m\times \R$,
\[
F_{i,n}(t,x,y)=\left(\frac{n}{4\pi}\right)^{\!\frac{m+2}{2}}\int_{\R\times\R^m\times\R} e^{\frac{-n\,\|(t,x,y)-(\wit t,\wit x,\wit y)\|^2}{4}}F_i(\wit t,\wit x,\wit y)\,d\wit t\,d\wit x\,d\wit y\,,\quad \;n\geq 1
\]
satisfy:
\begin{itemize}
  \item[(i)] $F_{i,n}(t,x,y)$ is of class $C^\infty$ with respect to $t$, $x$ and $y$;
  \item[(ii)] $\des\lim_{n\to\infty} F_{i,n}(t,x,y)=F_i(t,x,y)$ uniformly;
  \item[(iii)] $|F_{i,n}(t,x,y)|\leq \|F_i\|$ for any $(t,x,y)\in \R\times\R^m\times\R$.
\end{itemize}
\par
At this point, for $i=1,2$ and for each $n\geq 1$ we denote by $u_{i,n}(t,p,z)$ the mild solution of the  ACP for $p$ and $z$ given by
\begin{equation*}
\left\{\begin{array}{l} u'(t)  =
 A\, u(t)+\widetilde F_{i,n}(t,u(t))\,,\quad t>0\,,\\
u(0)=z\,,
\end{array}\right.
\end{equation*}
where $\widetilde F_{i,n}(t,u)(x)=F_{i,n}(t,x,u(x))$,  $x\in \bar U$. Thanks to (iii),  $u_{i,n}(t,p,z)$ is defined on $[0,t_0]$ for $n\geq 1$, and we affirm that $u_{i,n}(t,p,z)\to u_i(t,p,z)$ uniformly for $t\in [0,t_0]$ as $n\to\infty$. To see it,  follow the argumentation in the proof of Proposition~3.2 in Novo et al.~\cite{nonuobsa} (inspired in the proof of Proposition~2.4 in Travis and Webb~\cite{trwe}).
\par
To finish, note that we can also assume that $F_{1,n}\leq F_1 \leq F_2\leq F_{2,n}$ for any $n\geq 1$ and besides, with the regularity conditions we have on $F_{i,n}$, the mild solutions of the ACPs give rise to classical solutions of the associated IBV problems. Therefore,  the standard result of comparison of solutions says that $u_{1,n}(t,p,z)\leq u_{2,n}(t,p,z)$ for $t\in [0,t_0]$ for every $n\geq 1$. Therefore, taking limits, we finally obtain that $u_1(t,p,z)\leq u_2(t,p,z)$ for $t\in [0,t_0]$, as desired.
\end{proof}
\begin{prop}\label{prop-strong monot lineal}
Consider the linear problem \eqref{pdefamily} with $h:P\times\bar U\to\R$ continuous. Then, the induced linear skew-product semiflow $\tau_L$ is strongly monotone, that is, for any $p\in P$, $\phi(t,p)\,z\gg 0$  whenever  $z>0$, for any $t>0$.
\end{prop}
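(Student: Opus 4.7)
The plan is to combine an exponential rescaling that removes the sign of $h$ with the strong positivity of the heat semigroup, and then to propagate positivity through the variation-of-constants formula.

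First, I would reduce to the case $h\geq 0$. Set $L=\|h\|_\infty$ and $w(t)=e^{Lt}\phi(t,p)z$. A direct computation shows that $w$ is the mild solution of $w'(t)=A\,w(t)+(\tilde h(p{\cdot}t)+L)\,w(t)$ with $w(0)=z$, and the new coefficient $h+L$ is nonnegative on $P\times\bar U$. Since $e^{Lt}>0$, the relation $\phi(t,p)z\gg 0$ is equivalent to $w(t)\gg 0$, so I may assume $h\geq 0$.

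Next I would invoke the well-known strong positivity of the analytic semigroup $\{T(t)\}_{t\geq 0}$ generated by $A$ on $X=C(\bar U)$: for the Laplacian with Neumann or Robin boundary conditions (with $\alpha\geq 0$ sufficiently regular) one has $T(t)X_+\subset X_+$ for every $t\geq 0$ and $T(t)z\gg 0$ whenever $z>0$ and $t>0$. This is a classical consequence of the strong maximum principle together with Hopf's boundary point lemma; see, e.g., Smith~\cite{smit}. Setting $v(t)=\phi(t,p)z$, the Picard iteration $v_0(t)=T(t)z$, $v_{n+1}(t)=T(t)z+\int_0^t T(t-s)\,\tilde h(p{\cdot}s)\,v_n(s)\,ds$ converges uniformly on compact intervals to $v$. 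Since $T(t-s)$ preserves $X_+$, $\tilde h(p{\cdot}s)\geq 0$, and $X$ is a Banach algebra, a straightforward induction shows that for $z\geq 0$ each $v_n(t)\geq 0$, so $v(t)\geq 0$. Now assume $z>0$; then the integrand $T(t-s)\,\tilde h(p{\cdot}s)\,v(s)$ lies in $X_+$ for every $0\leq s\leq t$, so the integral equation yields $v(t)\geq T(t)z$, and by the previous sentence $T(t)z\gg 0$ for $t>0$. Hence $v(t)\gg 0$, which is the strong monotonicity claimed.

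The main obstacle is really the strong positivity of $T(t)$ itself; once this classical fact is granted, the rest of the proof is a direct bootstrap from the variation-of-constants formula. A minor technical point is justifying the inequality $v(t)\geq T(t)z$ from pointwise positivity of the Bochner integrand, but this follows immediately from the continuity of $s\mapsto T(t-s)\,\tilde h(p{\cdot}s)\,v(s)$ and the closedness of the cone $X_+$ in $X$.
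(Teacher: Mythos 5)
Your proof is correct, but it takes a genuinely different route from the paper's. The paper's proof is a one-liner: pick a regular $h_1\leq h$, note that for regular coefficients strong monotonicity of the induced semiflow is classical, and then invoke the comparison theorem (Theorem~\ref{teor-comparacion}) to transfer $\phi_1(t,p)z\gg 0$ to $\phi(t,p)z\gg 0$. Your argument instead bypasses comparison entirely: you shift $A\mapsto A+L$ (equivalently conjugate by $e^{Lt}$) to make the potential nonnegative, then use the strong positivity of the bare semigroup $T(t)$ together with positivity-preservation through the Picard iteration and the variation-of-constants formula. Both proofs ultimately rest on the same classical input — the strong maximum principle for the Laplacian with Neumann/Robin conditions — but yours isolates it more cleanly: you only need $T(t)z\gg 0$ for $z>0$, $t>0$, whereas the paper's proof leans on strong monotonicity of the full non-autonomous problem for smooth $h_1$ (a somewhat heavier classical fact) and on Theorem~\ref{teor-comparacion}, whose own proof required a Tietze extension and Gauss-kernel regularization. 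Your route is therefore more self-contained and makes the logical dependencies more transparent; the paper's is shorter on the page because the comparison theorem is already available. One small stylistic slip: after announcing the reduction "I may assume $h\geq 0$," you should either rename the coefficient or keep writing $\tilde h+L$; mid-paragraph you revert to writing $\tilde h(p{\cdot}s)$ as if it were already nonnegative, which is what you intended but could confuse a reader. The concluding observation — that $v(t)\geq T(t)z$ follows from nonnegativity of the integrand via continuity and closedness of $X_+$ — is exactly the right justification for the Bochner-integral inequality.
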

\begin{proof}
We just consider a regular map $h_1:P\times\bar U\to\R$ with $h_1\leq h$. The  linear skew-product semiflow associated to the regular map $h_1$ is well-known to be strongly monotone, so that the result follows by comparison applying Theorem~\ref{teor-comparacion}.
\end{proof}
To finish this section, in the nonlinear case we deduce the strong monotonicity of the induced skew-product semiflow $\tau$ by linearizing, provided that the nonlinear term is of class $C^1$ in the $y$ variable.
\begin{prop}\label{prop-strong monot}
Consider the nonlinear problem \eqref{family} with $f:P\times\bar U\times \R\to\R$ continuous and  of class $C^1$ in the $y$ variable. Then, the induced skew-product semiflow   is  strongly monotone, that is, for any $p\in P$, $u(t,p,z_2)\gg u(t,p,z_1)$  whenever  $z_2>z_1$, for any $t>0$ where both terms are defined.
\end{prop}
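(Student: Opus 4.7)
The plan is to linearize along the two trajectories, showing the difference $w(t):=u(t,p,z_2)-u(t,p,z_1)$ solves a linear parabolic equation to which the strong monotonicity of Proposition~\ref{prop-strong monot lineal} can be applied. Fix $p\in P$, $z_1,z_2\in X$ with $z_1<z_2$, and $t_0>0$ within the common interval of existence of $u(\cdot,p,z_1)$ and $u(\cdot,p,z_2)$. Since $\partial f/\partial y$ is continuous, the pointwise mean value theorem in the $y$ variable yields
\[
f(p\cdot t,x,u(t,p,z_2)(x))-f(p\cdot t,x,u(t,p,z_1)(x))=h(t,x)\,w(t)(x),
\]
where
\[
h(t,x):=\int_0^1 \frac{\partial f}{\partial y}\bigl(p\cdot t,\,x,\,u(t,p,z_1)(x)+s\,w(t)(x)\bigr)\,ds,
\]
and $h$ is continuous and bounded on $[0,t_0]\times\bar U$. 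Subtracting the mild-solution identities for $u(t,p,z_2)$ and $u(t,p,z_1)$, one finds that $w$ is the mild solution on $[0,t_0]$ of the linear non-autonomous Cauchy problem $w'(t)=A\,w(t)+\tilde h(t)\,w(t)$, $w(0)=z_2-z_1>0$, where $\tilde h(t)(x):=h(t,x)$.

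Next I would cast this linear problem in the hull framework of Section~\ref{sec-linear}. Extend $h$ continuously and boundedly to $\R\times\bar U$, form the hull $\tilde P$ of $h$ in $C(\R\times\bar U)$ with the compact-open topology under time translation, and let $\tilde p_0\in\tilde P$ be the base point corresponding to $h$. The flow $(\tilde P,\theta,\R)$ is a continuous compact (not necessarily minimal) flow, and $w(\cdot)$ coincides with the second component of the linear skew-product semiflow on $\tilde P\times X$ evaluated at $(\tilde p_0,z_2-z_1)$. The proof of Proposition~\ref{prop-strong monot lineal} invokes only Theorem~\ref{teor-comparacion} and the strong maximum principle for the smoothed equation, neither of which uses minimality of the base, so the proposition extends and yields $w(t)\gg 0$ for every $t\in(0,t_0]$; that is, $u(t,p,z_2)\gg u(t,p,z_1)$, as claimed.

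The principal technical worry is precisely this tacit extension of Proposition~\ref{prop-strong monot lineal} beyond minimal bases. If one prefers to bypass the hull entirely, essentially the same conclusion follows by a direct comparison: with $M:=\sup_{[0,t_0]\times\bar U}|h|$, set $\bar w(t):=e^{Mt}w(t)$, so that $\bar w$ solves $\bar w'=A\bar w+(h+M)\bar w$ with nonnegative potential $h+M\ge 0$ and initial datum $z_2-z_1>0$. A standard Picard-iteration argument on the mild formulation, together with the strong positivity of the Neumann/Robin heat semigroup $T(t)$ for $t>0$, gives $\bar w(t)\ge T(t)(z_2-z_1)\gg 0$ on $(0,t_0]$, whence $w(t)\gg 0$.
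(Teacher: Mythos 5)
Your proposal is correct, and your second (\emph{direct comparison}) argument is a genuinely different and more elementary route than the paper's. The paper instead establishes Fr\'{e}chet differentiability of the semiflow, $D_z u(t,p,z)\,z_0=v(t,p,z,z_0)$ via the variational abstract Cauchy problem along the trajectory of $(p,z)$ (citing Theorem~3.5 of~\cite{nonuobsa}), and then writes $u(t,p,z_2)-u(t,p,z_1)=\int_0^1 D_z u(t,p,\lambda z_2+(1-\lambda)z_1)(z_2-z_1)\,d\lambda$, getting nonnegativity of the integrand from Proposition~\ref{prop-strong monot lineal} and strong positivity at $\lambda=0$. You bypass Fr\'{e}chet differentiability altogether: the pointwise mean-value representation shows the difference $w$ solves a linear mild problem with a continuous bounded time-dependent coefficient, and your exponential rescaling $\bar w=e^{Mt}w$ (making the potential nonnegative) plus a monotone Picard iteration gives $\bar w(t)\ge T(t)(z_2-z_1)\gg 0$ directly from the strong positivity of the Neumann/Robin heat semigroup. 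This is cleaner and more self-contained; what the paper's route buys is the $C^1$-smoothness and continuity of $D_z u$ as such, which may be of independent use. Your first (\emph{hull}) variant is correct in spirit but carries exactly the overhead you yourself flag --- Proposition~\ref{prop-strong monot lineal} is stated for a fixed base flow $(P,\theta,\R)$ and one must argue that neither its proof nor Theorem~\ref{teor-comparacion} uses minimality, and one must also arrange the continuous bounded extension of $h(t,x)$ before forming the hull; all of this is avoidable, so the direct comparison is preferable. (It is worth noting that the paper's own invocation of Proposition~\ref{prop-strong monot lineal} for the variational ACP along a trajectory involves a comparable implicit extension, since the coefficient $\frac{\widetilde{\partial f}}{\partial y}(\tau(t,p,z))$ is a fixed time-dependent function rather than a function on $P\times\bar U$; your explicit treatment is, if anything, more careful on this point.)
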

\begin{proof}
First of all,  we define the map $\frac{\widetilde {\partial f}}{\partial y}:P\times X \to X$, $(p,z)\mapsto \frac{\widetilde {\partial f}}{\partial y}(p,z)$, $\frac{\widetilde {\partial f}}{\partial y}(p,z)(x)=\frac{\partial f}{\partial y}(p,x,z(x))$, $x\in\bar U$, which is continuous. Then, given a pair $(p,z)\in P\times X$, we consider the associated variational ACP along the trajectory of $(p,z)$ with initial value $z_0\in X$, for $t>0$ as long as $\tau(t,p,z)$ exists:
\begin{equation*}
\left\{\begin{array}{l} v'(t)  =
 A\, v(t)+\des\frac{\widetilde {\partial f}}{\partial y}(\tau(t,p,z))\,v(t)\,,\\
v(0)=z_0\,.
\end{array}\right.
\end{equation*}
Denoting by $v(t,p,z,z_0)$ the mild solution to this problem, we follow the argumentation in the proof of Theorem~3.5 in Novo et al.~\cite{nonuobsa} to see that $D_zu(t,p,z)\,z_0$ exists and $D_zu(t,p,z)\,z_0=v(t,p,z,z_0)$. Besides, the map $P\times X\to \mathcal{L}(X)$, $(p,z)\mapsto D_zu(t,p,z)$ is continuous for any $t>0$ in the interval of definition of $u(t,p,z)$.
\par
To finish the proof, given a $T>0$ such that $u(t,p,z_1)$ and $u(t,p,z_2)$ are defined on $[0,T]$, we can assume without loss of generality that also
$u(t,p,\lambda\,z_2+(1-\lambda)\,z_1)$ is defined on $[0,T]$ for any $\lambda\in(0,1)$, and then just write for $z_1< z_2$,
\begin{equation*}
 u(t,p,z_2)-u(t,p,z_1)=\int_0^1 D_z u(t,p,\lambda\,z_2+(1-\lambda)\,z_1)(z_2-z_1)\,d\lambda\,.
\end{equation*}
\par
By applying Proposition~\ref{prop-strong monot lineal} to the mild solutions of the variational linear ACPs, we get the nonnegativity of the integrand. Since $z_1<z_2$,  at $\lambda=0$ (for instance) apply the strong monotonicity, so that $D_z u(t,p,z_1)(z_2-z_1)\gg 0$, and this, together with the continuity of the integrand, is enough to conclude the proof.
\end{proof}
\section{Scalar linear parabolic PDEs with null upper Lyapunov exponent}\label{sec-linear}\noindent
In this section we concentrate on the linear case. Let us consider a family \eqref{pdefamily} of scalar linear parabolic PDEs over a minimal flow $(P,\theta,\R)$, with Neumann or Robin boundary
conditions:
\begin{equation*}
\left\{\begin{array}{l} \des\frac{\partial y}{\partial t}  =
 \Delta \, y+h(p{\cdot}t,x)\,y\,,\quad t>0\,,\;\,x\in U, \;\, \text{for each}\; p\in P,\\[.2cm]
By:=\alpha(x)\,y+\des\frac{\partial y}{\partial n} =0\,,\quad  t>0\,,\;\,x\in \partial U,\,
\end{array}\right.
\end{equation*}
with $h\in C(P\times \bar U)$, the Banach space of the continuous real maps defined on $P\times \bar U$. We keep  the notation introduced in the previous section; in particular, $\tau_L$ is the globally defined linear skew-product semiflow given by the mild solutions of the associated ACPs, determined by the compact (for $t>0$) linear operators $\phi(t,p)\in \mathcal{L}(X)$. Recall also that $\tau_L$ is strongly monotone: see Proposition~\ref{prop-strong monot lineal}.
\par
The {\em Sacker-Sell spectrum\/} (or  {\em continuous spectrum\/}; see Sacker and Sell~\cite{sase94}) of $\tau_L$ is the set
\[
\Sigma=\{\lambda\in \R\mid \tau_L^\lambda \text{ has no exponential dichotomy} \},
\]
where $\tau_L^\lambda$ denotes the linear skew-product semiflow  $\tau_L^\lambda(t,p,z)=(p{\cdot}t,e^{-\lambda t}\phi(t,p)\,z)$  on $P\times X$. The {\em upper Lyapunov exponent\/} of $\tau_L$ is defined as $\lambda_P=\sup_{p\in P} \lambda(p)$, where $\lambda(p)$ is the  Lyapunov exponent given by
\begin{equation}\label{lyap exp}
\lambda(p)=\limsup_{t\to \infty} \frac{\ln\|\phi(t,p)\|}{t}=\limsup_{t\to \infty} \frac{\ln\|\phi(t,p)\,z\|}{t}\,
\end{equation}
for any $z\gg 0$; since for a given $z\gg 0$ there exists an $l=l(z)>0$ such that
$\|\phi(t,p)\|\leq l\,\|\phi(t,p)\,z\|$ for any $t>0$ and
$p\in P$.
It is well-known that $\lambda_P=\sup \Sigma <\infty$ (see Shen and Yi~\cite{shyi} and Chow and Leiva~\cite{chle}
for further details).
\par
To emphasize the dependance of $\lambda_P$ on the coefficient $h$, we will write $\lambda_P(h)$. In particular, for $h=0$, the problem is autonomous and the solution semiflow is given by the semigroup  $\{T(t)\}_{t\geq 0}$. Since $\|T(t)\|\leq 1$ for any $t\geq 0$ (see Smith~\cite{smit}), it follows that $\lambda_P(0)\leq 0$. One can arrive at this same conclusion by considering the strongly positive solution of problem~\eqref{pdefamily} with $h=0$ given by $y(t,x)=e^{-\gamma_0 t}e_0(x)$, where $\gamma_0\geq 0$ is the  first eigenvalue and $e_0\in X$, with $e_0\gg 0$ and $\|e_0\|=1$ is the associated eigenfunction, of the  boundary value  problem
\begin{equation}\label{bvp}
\left\{\begin{array}{l}
 \Delta \, u +\lambda\,u = 0\,,\quad x\in U,\\
Bu:=\alpha(x)\,u+\des\frac{\partial u}{\partial n} =0\,,\quad x\in \partial U.
\end{array}\right.
\end{equation}
More precisely, it turns out that $\lambda_P(0)=-\gamma_0\leq 0$.
\par
As proved  by Pol\'{a}\v{c}ik and Tere\v{s}\v{c}\'{a}k~\cite{pote} in the discrete case, and then extended by Shen and Yi~\cite{shyi} to the continuous case, the operators $\phi(t,p)$ being compact and strongly positive make the  linear skew-product semiflow $\tau_L$ admit a continuous separation. This means that there are two families of subspaces $\{X_1(p)\}_{p\in P}$ and $\{X_2(p)\}_{p\in P}$ of $X$ which satisfy:
\begin{itemize}
\item[(1)] $X=X_1(p)\oplus X_2(p)$  and $X_1(p)$, $X_2(p)$ vary
    continuously in $P$;
 \item[(2)] $X_1(p)=\langle e(p)\rangle$, with $e(p)\gg 0$ and
     $\|e(p)\|=1$ for any $p\in P$;
\item[(3)] $X_2(p)\cap X_+=\{0\}$ for any $p\in P$;
\item[(4)] for any $t>0$,  $p\in P$,
\begin{align*}
\phi(t,p)X_1(p)&= X_1(p{\cdot}t)\,,\\
\phi(t,p)X_2(p)&\subset X_2(p{\cdot}t)\,;
\end{align*}
\item[(5)] there are $M>0$, $\delta>0$ such that for any $p\in P$, $z\in
    X_2(p)$ with $\|z\|=1$ and $t>0$,
\begin{equation*}
\|\phi(t,p)\,z\|\leq M \,e^{-\delta t}\|\phi(t,p)\,e(p)\|\,.
\end{equation*}
\end{itemize}
\par
In this situation, the 1-dim invariant subbundle
\begin{equation*}
\displaystyle\bigcup_{p\in P} \{p\} \times X_1(p)\,
\end{equation*}
is called the {\em principal bundle\/} and the Sacker-Sell spectrum of the restriction of $\tau_L$ to this invariant subbundle is called the {\em principal spectrum\/} of $\tau_L$, and is denoted by  $\Sigma_{\text{pr}}(\tau_L)$ (see Mierczy{\'n}ski and Shen~\cite{mish}). It is well-known that $\Sigma_{\text{pr}}(\tau_L)$ is a possibly degenerate compact interval of the real line. Actually, if $c(t,p)$ is the real linear semicocycle associated with the continuous separation, that is, if for any $t\geq 0$ and $p\in P$, $c(t,p)$ is the positive number such that
\begin{equation}\label{c}
\phi(t,p)\,e(p)=c(t,p)\,e(p{\cdot}t)\,,
\end{equation}
then the Lyapunov exponents~\eqref{lyap exp} can be calculated by $\lambda(p)=\des\limsup_{t\to\infty}\frac{\ln c(t,p)}{t}$ for each $p\in P$ and besides $\Sigma_{\text{pr}}(L)=[\alpha_P,\lambda_P]$ with $\alpha_P\leq\lambda_P$, and
there are two ergodic measures $\mu_1$, $\mu_2\in\mathcal{M}(P)$ such that
\begin{equation}\label{rep int exponente}
\alpha_{P}=\int_P \ln c(1,p)\,d\mu_1\,\quad \text{and}\quad
\lambda_{P}=\int_P \ln c(1,p)\,d\mu_2\,.
\end{equation}
The reader is referred to Novo et al.~\cite{noos7} for all the details in an abstract setting.
\par
As a consequence, when the flow on $P$ is uniquely ergodic, the principal spectrum is a singleton determined by the upper Lyapunov exponent: $\Sigma_{\text{pr}}(\tau_L)=\{\lambda_P\}$. Furthermore, in the uniquely ergodic setting an application of Birkhoff's ergodic theorem  permits to conclude that $\lambda_P=\lambda(p)$ for any $p\in P$ and besides, the superior limit in the definition of $\lambda(p)$ is an existing limit.
\par
Note that the linear semicocycle $c(t,p)$ can be extended to a linear cocycle just by taking $c(-t,p)=1/c(t,p{\cdot}(-t))$ for any $t>0$ and $p\in P$. Since this 1-dim linear cocycle is going to be a fundamental tool in this section, we give a definition.
\begin{defi}\label{defi-cociclo c}
For each $h\in C(P\times\bar U)$,  $c(t,p)$ ($t\in\R$, $p\in P$) is the 1-dim linear cocycle driving the dynamics of $\tau_L$ when restricted to the principal bundle determined by the continuous separation (see~\eqref{c}).
\end{defi}
The kind of results that we are going to present in the linear case are in line with those in Caraballo et al.~\cite{caloNonl} given for  families of scalar linear ODEs $x'=h(p{\cdot}t)\,x$, $p\in P$, with $P$ a minimal and almost periodic flow and with null upper Lyapunov exponent $\lambda_P=\lambda_P(h)=0$. A significant difference is that in the case of  scalar ODEs $\lambda_P(h)$ is a linear map with respect to $h$,
\[
\lambda_P(h)=\int_P h\,d\nu\,,
\]
for the Haar measure $\nu$ in $P$, whereas in the present case of scalar parabolic PDEs we  show that the dependance of the upper Lyapunov exponent on $h$ is continuous and convex but not linear any more: just note that $\lambda_P(0)=-\gamma_0<0$ for Robin boundary conditions (also, see Theorem~\ref{prop-convex estricta}).
\par
From now on, we assume that the flow on $P$ is minimal and uniquely ergodic and  $\nu$ denotes the unique ergodic measure.
\begin{prop}\label{prop-continua}
The map $\lambda_P:C(P\times\bar U)\to \R$, $h\mapsto \lambda_P(h)$ is continuous. As a consequence,
\begin{equation*}
C_0(P\times\bar U)=\{h\in C(P\times\bar U)\mid \lambda_P(h)=0\}
\end{equation*}
is a closed complete set in $C(P\times\bar U)$.
\end{prop}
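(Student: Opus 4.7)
The plan is to show that $\lambda_P:C(P\times \bar U)\to\R$ is Lipschitz with constant one,
\[
|\lambda_P(h_1)-\lambda_P(h_2)| \leq \|h_1-h_2\|_\infty\quad\text{for all }h_1,h_2\in C(P\times \bar U),
\]
from which continuity is immediate. The closedness (and hence completeness) of $C_0(P\times\bar U)=\lambda_P^{-1}(\{0\})$ then follows because it is the preimage of a closed set by a continuous map, and a closed subset of the Banach space $C(P\times \bar U)$ is complete in the induced metric.

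To establish the Lipschitz bound I would combine two ingredients. Writing $\phi_h(t,p)$ for the linear operator produced by the mild solution of~\eqref{pdefamily} with coefficient $h$, the first ingredient is the \emph{exponential scaling identity}: for any constant $c\in\R$ and any $h\in C(P\times \bar U)$,
\[
\phi_{h+c}(t,p)=e^{ct}\,\phi_h(t,p),\qquad t\geq 0,\ p\in P.
\]
This comes from substituting $w(t)=e^{-ct}v(t)$ into the mild integral equation for the coefficient $h+c$ and checking that $w$ solves the mild equation for $h$ with the same initial value. The second is the \emph{monotonicity of $\phi_h$ in $h$}: if $h_1\leq h_2$ then $\phi_{h_1}(t,p)\,z \leq \phi_{h_2}(t,p)\,z$ for every $z\in X_+$. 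Since the ``nonlinearity'' $h(p,x)\,y$ is not order-preserving in $y$ on $\R$, Theorem~\ref{teor-comparacion} does not apply verbatim; I would circumvent this by working with the truncated nonlinearities $f_i(p,x,y)=h_i(p,x)\,y^+$, which coincide with $h_i(p,x)\,y$ on trajectories starting in $X_+$ (by Proposition~\ref{prop-strong monot lineal}, which yields $\phi_h(t,p)\,z\gg 0$ whenever $z>0$ and $t>0$) and globally satisfy $f_1\leq f_2$, so Theorem~\ref{teor-comparacion} applies and produces the desired comparison.

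Combining the two ingredients, if $\epsilon=\|h_1-h_2\|_\infty$ then $h_1-\epsilon\leq h_2\leq h_1+\epsilon$, and for every $z\gg 0$ and every $p\in P$
\[
e^{-\epsilon t}\,\phi_{h_1}(t,p)\,z\;\leq\; \phi_{h_2}(t,p)\,z\;\leq\; e^{\epsilon t}\,\phi_{h_1}(t,p)\,z.
\]
All three terms lie in $\Int X_+$ for $t>0$, and the sup-norm on $C(\bar U)$ is order-preserving on $X_+$. Taking norms, logarithms, dividing by $t$, and passing to $\limsup_{t\to\infty}$, the characterization of $\lambda_P$ in~\eqref{lyap exp} (valid for any $z\gg 0$ and, by unique ergodicity, for any $p\in P$) delivers
\[
\lambda_P(h_1)-\epsilon\;\leq\; \lambda_P(h_2)\;\leq\; \lambda_P(h_1)+\epsilon,
\]
as required.

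The main obstacle I foresee is the monotonicity step itself, where the non-monotonicity in $y$ of the linear ``nonlinearity'' $h(p,x)\,y$ obstructs a direct use of Theorem~\ref{teor-comparacion}. The truncation argument above, or alternatively a variation-of-constants formula exploiting the strong positivity of $\phi_h$, resolves this cleanly; once it is in place, the proof is essentially an application of the exponential scaling identity.
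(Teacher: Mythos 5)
Your argument is essentially the paper's: compare solutions for $h\pm\varepsilon$ with those for $h_n$ via Theorem~\ref{teor-comparacion}, then use the exponential scaling identity $\phi_{h\pm\varepsilon}(t,p)=e^{\pm\varepsilon t}\phi_h(t,p)$ to conclude $\lambda_P(h\pm\varepsilon)=\lambda_P(h)\pm\varepsilon$, and hence the (Lipschitz) estimate $|\lambda_P(h_1)-\lambda_P(h_2)|\le\|h_1-h_2\|_\infty$. The truncation device you introduce is a careful refinement of a step the paper does not spell out: as you note, $h_1\le h_2$ does \emph{not} give $h_1(p,x)\,y\le h_2(p,x)\,y$ for $y<0$, so the pointwise hypothesis $f_1\le f_2$ of Theorem~\ref{teor-comparacion} fails for the raw linear terms, and passing to $h_i(p,x)\,y^+$ — whose solutions with initial data $z\gg 0$ coincide with the linear ones by strong positivity (Proposition~\ref{prop-strong monot lineal}) — closes this gap cleanly while leaving the rest of the argument unchanged.
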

\begin{proof}
Let $h\in C(P\times\bar U)$ and let $(h_n)_n\subset C(P\times\bar U)$ be such that $h_n\to h$ as $n\to\infty$. Then, in particular, fixed an $\varepsilon>0$ there exists an $n_0$ such that $h-\varepsilon\leq h_n\leq h+\varepsilon$ for any $n\geq n_0$. Applying Theorem~\ref{teor-comparacion} we deduce that $\lambda_P(h-\varepsilon)\leq  \lambda_P(h_n)\leq \lambda_P(h+\varepsilon)$ for any $n\geq n_0$. Now, since the linear cocycle for $h\pm\varepsilon$ is just given by $\exp(\pm\varepsilon\,t)\,\phi(t,p)$, it is straightforward that $\lambda_P(h\pm\varepsilon)=\lambda_P(h)\pm\varepsilon$, so that $\lambda_P(h_n)\to \lambda_P(h)$ as $n\to\infty$. The proof is finished.
\end{proof}
Let us now deal with the convexity of $\lambda_P(h)$.
\begin{prop}\label{prop-convex}
For any $h_1,\,h_2\in C(P\times\bar U)$ and any $0\leq r\leq 1$,
\[
 \lambda_P(r h_1+(1-r)h_2)\leq r\lambda_P(h_1)+(1-r)\lambda_P(h_2)\,.
\]
\end{prop}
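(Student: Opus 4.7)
The plan is to exploit the log-transform $w=\ln y$, which turns the linear parabolic PDE into a Hamilton--Jacobi-type equation $\partial_t w = \Delta w + |\nabla w|^2 + h$, together with the elementary convexity of $q\mapsto |q|^2$. Fix $p\in P$ and a strictly positive initial datum $z\gg 0$ in $X$, set $h_r:=rh_1+(1-r)h_2$, and for $i=1,2$ denote by $y_i(t,x):=(\phi_{h_i}(t,p)z)(x)$ the solution of the linear IBV problem with coefficient $h_i$, which by Proposition~\ref{prop-strong monot lineal} is strongly positive on $[0,\infty)\times\bar U$. The natural candidate to dominate the solution associated with $h_r$ is the weighted geometric mean $y_r:=y_1^r\,y_2^{1-r}=\exp(r\ln y_1+(1-r)\ln y_2)$.

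A short pointwise computation using the pointwise convexity inequality $|\nabla w_r|^2\leq r|\nabla w_1|^2+(1-r)|\nabla w_2|^2$ (with $w_r:=r\ln y_1+(1-r)\ln y_2$) shows
\[
\partial_t y_r \geq \Delta y_r + h_r(p{\cdot}t,x)\,y_r, \qquad y_r(0,\cdot)=z,
\]
so $y_r$ is a supersolution of the linear problem with coefficient $h_r$. The Robin boundary condition is preserved as well: on $\partial U$ one has $\partial_n y_r/y_r = r(\partial_n y_1/y_1)+(1-r)(\partial_n y_2/y_2)=-\alpha$, hence $\alpha y_r+\partial_n y_r=0$. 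Applying the parabolic comparison principle (in the spirit of Theorem~\ref{teor-comparacion}) yields the pointwise bound
\[
(\phi_{h_r}(t,p)z)(x)\leq y_r(t,x)=y_1(t,x)^r\,y_2(t,x)^{1-r}, \qquad t\geq 0,\; x\in\bar U.
\]

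Taking sup-norms together with the elementary estimate $\|y_1^r y_2^{1-r}\|_\infty\leq \|y_1\|_\infty^r\,\|y_2\|_\infty^{1-r}$ gives
\[
\|\phi_{h_r}(t,p)z\|\leq \|\phi_{h_1}(t,p)z\|^r\,\|\phi_{h_2}(t,p)z\|^{1-r}.
\]
Applying $(1/t)\ln$, taking $\limsup$ as $t\to\infty$, and invoking the characterization \eqref{lyap exp} of $\lambda_P$ with a fixed $z\gg 0$ delivers exactly $\lambda_P(h_r)\leq r\lambda_P(h_1)+(1-r)\lambda_P(h_2)$.

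The main technical obstacle is that mild solutions of the linear ACP need not be classical, so the pointwise log-transform manipulation above is not literally valid. The natural way around this is to regularize as in the proof of Theorem~\ref{teor-comparacion}: approximate $h_1$ and $h_2$ uniformly by mollified coefficients $h_{1,n},h_{2,n}$ for which the associated mild solutions are genuinely classical (and strictly positive for $t>0$), apply the computation above verbatim to get the convexity inequality for the exponents $\lambda_P(rh_{1,n}+(1-r)h_{2,n})$, and finally pass to the limit $n\to\infty$ using the continuity of the upper Lyapunov exponent with respect to the coefficient established in Proposition~\ref{prop-continua}.
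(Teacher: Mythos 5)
Your core argument is the same as the paper's: for strongly positive solutions $y_1,y_2$ of the linear problems with coefficients $h_1,h_2$, the weighted geometric mean $y_r=y_1^r y_2^{1-r}$ is a supersolution of the problem with coefficient $h_r=rh_1+(1-r)h_2$ (the convexity of $q\mapsto|q|^2$ is exactly the sign of the extra term that the paper computes componentwise via the convexity of $s\mapsto s^2$), comparison gives $\|\phi_{h_r}(t,p)z\|\le\|\phi_{h_1}(t,p)z\|^r\|\phi_{h_2}(t,p)z\|^{1-r}$, and $\limsup\frac1t\ln$ together with \eqref{lyap exp} in the uniquely ergodic setting finishes the smooth case. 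Your Hamilton--Jacobi reformulation $\partial_t w=\Delta w+|\nabla w|^2+h$ is a clean repackaging of the same computation, and the observation that the Robin boundary condition is preserved is correct and worth making explicit. So far, same route as the paper.

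The gap is in the reduction to the regular case. You propose to ``regularize as in the proof of Theorem~\ref{teor-comparacion}'' and then pass to the limit via the continuity of $\lambda_P$ (Proposition~\ref{prop-continua}). But the regularization in Theorem~\ref{teor-comparacion} is performed after fixing $p$: it Tietze-extends and mollifies the map $(t,x,y)\mapsto f(p{\cdot}t,x,y)$ on $\R\times\R^m\times\R$. The resulting smooth coefficients depend on $(t,x)$ and are \emph{not} of the form $h_{i,n}(p{\cdot}t,x)$ for any $h_{i,n}\in C(P\times\bar U)$; hence $\lambda_P(h_{i,n})$ and $\lambda_P(rh_{1,n}+(1-r)h_{2,n})$ are simply not defined, and Proposition~\ref{prop-continua} cannot be invoked. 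The paper gets around exactly this obstruction by using Schwartzman's density theorem, which produces approximants that remain in $C(P\times\bar U)$ and are $C^1$ in $x$ and $C^1$ along orbits in $P$ (so mild solutions are classical), making $\lambda_P$ well-defined along the approximating sequence. Alternatively, you could keep your fix-$p$ mollification but pass to the limit one level earlier, at the mild-solution estimate $\|\phi_{h_r}(t,p)z\|\le\|\phi_{h_1}(t,p)z\|^r\|\phi_{h_2}(t,p)z\|^{1-r}$ for each fixed $p$, $t$, $z$ (using the uniform-on-compact-time-intervals convergence of mild solutions proved inside Theorem~\ref{teor-comparacion}), and only then take $\limsup\frac1t\ln$; this avoids the need for the approximants to live in $C(P\times\bar U)$ and would be a legitimate variant of the paper's argument. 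As written, though, the last step of your proof does not close.
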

\begin{proof}
First, let us assume that $h_1$ and $h_2$ are regular enough so that the mild solutions of the associated IBV problems  become classical solutions. For a fixed $p\in P$, and any fixed $z_0\in X$, $z_0\gg 0$, on the one hand, let $y_1(t,x)$ and $y_2(t,x)$ denote respectively the solution of the IBV problem for $i=1,2$:
\begin{equation*}
\left\{\begin{array}{l} \des\frac{\partial y}{\partial t}  =
 \Delta \, y+h_i(p{\cdot}t,x)\,y\,,\quad t>0\,,\;\, x\in  U,\,\\[.2cm]
By:=\alpha(x)\,y+\des\frac{\partial y}{\partial n} =0\,,\quad t>0\,,\;\, x\in \partial U,\,\\[.2cm]
y(0,x)=z_0(x)\,,\quad  x\in  \bar U;\,
\end{array}\right.
\end{equation*}
and let $\phi_1(t,p)$ and $\phi_2(t,p)$ be the associated linear cocycles, so that $y_i(t,x)=(\phi_i(t,p)\,z_0)(x)$, $t\geq 0,\,x\in \bar U$, for $i=1,2$.
On the other hand, let $y(t,x)$ be the solution of the IBV problem
\begin{equation*}
\left\{\begin{array}{l} \des\frac{\partial y}{\partial t}  =
 \Delta \, y+(r h_1(p{\cdot}t,x)+(1-r) h_2(p{\cdot}t,x))\,y\,,\quad t>0\,,\;\, x\in  U,\,\\[.2cm]
By:=\alpha(x)\,y+\des\frac{\partial y}{\partial n} =0\,,\quad t>0\,,\;\, x\in \partial U,\,\\[.2cm]
y(0,x)=z_0(x)\,,\quad  x\in  \bar U;\,
\end{array}\right.
\end{equation*}
with associated linear cocycle $\Phi(t,p)$, so that $y(t,x)=(\Phi(t,p)\,z_0)(x)$, $t\geq 0,\,x\in \bar U$. By the strong monotonicity of these problems, $y_1(t,x)$, $y_2(t,x)$, $y(t,x)>0$ for any $t\geq 0$, $x\in \bar U$, and we can consider $z(t,x)=\exp(r\ln y_1(t,x)+(1-r)\ln y_2(t,x))$.
\par
We do some routine calculations for $z(t,x)$:
\begin{align*}
\des\frac{\partial z}{\partial t}  = & z\left(\frac{r}{y_1}\,\frac{\partial y_1}{\partial t}+\frac{1-r}{y_2}\,\frac{\partial y_2}{\partial t} \right)\\ = & z\left(\frac{r}{y_1}\,\Delta\,y_1+\frac{1-r}{y_2}\,\Delta\,y_2 +r h_1(p{\cdot}t,x)+(1-r) h_2(p{\cdot}t,x) \right);
 \\ \des\frac{\partial z}{\partial x_i}  = & z\left(\frac{r}{y_1}\,\frac{\partial y_1}{\partial x_i}+\frac{1-r}{y_2}\,\frac{\partial y_2}{\partial x_i} \right)\Rightarrow \nabla z = z\left(\frac{r}{y_1}\,\nabla y_1 + \frac{1-r}{y_2}\,\nabla y_2  \right);\\
 \Delta\,z =&  z  \sum_{i=1}^m \left( \left( \frac{r}{y_1}\,\frac{\partial y_1}{\partial x_i}+\frac{1-r}{y_2}\,\frac{\partial y_2}{\partial x_i}\right)^{\!2} - \frac{r}{y_1^2}\,\left(\frac{\partial y_1}{\partial x_i}\right)^{\!2}- \frac{1-r}{y_2^2}\,\left(\frac{\partial y_2}{\partial x_i}\right)^{\!2} \right)\\ &  +  z\left( \frac{r}{y_1}\,\Delta\, y_1+\frac{1-r}{y_2}\,\Delta \,y_2 \right) \leq z\left( \frac{r}{y_1}\,\Delta\, y_1+\frac{1-r}{y_2}\,\Delta \,y_2 \right),
 \end{align*}
where the convexity of the map $\R\to\R$, $s\mapsto s^2$ has been applied in the inequality. Therefore, $z(t,x)$ is a solution of the problem
\begin{equation*}
\left\{\begin{array}{l} \des\frac{\partial z}{\partial t}  \geq
 \Delta \, z+(r h_1(p{\cdot}t,x)+(1-r) h_2(p{\cdot}t,x))\,z\,,\quad t>0\,,\;\, x\in  U,\,\\[.2cm]
Bz:=\alpha(x)\,z+\des\frac{\partial z}{\partial n} =0\,,\quad t>0\,,\;\, x\in \partial U,\,\\[.2cm]
z(0,x)=z_0(x)\,,\quad  x\in  \bar U.
\end{array}\right.
\end{equation*}
\par
Then, a standard argument of comparison of solutions (see Smith~\cite{smit}) says that $z(t,x)\geq y(t,x)$, that is,  $y_1(t,x)^ry_2(t,x)^{1-r}\geq y(t,x)$ for $t\geq 0$, $x\in  \bar U$. In other words, we have proved in $X$ that  $(\phi_1(t,p)\,z_0)^r(\phi_2(t,p)\,z_0)^{1-r}\geq \Phi(t,p)\,z_0$. Applying monotonicity of the norm and the fact that $X$ is a Banach algebra,
\[
\|\phi_1(t,p)\,z_0\|^r\|\phi_2(t,p)\,z_0\|^{1-r}\geq \|(\phi_1(t,p)\,z_0)^r(\phi_2(t,p)\,z_0)^{1-r}\|\geq \|\Phi(t,p)\,z_0\|\,,
\]
and taking logarithm,
\begin{equation*}
r\ln \|\phi_1(t,p)\,z_0\|+ (1-r)\ln \|\phi_2(t,p)\,z_0\|\geq \ln \|\Phi(t,p)\,z_0\|\,,\quad t\geq 0\,.
\end{equation*}
\par
As it has been remarked before, in the uniquely ergodic case the upper Lyapunov exponent equals the value of any of the Lyapunov exponents, and in particular that of $p$, so that having~\eqref{lyap exp} in mind,  it suffices to divide by $t$ and take limits as $t\to\infty$ to get the convexity relation.
\par
To finish the proof, consider any $h_1,\,h_2\in C(P\times\bar U)$. Using a result by Schwartzman~\cite{schw} we can approximate these maps by respective sequences $(h_{1,n})_n,\,(h_{2,n})_n$ of sufficiently regular maps. More precisely, maps of class $C^1$ in $U$ and of class $C^{1\!}$ along the orbits in $P$, that is, for any $p\in P$ and $x\in\bar U$ the maps $h_{i,n}(p{\cdot}t,x)$ are continuously differentiable in $t\in\R$ ($i=1,2$, $n\geq 1$). Since the convexity relation applies to the pairs $h_{1,n},\,h_{2,n}$ for any $n\geq 1$, with the continuity result in  Proposition~\ref{prop-continua} we are done.
\end{proof}
As a corollary, since $\lambda_P(0)\leq 0$, we get the superlinear character of $\lambda_P$, that is,
$\lambda_P(r h)\leq r\lambda_P(h)$ for any $h\in C(P\times\bar U)$ and any $0\leq r\leq 1$.
\par
Once we have studied some basic properties of the map $\lambda_P(h)$, our aim is to give a description of the dynamics of the linear semiflow $\tau_L$ when $\lambda_P(h)=0$, depending on the map $h$. As it was also done in Caraballo et al.~\cite{caloNonl}, from now on we assume that the minimal and uniquely ergodic flow on $P$ is not periodic. In $C(P)$, the space of continuous functions on $P$, we consider the Banach  space $C_0(P)=\left\{a\in C(P)\mid \int_P a\,d\nu=0\right\}$,  its  vector subspace
\[
B(P)=\left\{ a\in C_0(P) \;\Big|\; \sup_{t\in\R} \left| \int_0^t a(p{\cdot}s)\,ds\right|<\infty \;\text{ for any}\;p\in P \right\}
\]
of the continuous functions on $P$ with zero mean and bounded primitive, and its complement $\mathcal{U}(P)=C_0(P)\setminus B(P)$ of the continuous functions on $P$ with zero mean and unbounded primitive. As a consequence of Lemma 5.1 in Campos et al.~\cite{caot}, $B(P)$ is a dense set of first category in $C_0(P)$ and thus $\mathcal{U}(P)$ is a residual set (see Gottschalk and Hedlund~\cite{gohe} and Johnson~\cite{john} for the result in the almost periodic and aperiodic case).
\par
Now, in the complete metric space $C_0(P\times\bar U)=\{h\in C(P\times\bar U)\mid \lambda_P(h)=0\}$ 
we introduce the sets
\begin{align*}
B(P\times \bar U) &= \{ h\in C_0(P\times\bar U)\mid \sup_{t\in\R} | \ln c(t,p)|<\infty \;\text{ for any}\;p\in P \}\,\; \text{and} \\
\mathcal{U}(P\times \bar U) &=C_0(P\times\bar U)\setminus B(P\times \bar U) \,,
\end{align*}
for the associated 1-dim linear cocycle $c(t,p)$  given in Definition~\ref{defi-cociclo c}.
Note that the condition determining $B(P\times \bar U)$ is equivalent to saying that for any $p\in P$ the linear positive cocycle $c(t,p)$ is both bounded away from $0$ and bounded above.
\par
Next, we state without proof two technical results given for general positive 1-dim linear cocycles $c(t,p)$, which are in correspondance with two classical results for maps in $C_0(P)$. The first one is the adaptation of Proposition~12 in~\cite{caloNonl} (proved in~\cite{gohe}), whereas the second one has the spirit of the oscillation result stated in Theorem~13 in~\cite{caloNonl} (proved in~\cite{john}). In fact, the proofs can be  adapted respectively from the proofs of Proposition~A.1 and Theorem~A.2 in Jorba et al.~\cite{jnot}.
\begin{prop}\label{prop-gott}
Let $c(t,p)$ be a continuous positive 1-dim linear cocycle. Then, the following conditions are equivalent:
\begin{itemize}
\item[(i)] There exists a function $k\in C(P)$ such that
\[
k(p{\cdot}t)-k(p)=\ln c(t,p) \;\;\text{for all}\;\; p\in P,\;t\in\R\,.
\]
\item[(ii)] For any $p\in P$, $\sup_{t\in\R} | \ln c(t,p)|<\infty $.
\item[(iii)] There exists a  $p_0\in P$ such that  $\sup_{t\in\R} | \ln c(t,p_0)|<\infty $.
\item[(iv)] There exists a  $p_0\in P$ such that
\[
\text{either}\;\; \sup_{t\geq 0} | \ln c(t,p_0)|<\infty \quad\text{or}\;\; \sup_{t\leq 0} | \ln c(t,p_0)|<\infty\,.
\]
\end{itemize}
\end{prop}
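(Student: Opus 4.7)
The implications (i)$\Rightarrow$(ii)$\Rightarrow$(iii)$\Rightarrow$(iv) are routine: if $\ln c(t,p)=k(p{\cdot}t)-k(p)$ with $k\in C(P)$, then $|\ln c(t,p)|\leq 2\|k\|_\infty$ by compactness of $P$, giving (ii); and (ii)$\Rightarrow$(iii)$\Rightarrow$(iv) only drop quantifiers. The real content is the Gottschalk--Hedlund-type implication (iv)$\Rightarrow$(i), which I describe below.

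Set $a(t,p):=\ln c(t,p)$, a continuous additive real cocycle: $a(t+s,p)=a(t,p{\cdot}s)+a(s,p)$. The plan is to pass to the skew-product flow on $P\times\R$,
\[
T_t(p,y)=(p{\cdot}t,\,y+a(t,p)),
\]
and construct a continuous $T$-invariant graph over $P$. Under the hypothesis $\sup_{t\geq 0}|a(t,p_0)|<\infty$ (the case of negative times is symmetric, replacing $\omega$-limits by $\alpha$-limits), the forward semiorbit of $(p_0,0)$ lies in the compact strip $P\times[-M,M]$, so its $\omega$-limit set $\Omega$ is nonempty, compact, and $T$-invariant in both time directions (since $T_t$ is a flow of homeomorphisms). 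Zorn's lemma produces a compact $T$-invariant minimal subset $K\subseteq\Omega$; its projection to $P$ is compact and $\theta$-invariant, hence all of $P$ by minimality of the base flow.

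The main step is to show that each fiber $K_p=\{y\in\R:(p,y)\in K\}$ is a single point. I would exploit that the vertical translations $S_c(p,y)=(p,y+c)$ commute with every $T_t$, so $S_c(K)$ is itself compact and $T$-invariant for each $c\in\R$. If $(p_0,y_1),(p_0,y_2)\in K$ with $c:=y_2-y_1>0$, then $(p_0,y_2)\in K\cap S_c(K)$, so $K\cap S_c(K)$ is a nonempty compact $T$-invariant subset of $K$; minimality forces $K\subseteq S_c(K)$, and applying $S_{-c}$ together with minimality once more yields $S_c(K)=K$. Iterating gives $S_{nc}(K)=K$ for every $n\in\Z$, which translates $K_{p_0}$ into itself under every integer multiple of $c$, contradicting the boundedness of $K_{p_0}\subset\R$. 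Hence each $K_p$ is a singleton $\{k(p)\}$.

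Finally, the graph of $k:P\to\R$ equals the compact set $K$, so $k$ is continuous (graph compact in $P\times\R$ with $P$ Hausdorff). Invariance of $K$ under $T_t$ applied to $(p,k(p))\in K$ reads $(p{\cdot}t,\,k(p)+a(t,p))\in K$, which by single-valuedness gives $k(p{\cdot}t)=k(p)+\ln c(t,p)$ for all $p\in P$ and $t\in\R$, i.e., conclusion (i). The main obstacle, as is standard in Gottschalk--Hedlund-style arguments, is the singleton-fiber step; the rest is a routine compactness-and-minimality package.
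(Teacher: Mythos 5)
The paper states this proposition without proof, citing only that the argument can be adapted from the Gottschalk--Hedlund theorem and from Proposition~A.1 in Jorba et al.~\cite{jnot}; your proof is a correct, self-contained rendering of exactly that Gottschalk--Hedlund argument. The compactness-and-minimality package you assemble --- passing to the skew-product flow $T_t(p,y)=(p{\cdot}t,\,y+\ln c(t,p))$, extracting a minimal set $K$ inside the $\omega$- (or $\alpha$-) limit set of a bounded semiorbit, using commutation with the vertical translations $S_c$ together with minimality of $K$ to force $S_c(K)=K$ whenever two points share a fiber (contradicting compactness), and reading off continuity of $k$ from compactness of the graph $K$ --- is carried out correctly and matches the approach the paper alludes to.
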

\begin{teor}\label{teor-johnson}
Let $c(t,p)$ be a continuous positive 1-dim linear cocycle and assume that it does not satisfy the conditions in Proposition~$\ref{prop-gott}$, and the associated real linear skew-product flow $\R\times P\times\R\to P\times\R$, $(t,p,y)\mapsto (p{\cdot}t,c(t,p)\,y)$ does not have an exponential dichotomy. Then,  there exists an invariant and residual set $P_{\rm{o}}\subset P$ such that for any $p\in P_{\rm{o}}$ there exist sequences  (depending on $p$) $(t_n^i)_n$, $i=1,2,3,4$ with  $t_n^i\uparrow \infty$ for $i=1,2$ and $t_n^i\downarrow -\infty$ for $i=3,4$  such that
\[
\lim_{n\to\infty}c(t_n^i,p) =0 \;\;\text{for}\;\;i=1,3 \quad\text{and}\quad
\lim_{n\to\infty} c(t_n^i, p)=\infty\;\;\text{for}\;\;i=2,4\,.
\]
\end{teor}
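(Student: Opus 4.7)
The plan is to follow the Baire-category strategy used in Theorem~A.2 of Jorba et al.~\cite{jnot} for additive cocycles, transposed to the positive multiplicative cocycle $c$. Setting $a(t,p)=\ln c(t,p)$, one obtains a jointly continuous additive cocycle over $(P,\theta,\R)$ with $a(0,p)=0$, and for each $N\in\N$ one introduces the four sets
\[
U_N^{+}=\{p\in P\mid \exists\,t\ge N,\;a(t,p)>N\},\qquad U_N^{-}=\{p\in P\mid \exists\,t\ge N,\;a(t,p)<-N\},
\]
\[
V_N^{+}=\{p\in P\mid \exists\,t\le -N,\;a(t,p)>N\},\qquad V_N^{-}=\{p\in P\mid \exists\,t\le -N,\;a(t,p)<-N\}.
\]
Openness of all four is immediate from joint continuity of $a$ and the strict inequalities. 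If I can show that each is dense in $P$, then by Baire the set $P_{\rm{o}}:=\bigcap_{N\in\N}\bigl(U_N^{+}\cap U_N^{-}\cap V_N^{+}\cap V_N^{-}\bigr)$ is residual, and for every $p\in P_{\rm{o}}$, selecting for each $N$ a witness time in each of the four sets yields the four sequences $(t_n^i)_n$ with the prescribed asymptotics.

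The core step is the density of each of the four families, which I illustrate with $U_N^{+}$. Suppose, for contradiction, that there is a non-empty open $W\subset P$ on which $a(t,p)\le N$ for every $t\ge N$; continuity and compactness of $[0,N]\times\overline{W}$ then give $\sup_{p\in W,\,t\ge 0}a(t,p)<\infty$. By minimality, for every $p_0\in P$ there is an $s_0\ge 0$ with $p_0{\cdot}s_0\in W$, and the cocycle identity $a(t,p_0)=a(s_0,p_0)+a(t-s_0,p_0{\cdot}s_0)$ for $t\ge s_0$ promotes this to $\sup_{t\ge 0}a(t,p_0)<\infty$ for every $p_0\in P$. The analogous argument applied to the other three families shows that lack of density of $U_N^{-}$ (resp.~$V_N^{+}$, $V_N^{-}$) produces a uniform pointwise bound of the corresponding kind on $a(t,p)$. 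In the combined two-sided forward-bounded case one obtains $\sup_{t\ge 0}|a(t,p)|<\infty$ for every $p$, and Proposition~\ref{prop-gott}(iv) forces $c$ to be a coboundary, contradicting the first hypothesis. In the remaining one-sided cases one has to invoke the absence-of-exponential-dichotomy hypothesis: a uniform one-sided bound on $a$, combined with minimality of the base flow and the cocycle identity, is shown to upgrade to the uniform exponential estimates that define an exponential dichotomy of the real flow on $P\times\R$, producing the required contradiction.

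Finally, the $\theta$-invariance of $P_{\rm{o}}$ follows from the identity $a(t,p{\cdot}s)=a(t+s,p)-a(s,p)$: a translation of the base point by any fixed $s\in\R$ only shifts $t\mapsto a(t,p{\cdot}s)$ by the finite constant $-a(s,p)$, so divergence to $\pm\infty$ along suitable time sequences is preserved, and hence $p\in P_{\rm{o}}$ if and only if $p{\cdot}s\in P_{\rm{o}}$. The main obstacle I expect is precisely the one-sided mixed case in the density step: promoting non-uniform one-sided boundedness of $a(t,p)$ along individual orbits into the uniform exponential dichotomy estimate on $P\times\R$ is the technical heart of the argument in~\cite{jnot} and must be imported with care.
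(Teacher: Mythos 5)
The paper does not actually prove this theorem: it explicitly states that the proof can be adapted from Theorem~A.2 of Jorba et al.~\cite{jnot}, so there is no in-paper argument to compare against. Your high-level strategy matches the cited reference: pass to the additive cocycle $a=\ln c$, form the four open families $U_N^{\pm},V_N^{\pm}$, prove each is dense, and intersect via Baire. Your openness argument, the Baire step, and the $\theta$-invariance argument via $a(t,p{\cdot}s)=a(t+s,p)-a(s,p)$ are all correct.

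However, the density step, which carries the whole content of the theorem, is not established, and the mechanism you sketch for it is not the right one. To prove that each of the four families is dense you must derive a contradiction from the failure of a \emph{single} one of them, so your ``combined two-sided forward-bounded'' case (where Proposition~\ref{prop-gott} applies directly) is logically irrelevant: it is not a case you are allowed to assume. Moreover, for the genuine one-sided case the idea that a one-sided bound on $a$ ``upgrades to the uniform exponential estimates that define an exponential dichotomy'' does not work; a uniform upper bound $a(t,p)\le M$ for $t\ge 0$ forbids exponential growth but does not force exponential decay. The correct route is different. First, the covering/minimality argument actually yields a \emph{uniform} bound $a(t,p)\le M$ for all $p\in P$ and $t\ge 0$ (your version is only pointwise in $p$; the upgrade is a standard finite-subcover argument on $P=\bigcup_{s\ge 0}\theta_{-s}(W)$). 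Reading this through the cocycle identity gives $a(t,p)\ge -M$ for all $p\in P$ and $t\le 0$. Second, the no-exponential-dichotomy hypothesis furnishes, via Selgrade's theorem (exactly as invoked in the proof of Proposition~\ref{prop-compacto invariante}(iii)), a point $p_0$ and a nonzero bounded entire orbit, i.e.\ $\sup_{t\in\R}a(t,p_0)<\infty$. Combining the two bounds gives $\sup_{t\le 0}|a(t,p_0)|<\infty$, which is precisely condition (iv) of Proposition~\ref{prop-gott} and contradicts the first hypothesis. This Selgrade-plus-Gottschalk--Hedlund contradiction is the missing lemma; as you yourself flag, your proposal does not supply it, and the direction of the argument you gesture at (producing a dichotomy) is not how the contradiction is reached.
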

We will sometimes refer to $P_{\rm{o}}$ as the oscillation set of $c(t,p)$.
\begin{nota}\label{remark-DE}
Note that if $h\in C_0(P\times\bar U)$, then $\Sigma_{\rm{pr}}(\tau_L)=\{0\}$, that is, $\tau_L$ restricted to the principal bundle does not have an exponential dichotomy. In other words, the associated real linear skew-product flow $\R\times P\times\R\to P\times\R$, $(t,p,y)\mapsto (p{\cdot}t,c(t,p)\,y)$ does not have an exponential dichotomy. This means that given any $h\in C_0(P\times\bar U)$, either the associated 1-dim cocycle $c(t,p)$ satisfies the equivalent conditions in Proposition~\ref{prop-gott} if $h\in B(P\times\bar U)$, or Theorem~\ref{teor-johnson} applies if $h\in \mathcal{U}(P\times \bar U)$. Note also that if the flow on $P$ is periodic, then $C_0(P\times\bar U)=B(P\times\bar U)$.
\end{nota}
For the sake of completeness, and because it will be used later on, we include here a result for 1-dim linear cocycles in line with the Corollary of Theorem~1 in Shneiberg~\cite{shne} given for integrable maps $f:P\to\R$ with zero mean, which says that for almost all $p\in P$ there exists a sequence $(t_n)_n\uparrow \infty$ such that  $\int_0^{t_n} f(p{\cdot}s)\,ds = 0$ for any $n\geq 1$. The corresponding adaptation for cocycles reads as follows.
\begin{teor}\label{teor-shneiberg}
Let $c(t,p)$ be a continuous positive 1-dim linear cocycle and assume that the associated real linear skew-product flow $\R\times P\times\R\to P\times\R$, $(t,p,y)\mapsto (p{\cdot}t,c(t,p)\,y)$ does not have an exponential dichotomy. Then, for almost all $p\in P$ there exists a sequence $(t_n)_n\uparrow \infty$ such that $
c(t_n,p)=1$ for any $n\geq 1$.
\end{teor}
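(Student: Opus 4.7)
My plan is to reduce the statement to the classical Shneiberg theorem (corollary of Theorem~1 in~\cite{shne}) just recalled, by passing to the additive cocycle $a(t,p):=\ln c(t,p)$ and exploiting continuity together with unique ergodicity to upgrade approximate zero-recurrence into exact zero-hitting.

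I would begin by checking that the continuous function $f(p):=\ln c(1,p)$ has zero $\nu$-mean. Because the flow on $P$ is uniquely ergodic and the real linear flow $(p,y)\mapsto(p{\cdot}t,c(t,p)\,y)$ has 1-dim fibers, its Sacker-Sell (principal) spectrum reduces to the singleton $\{\lambda_P\}=\{\int_P\ln c(1,p)\,d\nu\}$; the no-ED hypothesis then forces $\lambda_P=0$, so $\int_P f\,d\nu=0$. By Birkhoff's theorem together with~\eqref{rep int exponente}, this also yields $a(t,p)/t\to 0$ uniformly in $p\in P$. Next comes the core ergodic input: the skew-product transformation $T(p,y):=(p{\cdot}1,\,y+f(p))$ preserves $\nu\times\mathrm{Leb}$ on $P\times\R$ and, since $\int f\,d\nu=0$, an Atkinson-type recurrence theorem applied to the discrete Birkhoff cocycle $a(n,p)=\sum_{k=0}^{n-1}f(p{\cdot}k)$ gives $\liminf_n|a(n,p)|=0$ for $\nu$-a.e. $p$, ruling out divergence to $\pm\infty$.

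The last step promotes these near-zeros to exact zeros of the continuous cocycle $a(\cdot,p)$. For $\nu$-a.e. $p$ I aim to show that $a(\cdot,p)$ takes both positive and negative values at arbitrarily large times; granting this, continuity of $t\mapsto a(t,p)$ and the intermediate value theorem produce a sequence $t_n\uparrow\infty$ with $a(t_n,p)=0$, equivalently $c(t_n,p)=1$. The sign-change assertion is obtained by a Shneiberg-type argument: the zero-mean condition $\int f\,d\nu=0$ implies that $a(1,q)$ cannot be uniformly one-signed on $P$, so by minimality and continuity there is a point $(s_0,q_0)\in(0,1]\times P$ with $a(s_0,q_0)=0$; combining this with the cocycle identity $a(t,p)=a(n,p)+a(t-n,p{\cdot}n)$ and the near-recurrence from Atkinson allows one to locate continuous times at which $a(\cdot,p)$ actually changes sign, producing a zero via IVT.

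The main obstacle is precisely this last step: bridging from the approximate zero-recurrence guaranteed by Atkinson (discrete $\liminf=0$) to exact zero-hitting of the continuous cocycle. This is the content of Shneiberg's refinement of Atkinson's theorem, and its adaptation to continuous positive 1-dim cocycles follows the careful argument of Proposition~A.1 and Theorem~A.2 in Jorba et al.~\cite{jnot}, where the key ingredient is the interplay between continuity of the cocycle in time, the additive cocycle identity, and the vanishing Lyapunov exponent coming from unique ergodicity.
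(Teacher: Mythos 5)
The paper does not prove Theorem~\ref{teor-shneiberg}: the result is stated as the cocycle analogue of Shneiberg's corollary, and the acknowledgements credit Anthony Quas for a short proof that was communicated to the authors but not reproduced in the text. There is therefore no paper argument to compare against, and your proposal must be judged on its own terms. Your preliminary reductions are correct and efficiently set up: unique ergodicity plus the one-dimensionality of the cocycle collapse the Sacker--Sell spectrum to the singleton $\bigl\{\int_P \ln c(1,\cdot)\,d\nu\bigr\}$, the no-dichotomy hypothesis forces this value to vanish, and Atkinson's recurrence theorem applied to the $\Z$-skew-product $T(p,y)=(p{\cdot}1,\,y+\ln c(1,p))$ yields $\liminf_n|\ln c(n,p)|=0$ for $\nu$-a.e.~$p$.

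The genuine gap is exactly where you place it, and you do not close it. Atkinson produces integer times $n$ with $|\ln c(n,p)|$ small, and minimality separately produces integer times with $p{\cdot}n$ near any chosen point $q_0$ where $\ln c(\cdot,q_0)$ vanishes, but nothing you invoke guarantees that these two sets of times intersect: that joint recurrence is the non-trivial content of Shneiberg's refinement, and conservativity of the extended flow on $P\times\R$ does not automatically descend to the measure-zero section $P\times\{0\}$ that you would need. Without it, you cannot rule out that $\ln c(t,p)$ is, say, strictly positive for all large $t$ while still having $\liminf=0$, on a set of $p$ of positive $\nu$-measure; excluding this scenario requires a further step (for instance, showing it forces the cocycle to be a measurable coboundary and then analysing the transfer function via recurrence). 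Finally, the reference you give for filling the gap is misattributed: Proposition~A.1 and Theorem~A.2 of Jorba et al.~\cite{jnot} are the sources the paper adapts for Proposition~\ref{prop-gott} (Gottschalk--Hedlund type) and Theorem~\ref{teor-johnson} (Johnson-type oscillation), not for the Shneiberg-type exact-recurrence step, and they do not contain the missing ingredient.
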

In the following result, the dynamics of the linear semiflow $\tau_L$ is described when $h\in B(P\times \bar U)$. Basically, it means bounded orbits, both away from $0$ and above, for strongly positive initial data.
\begin{teor}\label{teor-equivalencias}
Let $h\in C_0(P\times \bar U)$ and let us fix a reference vector $z_0\gg 0$ in $X$. The following statements are equivalent:
\begin{itemize}
\item[(i)] There exist a $p_0\in P$ and constants $c_0, C_0>0$ such that $c_0\,z_0\leq \phi(t,p_0)\,z_0\leq C_0\,z_0$ for any $t\geq 0$.
\item[(ii)] For any $p\in P$ and $z\in X$, $z\gg 0$, there exist constants $c(p,z), C(p,z)>0$ such that $c(p,z)\,z_0\leq \phi(t,p)\,z\leq C(p,z)\,z_0$ for any $t\geq 0$.
\item[(iii)] $h\in B(P\times \bar U)$.
\item[(iv)] For any $p\in P$ there exists a $C(p)>0$ such that $\phi(t,p)\,z_0\leq C(p)\,z_0$ for any $t\geq 0$.
\item[(v)] For any $p\in P$ there exists a $c(p)>0$ such that $c(p)\,z_0\leq \phi(t,p)\,z_0$ for any $t\geq 0$.
\end{itemize}
\end{teor}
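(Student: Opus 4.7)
The plan is to reduce all five statements to boundedness properties of the scalar cocycle $c(t,p)$ from Definition~\ref{defi-cociclo c} on the positive half-line, and then close the cycle using Proposition~\ref{prop-gott} and Theorem~\ref{teor-johnson}.

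First I would record a uniform comparison between $z_0$ and the continuous section $e(p)$ of the principal bundle. Since $p\mapsto e(p)$ is continuous, $e(p)\gg 0$, $z_0\gg 0$ and $P$ is compact, there exist $\alpha_1,\alpha_2>0$ with $\alpha_1 z_0\le e(p)\le \alpha_2 z_0$ for every $p\in P$. Given any $p\in P$ and any $z\gg 0$ in $X$, strong positivity applied to $z$ and $e(p)$ provides scalars $0<a(p,z)\le b(p,z)$ with $a(p,z)\,e(p)\le z\le b(p,z)\,e(p)$. Applying the monotone operator $\phi(t,p)$, using the cocycle relation~\eqref{c} and the uniform comparison $e(q)\asymp z_0$, I would obtain
\[
a(p,z)\,\alpha_1\,c(t,p)\,z_0 \;\le\; \phi(t,p)\,z \;\le\; b(p,z)\,\alpha_2\,c(t,p)\,z_0 \qquad\text{for every }t\ge 0.
\]
Because $z_0\neq 0$, this sandwich turns each of (i)--(v) into a one- or two-sided bound on $\ln c(t,\cdot)$ on $[0,\infty)$: condition (i) becomes $\sup_{t\ge 0}|\ln c(t,p_0)|<\infty$; (ii) becomes the same bound at every $p$; (iv) becomes $\sup_{t\ge 0}\ln c(t,p)<\infty$ for every $p$; (v) becomes $\inf_{t\ge 0}\ln c(t,p)>-\infty$ for every $p$; and (iii) is, by the definition of $B(P\times\bar U)$, the two-sided bound $\sup_{t\in\R}|\ln c(t,p)|<\infty$ for every $p$.

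Next I would close the chain of implications. Proposition~\ref{prop-gott} yields (i) $\Leftrightarrow$ (iii) at once, since the translated form of (i) is precisely hypothesis (iv) of that proposition applied at $p_0$. From the sandwich and the definition of $B(P\times\bar U)$ the implication (iii) $\Rightarrow$ (ii) is immediate, and (ii) trivially implies (i), (iv) and (v) by specialization. For the reverse one-sided implications (iv) $\Rightarrow$ (iii) and (v) $\Rightarrow$ (iii) I would argue by contradiction: suppose $h\in\mathcal{U}(P\times\bar U)$. By Remark~\ref{remark-DE} the associated real linear skew-product flow has no exponential dichotomy, and the failure of (iii) means that the conditions of Proposition~\ref{prop-gott} also fail, so Theorem~\ref{teor-johnson} applies and produces a residual set $P_{\mathrm{o}}\subset P$ such that, for any $p\in P_{\mathrm{o}}$, there exist sequences $t_n^1,t_n^2\uparrow\infty$ with $c(t_n^1,p)\to 0$ and $c(t_n^2,p)\to\infty$. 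Picking any $p\in P_{\mathrm{o}}$, the first sequence contradicts the uniform lower bound furnished by (v) and the second contradicts the uniform upper bound furnished by (iv).

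The main obstacle is precisely these one-sided implications (iv) $\Rightarrow$ (iii) and (v) $\Rightarrow$ (iii): Proposition~\ref{prop-gott} is not directly applicable because it requires a \emph{two-sided} bound on $|\ln c(t,p_0)|$ on some half-line, whereas (iv) and (v) each provide only a one-sided bound on $\ln c(t,p)$. Theorem~\ref{teor-johnson} is therefore essential, and what makes the argument work is the combination of the ``for every $p\in P$'' quantifier in (iv) and (v) with the fact that the oscillation in Theorem~\ref{teor-johnson} occurs in both directions on a residual set; either direction alone suffices to rule out the corresponding one-sided universal bound.
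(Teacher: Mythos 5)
Your proof is correct and relies on the same essential tools as the paper's: the sandwich $\alpha_1 z_0\le e(p)\le\alpha_2 z_0$ that transfers bounds on $\phi(t,p)z$ to bounds on $c(t,p)$, Proposition~\ref{prop-gott} for the two-sided boundedness statements, and Theorem~\ref{teor-johnson} for the one-sided implications (iv)$\Rightarrow$(iii) and (v)$\Rightarrow$(iii). You also correctly identify the genuine subtlety, namely that (iv) and (v) each give only a one-sided bound on $\ln c(t,p)$ so Proposition~\ref{prop-gott}(iv) is not directly applicable and the oscillation result is indispensable; the paper makes exactly the same observation.

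The one place where you diverge from the paper's route is the implication (i)$\Rightarrow$(ii). The paper proves it directly and without invoking the continuous separation: starting from the bounded semiorbit of $(p_0,z_0)$, it passes to its omega-limit set $K=\mathcal{O}(p_0,z_0)$, which is a compact $\tau_L$-invariant set projecting onto all of $P$ and contained fibrewise in the order interval $[c_0 z_0,C_0 z_0]$; then for an arbitrary $p$ it picks a point $(p,z^*)\in K$, sandwiches a given $z\gg 0$ between scalar multiples of $z^*$, and uses the fibrewise bound on $K$ to get the two-sided estimate on $\phi(t,p)z$. You instead route (i) through the scalar cocycle: (i) is equivalent to $\sup_{t\ge 0}|\ln c(t,p_0)|<\infty$ for one $p_0$, Proposition~\ref{prop-gott} upgrades this to (iii), and the sandwich then gives (ii). Your organisation is a bit more uniform (everything reduces to $c(t,\cdot)$ once and for all) and avoids introducing the omega-limit set; the paper's version of (i)$\Rightarrow$(ii) is self-contained in that it does not require the continuous separation for that particular step, though of course it is used elsewhere. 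Both are valid proofs of the theorem.
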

\begin{proof}
(i)$\Rightarrow$(ii): Since the trajectory of $(p_0,z_0)$ under $\tau_L$ lies in the order-interval $[c_0\,z_0,C_0\,z_0]$ and the cone is normal, it is bounded, and we can consider the omega-limit set $K=\mathcal{O}(p_0,z_0)$ which is a compact $\tau_L$-invariant set which projects over the whole $P$. Besides, for any $(p,z)\in K$, $c_0\,z_0\leq z\leq C_0\,z_0$. Now, take a $p\in P$ and a $z\in X$ with $z\gg 0$. For $p$ there is a pair $(p,z^*)\in K$ and we can take constants $c_1(p,z), C_1(p,z)>0$ such that $c_1(p,z)\,z^*\leq z\leq  C_1(p,z)\,z^*$. Then, for any $t\geq 0$, $c_0\,c_1(p,z)\,z_0\leq c_1(p,z)\, \phi(t,p)\,z^*\leq  \phi(t,p)\,z\leq  C_1(p,z)\, \phi(t,p)\,z^*\leq C_0\,C_1(p,z)\,z_0$ and it suffices to take $c(p,z)=c_0\,c_1(p,z)$ and $C(p,z)=C_0\,C_1(p,z)$.
\par
(ii)$\Rightarrow$(iii): Let us see that $\sup_{t\geq 0} | \ln c(t,p)|<\infty$ for any $p\in P$. First of all, from the continuity and strong positivity on the compact set $P$ of the map $e$ giving the leading direction in the continuous separation, and the fact that $z_0\gg 0$, one deduces that there exist constants $c_1,\,C_1>0$ such that $c_1\,z_0\leq e(p)\leq C_1\,z_0$ for any $p\in P$. Then, for $p\in P$ and $e(p)\gg 0$, take $c(p), C(p)>0$ given in (ii) such that $c(p)\,z_0\leq  \phi(t,p)\,e(p)=  c(t,p)\,e(p{\cdot}t)\leq C(p)\,z_0$ for any $t\geq 0$. We can then  deduce that the values of  $c(t,p)$  for $t\geq 0$ move between two positive constants. By Proposition~\ref{prop-gott}  we can conclude that  $h\in B(P\times\bar U)$.
\par
(iii)$\Rightarrow$(i), (iii)$\Rightarrow$(iv) and (iii)$\Rightarrow$(v): Using the previous relation $c_1\,z_0\leq e(p{\cdot}t)\leq C_1\,z_0$ for any $p\in P$ and $t\geq 0$ and~\eqref{c}, it is easy to deduce that
\begin{equation}\label{bounds}
\frac{c_1}{C_1} \,  c(t,p)\,z_0\leq  \phi(t,p)\,z_0\leq \frac{C_1}{c_1} \,  c(t,p)\,z_0\,,\quad p\in P,\;t\geq 0\,.
\end{equation}
Since in particular $\sup_{t\geq 0} | \ln c(t,p)|<\infty$ for any $p\in P$, this means that for each $p\in P$, $c(t,p)$ is bounded away from $0$ and bounded above for any $t\geq 0$. From this, it is immediate to conclude.
\par
(iv)$\Rightarrow$(iii) and (v)$\Rightarrow$(iii): Once more, from \eqref{bounds}, for any $p\in P$ the semicocycle $c(t,p)$ is bounded above by a constant if (iv) holds, and is bounded below by a positive constant if (v) holds.   According to Theorem~\ref{teor-johnson} this can only happen if $h\in B(P\times \bar U)$.  The proof is finished.
\end{proof}
As for the dynamics when  $h\in \mathcal{U}(P\times \bar U)$, we state an oscillation result for $\tau_L$ .
\begin{teor}\label{teor-oscilacion}
Let $h\in \mathcal{U}(P\times \bar U)$. Then, there exists an invariant and residual set $P_{\rm{o}}\subset P$ such that for any $p\in P_{\rm{o}}$ there exist sequences $(t_n^1)_n, (t_n^2)_n\uparrow \infty$ depending on $p$, such that for any $z\in X$ with $z\gg 0$ it holds:
\[ \lim_{n\to \infty} \| \phi(t_n^1,p)\,z \|=0\quad\text{ and }\quad \lim_{n\to \infty} \displaystyle\left\|\frac{1}{ \phi(t_n^2,p)\,z} \right\|=0\,.
\]
\end{teor}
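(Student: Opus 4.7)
The plan is to reduce the operator-level oscillation statement to the scalar oscillation of the 1-dim cocycle $c(t,p)$ on the principal bundle, and then transfer the information via the sandwich estimate~\eqref{bounds} already established inside the proof of Theorem~\ref{teor-equivalencias}.

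\smallskip

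First, I would invoke Remark~\ref{remark-DE}: since $h\in \mathcal{U}(P\times\bar U)\subset C_0(P\times\bar U)$, the principal spectrum equals $\{0\}$, so the real linear skew-product flow $(t,p,y)\mapsto (p{\cdot}t,c(t,p)\,y)$ does not admit an exponential dichotomy; moreover, the defining condition of $\mathcal{U}(P\times\bar U)$ says that Proposition~\ref{prop-gott} fails. Hence Theorem~\ref{teor-johnson} applies and produces an invariant residual set $P_{\rm o}\subset P$ and, for each $p\in P_{\rm o}$, sequences $(t_n^1)_n,(t_n^2)_n\uparrow\infty$ (namely the sequences $(t_n^i)_n$ for $i=1,2$ of that theorem) with $c(t_n^1,p)\to 0$ and $c(t_n^2,p)\to\infty$.

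\smallskip

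The second step is to transfer these scalar limits to $\phi(t,p)\,z$ for an arbitrary $z\gg 0$. As in the proof of Theorem~\ref{teor-equivalencias}, continuity of $e:P\to X$ together with $e(p)\gg 0$, compactness of $P$ and $z_0\gg 0$ yield constants $c_1,C_1>0$ with $c_1\,z_0\leq e(p)\leq C_1\,z_0$ for every $p\in P$, and therefore the inequality~\eqref{bounds}:
\[
\frac{c_1}{C_1}\,c(t,p)\,z_0 \;\leq\; \phi(t,p)\,z_0 \;\leq\; \frac{C_1}{c_1}\,c(t,p)\,z_0,\qquad p\in P,\;t\geq 0.
\]
Given $z\gg 0$, choose $\alpha,\beta>0$ with $\alpha\,z_0\leq z\leq \beta\,z_0$ (possible because both vectors lie in $\Int X_+$); monotonicity of $\phi(t,p)$ (Proposition~\ref{prop-strong monot lineal}) then sandwiches $\phi(t,p)\,z$ between $\alpha\,(c_1/C_1)\,c(t,p)\,z_0$ and $\beta\,(C_1/c_1)\,c(t,p)\,z_0$.

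\smallskip

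The third step extracts the two conclusions from this sandwich. Taking the sup-norm in the upper bound at $t=t_n^1$ gives $\|\phi(t_n^1,p)\,z\|\leq \beta\,(C_1/c_1)\,\|z_0\|\,c(t_n^1,p)\to 0$. For the reciprocal statement, write $m_0:=\min_{x\in\bar U} z_0(x)>0$; the lower bound evaluated pointwise at $t=t_n^2$ yields $(\phi(t_n^2,p)\,z)(x)\geq \alpha\,(c_1/C_1)\,m_0\,c(t_n^2,p)$ for every $x\in\bar U$, so
\[
\Bigl\|\tfrac{1}{\phi(t_n^2,p)\,z}\Bigr\|=\frac{1}{\min_{x\in\bar U}(\phi(t_n^2,p)\,z)(x)}\leq \frac{C_1}{\alpha\,c_1\,m_0\,c(t_n^2,p)}\longrightarrow 0.
\]
The only non-routine point is the appeal to Theorem~\ref{teor-johnson} through Remark~\ref{remark-DE}; everything else is the standard sandwich between the cocycle $c(t,p)$ and the operator trajectory enforced by the continuous separation, so I do not expect a genuine obstruction.
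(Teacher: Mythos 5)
Your argument is correct and essentially identical to the paper's: both invoke Theorem~\ref{teor-johnson} to obtain the oscillation set $P_{\rm o}$ and the sequences $(t_n^1)_n,(t_n^2)_n$ for the scalar cocycle $c(t,p)$, and both then transfer the scalar limits to $\phi(t,p)\,z$ using the strong positivity and uniform sandwiching by the continuous-separation direction $e(p)$. The only cosmetic difference is that the paper sandwiches $z$ directly by $c_1\,e(p)\le z\le C_1\,e(p)$ and exploits $\|e(q)\|=1$, whereas you route through a fixed reference $z_0$ via~\eqref{bounds}.
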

\begin{proof}
Let $P_{\rm{o}}\subset P$ be the invariant and residual set determined in Theorem~\ref{teor-johnson} for the associated real cocycle $c(t,p)$.
Then, for each  $p\in P_{\rm{o}}$ there exist sequences $(t_n^1)_n, (t_n^2)_n\uparrow \infty$ depending on $p$ such that
\[
\lim_{n\to\infty} c(t_n^1,p) =0 \quad\text{and}\quad
\lim_{n\to\infty} c(t_n^2,p)  =\infty\,.
\]
\par
By the properties of $e(p)$, given $z\gg 0$, we can find constants $c_1, C_1>0$ such that $c_1\, e(p)\leq z\leq C_1\, e(p)$ for any $p\in P$.
Then, by relation~\eqref{c}, monotonicity of $\tau_L$ and monotonicity of the norm we get that $\| \phi(t_n^1,p)\,z \|\leq C_1 \,\| \phi(t_n^1,p)\,e(p) \|= C_1 \, c(t_n^1,p)\to 0$ as $n\to\infty$, and
\[
\displaystyle\left\|\frac{1}{\phi(t_n^2,p)\,z} \right\|\leq \frac{1}{c_1} \displaystyle\left\|\frac{1}{ \phi(t_n^2,p)\,e(p)} \right\|= \frac{1}{c_1 \,c(t_n^2,p)} \to 0 \quad\text{as } n\to\infty\,,
\]
as we wanted  to see.
\end{proof}
\par
Now, for $h\in C_0(P\times\bar U)$ we prove the existence of an invariant compact set in $P\times  X$, with a precise dynamical description depending on whether $h\in B(P\times \bar U)$ or $h\in \mathcal{U}(P\times \bar U)$. First, we give a definition. The operator $A$ below is the one defined in Section~\ref{sec-mild solutions}.
\begin{defi}\label{def solucion entera}
A solution $v:I\to X$ of the abstract equation
\begin{equation}\label{abstract eq}
v'(t)  =
 A\, v(t)+\tilde h(p{\cdot}t)\,v(t)\,,\quad t\in I\,,
\end{equation}
along the orbit of $p\in P$ is said to be an {\em entire solution} provided that $I=(-\infty,\infty)$. In that case, $v(t+s)=\phi(t,p{\cdot}s)\,v(s)$ for any $t\geq 0$ and $s\in \R$. An entire solution $v:(-\infty,\infty)\to X$ is said to be {\em negatively bounded} if $\{v(t)\mid t\leq 0\}\subset X$ is a bounded set.
\end{defi}
\begin{prop}\label{prop-compacto invariante}
Let $h\in C_0(P\times\bar U)$. Then,
the following items hold:
\begin{itemize}
\item[(i)] If $v:\R\to X$ is a negatively bounded solution of the abstract equation \eqref{abstract eq} along the orbit of $p_0\in P$, then $v(t)\in X_1(p_0{\cdot} t)$ for any $t\in \R$.
\item[(ii)] If $h\in B(P\times \bar U)$, then there exists a continuous map $\widehat e: P \to \Int X_+$ such that $\widehat e(p)\in X_1(p)$ for any $p\in P$ and  $\widehat e(p{\cdot} t)=\phi(t,p)\,\widehat e(p)$ for any $p\in P$ and $t\geq 0$. Besides, $K=\{(p,\widehat e(p))\mid p\in P\}$ is a minimal set which is a copy of the base $P$ and it is contained in $P\times \Int X_+$.
\item[(iii)]  If $h\in \mathcal{U}(P\times \bar U)$,  there exist a $p_0\in P$ and a bounded entire solution  $v:\R\to X_+\setminus \{0\}$ of~\eqref{abstract eq} along the orbit of $p_0$ such that $K_0=\cls \{(p_0{\cdot}t, v(t))\mid t\in \R\}$ is a pinched $\tau_L$-invariant compact set in $P\times (\Int X_+\cup \{0\})$.
\end{itemize}
\end{prop}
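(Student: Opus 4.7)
The plan is to tackle the three items separately: (i) is a structural consequence of the continuous separation, (ii) is a cohomological computation in the $B(P\times\bar U)$ regime, and (iii) is a compactness/limit construction in the $\mathcal{U}(P\times\bar U)$ regime.

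For (i), I decompose $v(s)=v_1(s)+v_2(s)\in X_1(p_0\cdot s)\oplus X_2(p_0\cdot s)$ using the continuous separation. The spectral projections depend continuously on the base point, so compactness of $P$ gives a uniform bound $\|v_2(s)\|\leq C\,\|v(s)\|\leq CR$ for $s\leq 0$, where $R$ bounds $\{v(s)\mid s\leq 0\}$. Invariance of the splitting together with property~(5) of the separation yields, for $\sigma>0$,
\[
\|v_2(0)\|=\|\phi(\sigma,p_0\cdot(-\sigma))\,v_2(-\sigma)\|\leq M\,e^{-\delta\sigma}\,c(\sigma,p_0\cdot(-\sigma))\,\|v_2(-\sigma)\|.
\]
Unique ergodicity together with $\lambda_P=0$ collapses the principal spectrum to $\{0\}$, so by Sacker--Sell the principal cocycle satisfies the uniform subexponential bound $c(\sigma,q)\leq K_\varepsilon e^{\varepsilon\sigma}$ for every $\varepsilon>0$ and $q\in P$. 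Choosing $\varepsilon<\delta$ forces the right-hand side to $0$, hence $v_2(0)=0$ and $v(0)\in X_1(p_0)$. The translated solution $v(\cdot+t)$ is still negatively bounded, and the same argument at each $t$ finishes the claim.

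For (ii), Proposition~\ref{prop-gott} (applicable because $h\in B(P\times\bar U)$) supplies $k\in C(P)$ with $\ln c(t,p)=k(p\cdot t)-k(p)$. Set $\widehat e(p):=e^{k(p)}\,e(p)$; this map is continuous, strongly positive, and lies in $X_1(p)=\langle e(p)\rangle$. The cocycle identity $\phi(t,p)\,e(p)=c(t,p)\,e(p\cdot t)$ then gives
\[
\phi(t,p)\,\widehat e(p)=e^{k(p)+\ln c(t,p)}\,e(p\cdot t)=e^{k(p\cdot t)}\,e(p\cdot t)=\widehat e(p\cdot t),
\]
so $K=\{(p,\widehat e(p))\mid p\in P\}\subset P\times\Int X_+$ is $\tau_L$-invariant. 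The first-coordinate projection is a homeomorphism from $K$ onto $P$ conjugating $\tau_L|_K$ with the minimal flow $(P,\theta,\R)$, so $K$ is a minimal copy of the base.

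For (iii), I build $p_0$ and $v$ via a compactness argument based on Theorems~\ref{teor-johnson} and~\ref{teor-shneiberg}. Pick $\hat p$ in the intersection of the $\nu$-full-measure set supplied by Theorem~\ref{teor-shneiberg} (along which there are renewal times $t_n\uparrow\infty$ with $c(t_n,\hat p)=1$) with the residual oscillation set $P_{\rm o}$ from Theorem~\ref{teor-johnson}. The points $(\hat p\cdot t_n,e(\hat p\cdot t_n))$ lie on the unit sphere of the principal bundle, so compactness of $P$ and continuity of $e$ extract a subsequence with $\hat p\cdot t_n\to p_0$ and $e(\hat p\cdot t_n)\to e(p_0)$. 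Define $v(t):=c(t,p_0)\,e(p_0\cdot t)$, the unique entire continuation in the principal bundle through $(p_0,e(p_0))$ (uniqueness being forced by item (i)). A refined choice of the $t_n$ so that the local maxima of $c(\cdot,\hat p)$ on each window $[t_n,t_{n+1}]$ are uniformly controlled, coupled with the identity $c(s,\hat p\cdot t_n)=c(s+t_n,\hat p)$, transfers a uniform bound $\sup_{t\in\R}c(t,p_0)\leq C$ to the limit, making $v$ globally bounded. Since $h\in\mathcal{U}(P\times\bar U)$, Proposition~\ref{prop-gott} rules out $\inf_{t\in\R}c(t,p_0)>0$; hence $0\in\cls\{v(t)\mid t\in\R\}$, the accumulated fibre-zero points $(q,0)$ have trivial $\tau_L$-orbit on the zero section, and $K_0\subset P\times(\Int X_+\cup\{0\})$ is compact and $\tau_L$-invariant. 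Pinchedness follows because on the invariant residual set $P_{\rm o}$ the oscillation in both time directions forces the fibre of $K_0$ over $p\in P_{\rm o}$ to reduce to a single positive point together with $(p,0)$, while off $P_{\rm o}$ one picks up additional positive accumulation points. The main obstacle I foresee is precisely this construction of $p_0$ with a \emph{globally} bounded principal cocycle rather than one bounded on a single half-line; the coupling of Shneiberg-type renewal times with the Johnson oscillation structure is what makes it possible, and this is where the aperiodicity of the base flow is genuinely used.
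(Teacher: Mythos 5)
Your proofs of (i) and (ii) follow essentially the same route as the paper: for (i) you decompose via the continuous separation, bound the $X_2$-component using property~(5) together with the subexponential estimate on the principal cocycle forced by $\Sigma_{\rm pr}(\tau_L)=\{0\}$, and conclude; for (ii) you produce $\widehat e$ as $e^{k}e$ via Proposition~\ref{prop-gott} exactly as the paper does. Both are fine.

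Part (iii) is where you depart from the paper, and there is a genuine gap. The paper obtains the point $p_0$ with a nonzero bounded orbit by invoking Selgrade's theorem~\cite{selg} for the real skew-product flow $\pi(t,p,y)=(p{\cdot}t,c(t,p)\,y)$, which has no exponential dichotomy by Remark~\ref{remark-DE}. You instead pick $\hat p\in P_{\rm o}$ and try to extract $p_0$ as a limit of $\hat p{\cdot}t_n$ along Shneiberg renewal times, promising a ``refined choice'' of $t_n$ that controls the local maxima of $c(\cdot,\hat p)$ on the windows $[t_n,t_{n+1}]$. This cannot work as stated: for $\hat p\in P_{\rm o}$ one has, by Theorem~\ref{teor-johnson}, $\sup_{t\geq 0}c(t,\hat p)=\infty$, so since $t_n\uparrow\infty$ the quantities $\max_{[t_n,t_{n+1}]}c(\cdot,\hat p)$ are forced to blow up. Transferring a uniform bound to the limit then needs a genuine renormalization argument (shifting to where the running maximum is attained and dividing), which is precisely the content of Selgrade's theorem; your sketch does not supply it, and starting from a point of $P_{\rm o}$ is actually the worst possible choice for this purpose. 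I would either cite Selgrade directly, as the paper does, or else carry out the renormalization construction in full.

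There is also an error in your pinchedness description: you assert that over $p\in P_{\rm o}$ the fibre of $K_0$ ``reduces to a single positive point together with $(p,0)$''. That would be two points and would contradict the definition of a pinched set. The correct statement, which the paper gives, is that for $p\in P_{\rm o}$ the fibre is exactly $\{0\}$: any nonzero $(p,y)\in K_0$ would have an orbit $\{(p{\cdot}t,c(t,p)\,y)\}$ contained in the compact set $K_0$, which is impossible because for $p\in P_{\rm o}$ the cocycle $c(t,p)$ oscillates to $\infty$. The singleton fibres occur over $P_{\rm o}$; the extra positive points appear only off $P_{\rm o}$, for instance over $p_0$.
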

\begin{proof}
By the invariance of the 1-dim principal bundle, to prove (i) it suffices to check that $v(0)\in X_1(p_0)$. The  continuous variation with respect to $p\in P$ of the projections $\Pi_{1,p}:X\to X_1(p)$,  $\Pi_{2,p}:X\to X_2(p)$ implies that there exist $\rho_1,\, \rho_2>0$ such that $\|\Pi_{1,p}\|\leq \rho_1$ and $\|\Pi_{2,p}\|\leq \rho_2$ for any $p\in P$. Let $r=\sup\{\|v(t)\|\mid t\leq 0\}$. If we write $v(t)=z_1(t)+z_2(t)\in X_1(p_0{\cdot}t)\oplus X_2(p_0{\cdot}t)$ for any $t\in \R$, then $\|z_1(t)\|\leq \rho_1 r$ and $\|z_2(t)\|\leq \rho_2 r$ for any $t\leq 0$. Besides, $\phi(t,p_0{\cdot}(-t))\,z_1(-t)=z_1(0)$ and $\phi(t,p_0{\cdot}(-t))\,z_2(-t)=z_2(0)$ for any $t\geq 0$. Then, applying property~(5) in the definition of  continuous separation,
\[
\|z_2(0)\|=\|\phi(t,p_0{\cdot}(-t))\,z_2(-t)\|\leq \|z_2(-t)\|\,M\,e^{-\delta\,t} \|\phi(t,p_0{\cdot}(-t))\,e(p_0{\cdot}(-t))\|\,.
\]
Since $\Sigma_{\text{pr}}(L)=\{\lambda_P(h)\}=\{0\}$, given $0<\lambda< \delta$, there is an exponential dichotomy with full stable subspace for the 1-dim semiflow $e^{-\lambda t}\,\phi(t,p) |_{X_1}$, that is, given $\varepsilon>0$ there exists a $t_0$ such that $
\|\phi(t,p_0{\cdot}(-t))\,e(p_0{\cdot}(-t))\|\leq \varepsilon\, e^{\lambda t}$ for any $t\geq t_0$. Therefore, we can easily deduce that $\|z_2(0)\|=0$, so that $v(0)\in X_1(p_0)$.
\par
Now assume that $h\in B(P\times \bar U)$. Then, by Proposition~\ref{prop-gott} there exists a function $k\in C(P)$ such that $k(p{\cdot}t)-k(p)=\ln c(t,p)$ for any  $p\in P$, $t\in\R$,
whose exponential $\kappa:P\to \R_+$, $\kappa(p)=\exp k(p)$ is positive and satisfies
\begin{equation*}
\kappa(p{\cdot}t)=\kappa(p)\,c(t,p)\;\; \text{for any}\;\;p\in P ,\; t\in \R\,.
\end{equation*}
Now define the continuous map $\widehat e: P \to \Int X_+$, $p\mapsto \kappa(p)\,e(p)$.
According with~\eqref{c} and the previous formula, for any $t\geq 0$ and any $p\in P$ this map satisfies
\[
\phi(t,p)\,\widehat e(p)=\kappa(p)\,\phi(t,p)\,e(p) =\kappa(p)\,c(t,p)\,e(p{\cdot}t)=\kappa(p{\cdot}t)\,e(p{\cdot}t)=\widehat e(p{\cdot}t)\,.
\]
In other words, it defines a continuous equilibrium for the linear skew-product semiflow $\tau_L$. As a consequence, the set $K=\{(p,\widehat e(p))\mid p\in P\}$ is a minimal set in $P\times\Int X_+$  with the simplest possible structure,  and (ii) is proved.
\par
Finally, let us assume that $h\in \mathcal{U}(P\times \bar U)$. Then, as explained in Remark~\ref{remark-DE}, the associated real linear skew-product flow $\pi:\R\times P\times\R\to P\times\R$, $\pi(t,p,y)=(p{\cdot}t,c(t,p)\,y)$ does not have an exponential dichotomy. In this case, a result by Selgrade~\cite{selg} says that there exists a $p_0\in P$ for which there is a nonzero bounded orbit, that is, $\{c(t,p_0)\mid t\in\R\}$ is bounded in $\R_+$. In this situation we claim that the set $K=\cls\{(p_0{\cdot}t, c(t,p_0))\mid t\in \R\}$ is an invariant compact set in $P\times \R$ for $\pi$ with a pinched structure (this is a generalization of Lemma~14 in Caraballo et al.~\cite{caloNonl}). The fact that $K$ is invariant and compact is clear. In particular, for any $p\in P$ there is at least one pair $(p,y)\in K$. Now, let $P_{\rm{o}}$ be the oscillation set of $c(t,p)$ given in Theorem~\ref{teor-johnson}. Then, for every $p\in P_{\rm{o}}$ the only pair in $K$ is $(p,0)$, as if there were a pair $(p,y)$ with $y\not=0$, the orbit of $(p,y)$ would remain in $K$ but this cannot happen because of its oscillating behaviour. Since $P$ is minimal and $0$ determines an orbit, in fact $(p,0)\in K$ for any $p\in P$. Finally $(p_0,1)\in K$, so that we have proved that $K$ has a pinched structure.
\par
At this point, the entire solution along the trajectory of $p_0$ defined by  $v:\R\to X_+$, $t\mapsto v(t)= c(t,p_0)\,e(p_0{\cdot}t)$ is bounded and  the set $K_0=\cls\{(p_0{\cdot}t, v(t))\mid t\in \R\}$ is an invariant compact set in $P\times (\Int X_+\cup \{0\})$ which is homeomorphic to $K$ and thus has a pinched structure. The proof is finished.
\end{proof}
For the sake of completeness, we collect the fundamental properties of the set $K_0$ in the third item of the previous proposition.
\begin{coro}
Let $h \in \mathcal{U}(P\times\bar U)$. Then, the pinched invariant compact set $K_0\subset P\times (\Int X_+\cup \{0\})$ given in Proposition~$\ref{prop-compacto invariante}~\rm{(iii)}$  satisfies:
\begin{itemize}
  \item[(a)]  $(p_0,e(p_0))\in K_0$.
  \item[(b)] $(p,0)\in K_0$ for any $p\in P$.
  \item[(c)] $(p,0)$ is the only element in $K_0$ if  $p\in P_{\rm{o}}$, the oscillation set of the associated $1$-dim linear cocycle $c(t,p)$.
\end{itemize}
\end{coro}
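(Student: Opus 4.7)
The plan is to inherit all three properties of $K_0$ from the analogous properties of the pinched compact set $K=\cls\{(p_0{\cdot}t,c(t,p_0))\mid t\in\R\}\subset P\times\R_+$ already extracted in the proof of Proposition~\ref{prop-compacto invariante}~(iii). The bridge is the continuous map
\[
\Psi:K\longrightarrow P\times X\,,\quad (p,y)\longmapsto (p,y\,e(p))\,,
\]
which is well-defined and continuous because $e:P\to\Int X_+$ is continuous, and whose image agrees with $K_0$ since $\Psi(p_0{\cdot}t,c(t,p_0))=(p_0{\cdot}t,v(t))$ and $\Psi(K)$, being compact, contains the orbit of $(p_0,e(p_0))$ and hence equals its closure $K_0$.

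Properties (a) and (b) then follow very quickly. For (a), evaluating the entire solution $v$ at $t=0$ gives $v(0)=c(0,p_0)\,e(p_0)=e(p_0)$, so $(p_0,e(p_0))$ already lies in the orbit whose closure defines $K_0$. For (b), from the proof of Proposition~\ref{prop-compacto invariante}~(iii) one has $(p,0)\in K$ for every $p\in P$, and pushing forward through $\Psi$ yields $(p,0\,e(p))=(p,0)\in K_0$.

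Only (c) requires a small extra argument. Given $(p,z)\in K_0$ with $p\in P_{\rm o}$, I would pick $t_n\in\R$ with $p_0{\cdot}t_n\to p$ and $v(t_n)=c(t_n,p_0)\,e(p_0{\cdot}t_n)\to z$ in $X$, and then evaluate at any fixed point $x_0\in\bar U$. Since $e(p)\gg 0$ forces $e(p)(x_0)>0$, continuity of $e$ gives $e(p_0{\cdot}t_n)(x_0)\to e(p)(x_0)>0$, so the scalar sequence $c(t_n,p_0)=v(t_n)(x_0)/e(p_0{\cdot}t_n)(x_0)$ converges to some $c^*\geq 0$, and closedness of $K$ forces $(p,c^*)\in K$. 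The pinched structure of $K$ at $p\in P_{\rm o}$ then yields $c^*=0$, whence $z=c^*\,e(p)=0$. The main delicate point, rather than a real obstacle, is this evaluation trick, which upgrades the $X$-convergence $v(t_n)\to z$ to genuine scalar convergence of $c(t_n,p_0)$ and so transports the pinched structure of $K$ onto $K_0$.
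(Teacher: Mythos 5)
Your proof is correct and follows essentially the same route as the paper: the paper itself asserts, at the end of the proof of Proposition~\ref{prop-compacto invariante}~(iii), that $K_0$ is homeomorphic to $K$ and thus inherits its pinched structure, which is exactly the content of your map $\Psi\colon (p,y)\mapsto (p,y\,e(p))$. Your argument merely makes that homeomorphism explicit and fills in the transfer of the pinched structure to $K_0$ via the pointwise evaluation trick.
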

After the dynamical description of $\tau_L$, depending on whether the coefficient map $h$ is in $B(P\times \bar U)$ or in   $\mathcal U(P\times \bar U)$, we wonder which the  topological size of these sets is. Before we move on, we make a remark which will let us play with an additional term in the equations, providing a technical tool in this paper. Note that, if $h\in C(P\times\bar U)$ and $k\in C(P)$, then $h+k\in C(P\times\bar U)$ and the linear parabolic problem for  $h+k$ given by
\begin{equation}\label{pdefamily h+k}
\left\{\begin{array}{l} \des\frac{\partial y}{\partial t}  =
 \Delta \, y+h(p{\cdot}t,x)\,y+k(p{\cdot}t)\,y\,,\quad t>0\,,\;\,x\in U, \;\, \text{for each}\; p\in P,\\[.2cm]
By:=\alpha(x)\,y+\des\frac{\partial y}{\partial n} =0\,,\quad  t>0\,,\;\,x\in \partial U,\,
\end{array}\right.
\end{equation}
admits the same treatment as the one developed for~\eqref{pdefamily}. Actually, the linear skew-product semiflow $\wit \tau_L$ associated with~\eqref{pdefamily h+k}, which is given by
\begin{equation*}
\begin{array}{cccl}
\wit \tau_L: & \R_+\times P\times X& \longrightarrow & \hspace{0.3cm}P\times X\\
 & (t,p,z) & \mapsto
 &(p{\cdot}t,\wit \phi(t,p)\,z)=(p{\cdot}t,e^{\int_0^t k(p{\cdot}s)\,ds}\phi(t,p)\,z)\,,
\end{array}
\end{equation*}
admits a continuous separation sharing the principal bundle of $\tau_L$. Clearly, the associated 1-dim linear cocycle $\wit c(t,p)$ satisfying  $\wit \phi(t,p)\,e(p) = \wit c(t,p)\,e(p{\cdot}t)$ for $t\geq 0$, $p\in P$ is given by
\begin{equation}\label{ctilde}
\wit c(t,p)= c(t,p)\,\exp \int_0^t k(p{\cdot}s)\,ds\,.
\end{equation}
\par
At this point, note that given $k\in C(P)$, if we consider the 1-dim linear cocycle
\[
 \wit c(t,p)=\exp \int_0^t k(p{\cdot}s)\,ds\,,\quad t\geq 0\,, \;p\in P,
\]
we can easily build a problem \eqref{pdefamily h+k} for which $ \wit c(t,p)$ is the associated 1-dim cocycle. One just has to consider $h\equiv \gamma_0$ ($\gamma_0\geq 0$) the first eigenvalue of the boundary value  problem~\eqref{bvp} with associated  eigenfunction $e_0\in X$, with $e_0\gg 0$ and $\|e_0\|=1$.  In this case, the principal bundle in the induced linear skew-product semiflow is just given by $e(p)=e_0$ for any $p\in P$.
\par
We next collect some easy facts for the terms of the type $h+k$. Recall that relation \eqref{rep int exponente} gives an integral representation of the upper Lyapunov exponent.
\begin{prop}\label{prop-h+k}
The following items hold:
\begin{itemize}
\item[(i)]  If $h\in C(P\times\bar U)$ and $k\in C(P)$, then $h+k\in C(P\times\bar U)$ and
\[
\lambda_P(h+k)=\lambda_P(h)+\int_P k\,d\nu\,.
\]
\item[(ii)] If $h\in C_0(P\times\bar U)$ and $k\in C_0(P)$, then $h+k\in C_0(P\times\bar U)$.
\item[(iii)] If $h\in B(P\times \bar U)$  and $k\in B(P)$, then $h+k\in B(P\times \bar U)$.
\item[(iv)] If $h\in B(P\times \bar U)$ and $ k\in \mathcal{U}(P)$, or if  $h\in \mathcal{U}(P\times \bar U)$ and $k\in B(P)$,  then $h+k\in  \mathcal{U}(P\times \bar U)$.
\end{itemize}
\end{prop}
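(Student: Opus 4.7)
The proof plan rests on the explicit formula \eqref{ctilde} relating the two $1$-dim cocycles: $\wit c(t,p) = c(t,p)\exp\int_0^t k(p\cdot s)\,ds$, which after taking logarithms gives the key additive identity
\[
\ln \wit c(t,p) = \ln c(t,p) + \int_0^t k(p\cdot s)\,ds\,.
\]
All four items will fall out of this identity combined with unique ergodicity of $(P,\theta,\R)$.

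For item (i), I would divide the displayed identity by $t$ and take $t\to\infty$. Since the flow on $P$ is minimal and uniquely ergodic, Birkhoff's ergodic theorem applied to the continuous function $k$ yields $(1/t)\int_0^t k(p\cdot s)\,ds \to \int_P k\,d\nu$ (this convergence actually holds uniformly in $p$ in the uniquely ergodic setting, but pointwise suffices since, as noted in the discussion after~\eqref{rep int exponente}, the Lyapunov exponent $\lim_{t\to\infty}\ln c(t,p)/t$ exists and equals $\lambda_P(h)$ for every $p\in P$). The limit of the left-hand side is $\lambda_P(h+k)$ by the same reasoning applied to the cocycle $\wit c$. Item (ii) is then an immediate corollary: $k\in C_0(P)$ means $\int_P k\,d\nu=0$, hence (i) gives $\lambda_P(h+k)=\lambda_P(h)=0$.

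For item (iii), the additive identity and the triangle inequality give, for every $p\in P$,
\[
\sup_{t\in\R}|\ln \wit c(t,p)| \le \sup_{t\in\R}|\ln c(t,p)| + \sup_{t\in\R}\Bigl|\int_0^t k(p\cdot s)\,ds\Bigr| < \infty\,,
\]
where the two summands are finite by the hypotheses $h\in B(P\times\bar U)$ and $k\in B(P)$, respectively. Combined with (ii) (which ensures $h+k\in C_0(P\times\bar U)$), this places $h+k$ in $B(P\times\bar U)$.

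Item (iv) follows by a contradiction argument exploiting the very same identity, rearranged as $\int_0^t k(p\cdot s)\,ds = \ln\wit c(t,p) - \ln c(t,p)$. If $h\in B(P\times\bar U)$ and $k\in\mathcal U(P)$ but $h+k\in B(P\times\bar U)$, then both $\sup_{t\in\R}|\ln c(t,p)|$ and $\sup_{t\in\R}|\ln \wit c(t,p)|$ would be finite for every $p$, forcing the primitive of $k$ to be bounded and contradicting $k\in\mathcal U(P)$. The symmetric case $h\in\mathcal U(P\times\bar U)$, $k\in B(P)$ is analogous. The only mild subtlety is ensuring the $C_0$ membership of $h+k$ before invoking the $B$/$\mathcal U$ dichotomy, but that is exactly what (ii) provides, so no real obstacle arises.
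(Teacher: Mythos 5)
Your proof is correct, and parts (ii)--(iv) follow the same route as the paper's (which for (iii) and (iv) simply says ``argue from~\eqref{ctilde}''; you spell out the straightforward triangle-inequality and contradiction steps). The genuine difference lies in item (i). The paper computes the upper Lyapunov exponent through the \emph{spatial-average} representation $\lambda_P=\int_P \ln c(1,p)\,d\nu$ from~\eqref{rep int exponente}: it writes $\lambda_P(h+k)=\int_P\ln\wit c(1,p)\,d\nu$, expands via~\eqref{ctilde}, and then applies Fubini's theorem together with $\theta$-invariance of $\nu$ to show $\int_P\int_0^1 k(p{\cdot}s)\,ds\,d\nu=\int_P k\,d\nu$. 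You instead use the \emph{time-average} representation: divide the logarithmic identity by $t$, send $t\to\infty$, and invoke the facts (recorded just after~\eqref{rep int exponente}) that in the uniquely ergodic setting $\lambda_P=\lim_{t\to\infty}\ln c(t,p)/t$ exists as a true limit for every $p\in P$, and Birkhoff's theorem gives $\lim_{t\to\infty}\frac1t\int_0^t k(p{\cdot}s)\,ds=\int_P k\,d\nu$. Both arguments are valid; yours is arguably more elementary, since it avoids Fubini and leans directly on the pointwise ergodic limit the paper has already established, at the cost of invoking the existence of the limit for every $p$ rather than only the integral formula.
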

\begin{proof}
To prove the formula in (i) we use relations \eqref{rep int exponente} and \eqref{ctilde} to get
\begin{align*}
\lambda_P(h+k)&=\int_P\ln \wit c(1,p)\,d\nu=\int_P\ln  c(1,p)\,d\nu+\int_P\int_0^1 k(p{\cdot}s)\,ds\,d\nu \\&= \lambda_P(h)+ \int_0^1 \int_P k(p{\cdot}s)\,d\nu\,ds=\lambda_P(h)+ \int_P k\,d\nu\,,
\end{align*}
where we have applied Fubini's theorem, and the invariance of the measure $\nu$.
\par
Clearly, (ii) follows from (i). For (iii) and (iv) we once more argue from \eqref{ctilde} which means that  for any $t\in\R$ and any $p\in P$,
 \[
 \ln \wit c(t,p)= \ln c(t,p) +\int_0^t k(p{\cdot}s)\,ds\,.
 \]
The proof is finished.
\end{proof}
The properties of the decomposition $C_0(P)=B(P)\cup\mathcal{U}(P)$ can be transferred to the space $C_0(P\times\bar U)$ in the following sense.
\begin{teor}\label{teor-categoria}
Consider the complete metric space $C_0(P\times\bar U)$. Then $C_0(P\times\bar U)=B(P\times \bar U)\,\cup\, \mathcal{U}(P\times \bar U)$ where the union is disjoint and:
\begin{itemize}
\item[(i)] $B(P\times \bar U)$ is a dense set of first category in $C_0(P\times\bar U)$.
\item[(ii)] $\mathcal{U}(P\times \bar U)$ is a residual set in $C_0(P\times\bar U)$.
\end{itemize}
\end{teor}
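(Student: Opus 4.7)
The plan is to prove (ii) first by a Baire category argument, from which the first-category part of (i) follows immediately, and then to establish density of $B(P\times\bar U)$ separately by combining a regularization of the coefficient along orbits with the known density of $B(P)$ in $C_0(P)$.

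For residuality of $\mathcal U(P\times\bar U)$, for each $n\geq1$ I would set
\begin{equation*}
G_n=\{h\in C_0(P\times\bar U)\mid |\ln c_h(t,p)|>n\;\text{for some}\;t\in\R,\,p\in P\}.
\end{equation*}
Openness of $G_n$ follows from the continuous dependence of the cocycle $c_h$ on the coefficient $h$, a byproduct of the continuous dependence of mild solutions already used in Proposition~\ref{prop-continua}. For density: given $h_0\in C_0(P\times\bar U)$ and $\varepsilon>0$, if $h_0\in\mathcal U(P\times\bar U)$ then $h_0\in G_n$ by Proposition~\ref{prop-gott}; otherwise $h_0\in B(P\times\bar U)$ and, using that $\mathcal U(P)$ is residual (hence dense) in $C_0(P)$, I pick $k\in\mathcal U(P)$ with $\|k\|<\varepsilon$, and Proposition~\ref{prop-h+k}(iv) gives $h_0+k\in\mathcal U(P\times\bar U)\subseteq G_n$. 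Applying Proposition~\ref{prop-gott} once more, $\bigcap_nG_n=\mathcal U(P\times\bar U)$, so $\mathcal U(P\times\bar U)$ is residual and its complement $B(P\times\bar U)$ is of first category.

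For density of $B(P\times\bar U)$, given $h\in C_0(P\times\bar U)$ and $\varepsilon>0$, I would follow a three-step construction. First, apply the Schwartzman-type regularization along orbits used in the proof of Proposition~\ref{prop-convex} to produce $\widetilde h$ with $\|\widetilde h-h\|<\varepsilon/4$ such that $\widetilde h(p\cdot t,x)$ is of class $C^1$ in $t$; after subtracting the constant $\lambda_P(\widetilde h)$, whose size is controlled by $\|\widetilde h-h\|$ through the continuity of $\lambda_P$ (Proposition~\ref{prop-continua}), I may assume $\widetilde h\in C_0(P\times\bar U)$ and $\|\widetilde h-h\|<\varepsilon/2$. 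Second, for such a smooth-along-the-flow $\widetilde h$ the cocycle $c_{\widetilde h}(t,p)$ is of class $C^1$ in $t$ by parabolic regularity, and differentiating the cocycle relation at $t=0$ yields
\begin{equation*}
\ln c_{\widetilde h}(t,p)=\int_0^t k(p\cdot s)\,ds,\qquad k(p):=\left.\frac{\partial}{\partial t}\ln c_{\widetilde h}(t,p)\right|_{t=0}\in C(P),
\end{equation*}
with $\int_P k\,d\nu=\lambda_P(\widetilde h)=0$ by the integral representation \eqref{rep int exponente} and Fubini, so $k\in C_0(P)$. Third, by the density of $B(P)$ in $C_0(P)$ recalled just before the statement, I pick $b\in B(P)$ with $\|b-k\|<\varepsilon/2$ and set $h^*:=\widetilde h+(b-k)$; Proposition~\ref{prop-h+k}(ii) gives $h^*\in C_0(P\times\bar U)$, and by the cocycle identity \eqref{ctilde} combined with the integral representation above one has $\ln c_{h^*}(t,p)=\int_0^t b(p\cdot s)\,ds$, which is bounded in $t$ for every $p$ since $b\in B(P)$; hence $h^*\in B(P\times\bar U)$ with $\|h^*-h\|<\varepsilon$.

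The hard part will be the second step: making precise the claim that the Schwartzman regularization produces a cocycle that is differentiable in $t$, so that the integral-of-generator representation of $\ln c_{\widetilde h}$ holds. This requires the parabolic smoothing effect (coefficients of class $C^1$ in $t$ produce mild solutions that inherit time regularity along the principal bundle) together with a careful use of the continuous separation of Pol\'a\v cik--Tere\v s\v c\'ak and Shen--Yi to differentiate through the one-dimensional restriction. Once this reduction to the scalar generator $k\in C_0(P)$ is in place, the density of $B(P)$ in $C_0(P)$ effectively replaces the PDE problem by its ODE analogue on $P$. Note that a direct small perturbation of $h\in\mathcal U(P\times\bar U)$ by elements of $B(P)$ alone would be ineffective by Proposition~\ref{prop-h+k}(iv), so the regularization step is unavoidable.
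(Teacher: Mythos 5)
Your approach is structurally the same as the paper's (a Baire category decomposition plus a Schwartzman-regularization argument for density), and the overall plan is sound, but two of the concrete steps are stated in a form that does not quite work.

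First, the openness of $G_n=\{h\mid |\ln c_h(t,p)|>n \text{ for some } t,p\}$ is not a ``byproduct of continuous dependence of mild solutions.'' The cocycle $c_h(t,p)$ is defined through the continuous separation by $\phi_h(t,p)\,e_h(p)=c_h(t,p)\,e_h(p{\cdot}t)$ with $\|e_h(p{\cdot}t)\|=1$, so it depends on $h$ also through the leading direction $e_h(p)$. Continuity of $h\mapsto e_h$ (robustness of the continuous separation under perturbation of the coefficient) is a genuinely deeper fact that is nowhere established in the paper. This is precisely why the paper does not work with $c_h$ directly: it fixes once and for all a $p\in P$ and a $z_0\gg 0$ and defines $B_n=\{h\in B(P\times\bar U)\mid \frac1n z_0\leq \phi_h(t,p)z_0\leq n z_0\text{ for all }t\geq 0\}$, proving $B_n$ closed from the $h$-continuity of $\phi_h(t,p)z_0$ (which follows from the approximation scheme in the proof of Theorem~\ref{teor-comparacion}), and then showing each $B_n$ has empty interior using the same $\mathcal U(P)$-perturbation trick you use. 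Your argument is saveable by replacing the condition $|\ln c_h(t,p)|>n$ with an equivalent condition on $\phi_h(t,p)z_0$ against $z_0$, invoking Theorem~\ref{teor-equivalencias}, but as written the openness claim is unjustified.

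Second, and more importantly, the density step contains a false intermediate claim. You assert that for $\widetilde h$ smooth along orbits, $\ln c_{\widetilde h}(t,p)=\int_0^t k(p{\cdot}s)\,ds$ with $k\in C_0(P)$, i.e.\ that $c_{\widetilde h}(t,p)$ is $C^1$ in $t$ ``by parabolic regularity.'' This is exactly what the paper warns against: $c(t,p)=\|\phi(t,p)\,e(p)\|$ involves a sup-norm, and the sup-norm of a smooth curve in $C(\bar U)$ need not be differentiable in $t$, even when the mild solutions are classical. The paper circumvents this by fixing a point $x_0\in U$, setting $z_1(p)=e(p)/e(p)(x_0)$, and using the \emph{pointwise} cocycle $c_1(t,p)=v(t,p,z_1(p))(x_0)$, which is $C^1$ in $t$ under the Schwartzman regularity and satisfies $\ln c_1(t,p)=\int_0^t a(p{\cdot}s)\,ds$. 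The relation back to $c$ is $c_1(t,p)=\dfrac{e(p{\cdot}t)(x_0)}{e(p)(x_0)}\,c(t,p)$, which differs from $c$ by a factor bounded between two positive constants; this is what makes the final boundedness of $\ln\widetilde c_n(t,p)$ go through after perturbing by $a+k_n\in B(P)$. You correctly flagged the second step as the hard one, but ``parabolic regularity plus careful use of the continuous separation'' does not repair the norm's lack of differentiability; the actual fix is to pass to the cohomologous pointwise cocycle, and your integral formula for $\ln c_{\widetilde h}$ holds only up to this bounded cohomology term. (The normalization you add, subtracting $\lambda_P(\widetilde h)$ to stay in $C_0(P\times\bar U)$, is a small point the paper glosses over, and you handled it correctly.)
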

\begin{proof}
The union is disjoint by the definition, and (ii) follows from (i). To see that $B(P\times \bar U)$ is of first category, let us fix a $p\in P$ and a vector $z_0\gg 0$ in $X$  and  define the sets
\[
B_n=\left\{ h\in B(P\times \bar U) \;\Big|\;\frac{1}{n}\,z_0 \leq  \phi(t,p)\,z_0\leq n\,z_0 \;\,\text{for any}\,\;t\geq 0 \right\},\quad n\geq 1\,.
\]
By Theorem~\ref{teor-equivalencias} (iv)-(v), given $h\in B(P\times \bar U)$ it is clear that $h\in B_n$ for $n$ large enough. Therefore, $B(P\times \bar U)=\cup_{n=1}^\infty B_n$. Next we check that each $B_n$ is a closed set with an empty interior, so that $B_n$ is a nowhere dense set and we are done.
\par
Let us fix an $n\geq 1$ and consider a sequence $(h_j)_j\subset B_n$ with $h_j\to h_0\in C_0(P\times\bar U)$ as $j\to \infty$. For each $j\geq 0$, let $\phi_j(t,p)$ be the associated linear cocycle for $h_j$. Since for every $j\geq 1$, $\frac{1}{n}\,z_0 \leq \phi_j(t,p)\,z_0\leq n\,z_0$ for any $t\geq 0$, it follows  that also $\frac{1}{n}\,z_0 \leq \phi_0(t,p)\,z_0\leq n\,z_0$ for any $t\geq 0$ (for this convergence result, see the proof of Theorem~\ref{teor-comparacion}). Then Theorem~\ref{teor-equivalencias} asserts that $h_0\in B_n$, and it is closed.
\par
About the empty interior, let us argue by contradiction and let us assume that for some $n_0\geq 1$ there exists an $h_0\in \Int B_{n_0}$. Since $\mathcal{U}(P)$ is dense in $C_0(P)$, there is a sequence $(k_j)_j\subset \mathcal{U}(P)$ with $\|k_j\|\leq 1/j$ for any $j\geq 1$. Note that by Proposition~\ref{prop-h+k}, $h_0+k_j\in \mathcal{U}(P\times\bar U)$ for any $j\geq 1$ and $\lim_{j\to\infty} h_0+k_j=h_0$, which is a contradiction.
\par
Finally, to see that $B(P\times \bar U)$ is dense in $C_0(P\times\bar U)$, once more using a result by Schwartzman~\cite{schw} it suffices to see that any map in $C_0(P\times\bar U)$ which is of class $C^1$ in $U$ and of class $C^{1\!}$ along the orbits in $P$  can be approximated by a sequence of maps in $B(P\times \bar U)$.  So take such a regular map $h$ in $\mathcal{U}(P\times\bar U)$. The advantage is that one can associate a 1-dim  cocycle $c_1(t,p)$ to this $h$ with the same behaviour, referring to boundedness, as that of $c(t,p)$, which is further differentiable. More precisely, note that in principle $c(t,p)=\|\phi(t,p)\,e(p)\|$ might not be always differentiable. However, by fixing a point $x_0\in U$ and taking
\begin{equation*}
z_1(p)=\frac{e(p)}{e(p)(x_0)}\in X\,,\quad p\in P,
\end{equation*}
it is not difficult to check that  $\phi(t,p)\,z_1(p)=c_1(t,p)\,z_1(p{\cdot}t)$ for the positive coefficient
\[
c_1(t,p)=v(t,p,z_1(p))(x_0)\quad \text{for any } p\in P,\; t\geq 0\,,
\]
which defines a 1-dim differentiable linear  cocycle: with the regularity conditions on $h$,
$y(t,x)= v(t,p,z_1(p))(x)$ is a classical solution of the IBV problem given by~\eqref{pdefamily} for $p\in P$ with $y(0,x)=z_1(p)(x)$, $x\in \bar U$. Therefore, the map $a(p):=\left.\frac{d}{dt} \ln c_1(t,p)\right|_{t=0}$ is well defined and continuous on $P$ and
\[
 c_1(t,p)=\exp\int_0^t a(p{\cdot}s)\,ds\,,\quad p\in P,\; t\geq 0\,.
 \]
 \par
Note that the relation between $c(t,p)$ and  $c_1(t,p)$ is given by
\[
c_1(t,p)=\frac{e(p{\cdot}t)(x_0)}{e(p)(x_0)}\,c(t,p)\,,\quad p\in P,\; t\geq 0\,,
\]
and there exist constants $c_0,\,C_0>0$ such that $c_0\leq e(p)(x_0)\leq C_0$ for any $p\in P$.
Besides, as commented in the second to last paragraph in the proof of Proposition~\ref{prop-convex}, for any $p\in P$ there exists the limit
\[
\lambda_P=\lim_{t\to\infty} \frac{\ln\|\phi(t,p)\,z_1(p)\|}{t} =\lim_{t\to\infty} \frac{\ln c_1(t,p)}{t}= \lim_{t\to\infty} \frac{1}{t}\,\int_0^t a(p{\cdot}s)\,ds=\int_P a\,d\nu\,,
\]
where the ergodic theorem of Birkhoff has been applied in the last equality. That is, $a\in C_0(P)$. Now the  density of $B(P)$ in $C_0(P)$ permits us to find a sequence of maps $(k_n)_n\subset C_0(P)$ with  $k_n\to 0$ as $n\to\infty$ and  $a+k_n\in B(P)$ for every $n\geq 1$. Now, for each $n\geq 1$, we take $\wit c_n(t,p)$ the associated 1-dim cocycle for $h+k_n$, which satisfies
 \begin{align*}
 \ln \wit c_n(t,p)&= \ln c(t,p) +\int_0^t k_n(p{\cdot}s)\,ds = \ln \frac{e(p)(x_0)}{e(p{\cdot}t)(x_0)}+\ln c_1(t,p) +\int_0^t k_n(p{\cdot}s)\,ds\\
 &=\ln \frac{e(p)(x_0)}{e(p{\cdot}t)(x_0)}+ \int_0^t a(p{\cdot}s)\,ds +\int_0^t k_n(p{\cdot}s)\,ds \,.
 \end{align*}
Since $c_0\leq e(p)(x_0)\leq C_0$ for any $p\in P$ and $a+k_n\in B(P)$, from this relation it follows that for any $p\in P$, $\sup_{t\in\R}|\ln \wit c_n(t,p)|<\infty$, meaning that $h+k_n\in B(P\times\bar U)$ for every $n\geq 1$. Since $h+k_n\to h$ as $n\to\infty$, we are done. The proof is finished.
\end{proof}
To finish this section, recall that in Proposition~\ref{prop-convex} we have proved the convexity of the upper Lyapunov exponent $\lambda_P(h)$. We now prove that it is strictly convex except for the case when the two maps $h_1$ and $h_2$ differ in a map $k(p)$.
\begin{teor}\label{prop-convex estricta}
Let $h_1,\,h_2 \in C(P\times\bar U)$ be of class $C^1$ in $x\in U$ and of class $C^1$ along the trajectories of $P$. Then, for any $0< r< 1$,  $\lambda_P(r h_1+(1-r)h_2)=r\lambda_P( h_1)+(1-r)\lambda_P(h_2) $ if and only if $\nabla_x h_1= \nabla_x h_2$.
\end{teor}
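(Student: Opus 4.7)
\textbf{The ``if'' direction.} This is immediate from Proposition~\ref{prop-h+k}. If $\nabla_x h_1=\nabla_x h_2$, then $k(p):=h_1(p,x)-h_2(p,x)$ is independent of $x$ and defines an element of $C(P)$. Writing $h_1=h_2+k$ and $rh_1+(1-r)h_2=h_2+rk$ and applying Proposition~\ref{prop-h+k}(i) to both identities yields, after cancellation, $\lambda_P(rh_1+(1-r)h_2)=\lambda_P(h_2)+r\int_P k\,d\nu=r\lambda_P(h_1)+(1-r)\lambda_P(h_2)$.

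\textbf{The ``only if'' direction: setup.} Suppose the equality of Lyapunov exponents and aim to deduce $\nabla_x h_1=\nabla_x h_2$. Fix $p\in P$ and $z_0\gg 0$ regular enough that the classical solutions $y_1,y_2,y_h$ of the three linear IBV problems (with coefficients $h_1$, $h_2$ and $h:=rh_1+(1-r)h_2$) exist. Following the proof of Proposition~\ref{prop-convex} but retaining the quadratic remainder from the strict convexity of $s\mapsto s^2$ in the pointwise Laplacian estimate, one obtains, for $z:=y_1^r y_2^{1-r}$,
\begin{equation*}
\partial_t z=\Delta z+hz+z\,r(1-r)\,|\nabla_x\ln y_1-\nabla_x\ln y_2|^2.
\end{equation*}
Writing $W:=\ln z-\ln y_h\ge 0$, with $W(0,\cdot)=0$, a direct computation gives the Hamilton--Jacobi-type equation
\begin{equation*}
\partial_t W=\Delta W+2\nabla_x\ln z\cdot\nabla_x W-|\nabla_x W|^2+S,
\end{equation*}
with source $S:=r(1-r)\,|\nabla_x\ln y_1-\nabla_x\ln y_2|^2\ge 0$ and homogeneous Neumann boundary conditions, since $\ln z$ and $\ln y_h$ share the same Robin normal derivative $-\alpha$ on $\partial U$.

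\textbf{The ``only if'' direction: conclusion.} The continuous separation in the uniquely ergodic setting yields $\ln y_\bullet(t,x)/t\to\lambda_P(h_\bullet)$ uniformly in $x\in\bar U$; combined with the equality hypothesis this gives $W(t,x)/t\to 0$ uniformly on $\bar U$. Moreover the asymptotic behaviour of $\nabla_x\ln y_\bullet(t,x)$ is dictated by the principal direction of the continuous separation associated with $h_\bullet$, call it $e_{h_\bullet}(p\cdot t,x)$, so that $\nabla_x\ln y_\bullet(t,x)\to\nabla_x\ln e_{h_\bullet}(p\cdot t,x)$ as $t\to\infty$. Using Birkhoff's theorem for the unique invariant measure $\nu$, together with the $W$-equation integrated suitably to isolate the non-negative contribution of $S$, the vanishing of $W/t$ will propagate to the vanishing of the space-time average of $S$ along the orbit of $p$. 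By continuity and minimality of $(P,\te,\R)$, this forces $\nabla_x\ln e_{h_1}(p,x)=\nabla_x\ln e_{h_2}(p,x)$ on $P\times\bar U$. Finally, differentiating the cocycle identity $\phi_{h_i}(t,p)\,e_{h_i}(p)=c_{h_i}(t,p)\,e_{h_i}(p\cdot t)$ at $t=0$ and taking the spatial gradient shows that $\nabla_x\ln e_{h_i}$ evolves along the base flow by a viscous Burgers-type equation with inhomogeneity $\nabla_x h_i$; subtracting the resulting equations for $i=1,2$ gives $\nabla_x h_1=\nabla_x h_2$.

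\textbf{Main obstacle.} The delicate step is extracting the vanishing of $S$ on the orbit of $p$ from the bound $W(t,x)/t\to 0$. A naive Duhamel argument fails because the absorptive nonlinearity $-|\nabla_x W|^2$ makes $W$ strictly smaller than any comparison linear sub-problem driven by $S$. The rigorous argument must combine the identity
\begin{equation*}
\frac{d}{dt}\int_U W\,dx=\int_U\!\big[r|\nabla_x\ln y_1|^2+(1-r)|\nabla_x\ln y_2|^2-|\nabla_x\ln y_h|^2\big]\,dx,
\end{equation*}
obtained by integrating $\frac{d}{dt}\int_U\ln y_\bullet\,dx=-\int_{\partial U}\alpha\,dS+\int_U|\nabla_x\ln y_\bullet|^2\,dx+\int_U h_\bullet\,dx$ for $y_\bullet\in\{y_1,y_2,y_h\}$ and forming the $r$-combination, with the variance identity $r|a|^2+(1-r)|b|^2=|ra+(1-r)b|^2+r(1-r)|a-b|^2$ applied to $a=\nabla_x\ln y_1$, $b=\nabla_x\ln y_2$, and a careful asymptotic analysis via unique ergodicity. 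The $C^1$ regularity assumption on $h_1,h_2$ is precisely what guarantees that the $y_i$ are classical solutions and that the principal directions $e_{h_i}(p,x)$ have the spatial regularity needed for the pointwise and integrated manipulations above.
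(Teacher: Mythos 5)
The ``if'' direction is fine and coincides with the paper's argument. For the ``only if'' direction, however, the paper proves the contrapositive, and this choice is essential to why its proof closes while yours does not: assuming $\nabla_x h_1\neq\nabla_x h_2$, it first reduces to $h_1,h_2\in B(P\times\bar U)$ (by shifting with a map in $C_0(P)$, which changes neither $\lambda_P$ nor $\nabla_x$), then uses Proposition~\ref{prop-compacto invariante}(ii) to produce \emph{continuous strongly positive equilibria} $b_1,b_2$ for the two linear semiflows, forms the geometric mean $b=b_1^r b_2^{1-r}$, and shows via Lemma~2.11 of~\cite{nuos3} and the strict pointwise inequality that $b$ is a \emph{strong} super-equilibrium for the semiflow with coefficient $rh_1+(1-r)h_2$. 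By linearity $\beta b$ is a strong super-equilibrium for every $\beta>0$, forcing all positive trajectories to decay, hence $\lambda_P(rh_1+(1-r)h_2)<0$. Your plan skips this reduction entirely, which is why you end up fighting asymptotics rather than a one-step order argument.

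More importantly, the gap you flag in your ``Main obstacle'' paragraph is genuine and your proposed repair does not work. The integral identity
\[
\frac{d}{dt}\int_U W\,dx=\int_U\!\bigl[r|\nabla_x\ln y_1|^2+(1-r)|\nabla_x\ln y_2|^2-|\nabla_x\ln y_h|^2\bigr]\,dx
\]
is correct (the Robin boundary terms and the $h$-terms cancel), but after applying the variance identity to the first two terms you get $\int_U S\,dx+\int_U\bigl[|\nabla_x\ln z|^2-|\nabla_x\ln y_h|^2\bigr]\,dx$, and since $\nabla_x\ln z=\nabla_x\ln y_h+\nabla_x W$ the correction term equals $\int_U\bigl[2\nabla_x\ln y_h\cdot\nabla_x W+|\nabla_x W|^2\bigr]\,dx$, whose first summand has no sign. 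So $\frac{d}{dt}\int_U W\,dx$ does not dominate $\int_U S\,dx$, and the desired implication ``$W/t\to 0$ $\Rightarrow$ $S$ vanishes on the orbit of $p$'' does not follow from this identity. In addition, the claim that $\nabla_x\ln y_\bullet(t,\cdot)\to\nabla_x\ln e_{h_\bullet}(p\cdot t,\cdot)$ in a sense strong enough for ergodic averaging is itself a nontrivial regularity statement that is not needed in the paper's proof, precisely because the strong-super-equilibrium mechanism replaces asymptotic analysis by an order argument at a single time. I would recommend abandoning the direct implication and following the contrapositive strategy, where the three-step reduction ($B(P\times\bar U)$, then $C_0(P\times\bar U)$, then general $C(P\times\bar U)$) keeps each stage elementary.
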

\begin{proof}
First of all, we know that $\lambda_P(r h_1+(1-r)h_2)\leq r\lambda_P(h_1)+(1-r)\lambda_P(h_2)$ for any $h_1,\,h_2\in C(P\times\bar U)$. Now, if $\nabla_x h_1= \nabla_x h_2$, then $h_1=h_2 + k$ for some $k\in C(P)$. Then, using Proposition~\ref{prop-h+k}~(i) it is easy to check that $\lambda_P(r h_1+(1-r)h_2)= r\lambda_P(h_1)+(1-r)\lambda_P(h_2)$ for any $0< r< 1$. Note that in particular for maps $h_1,\,h_2\in C_0(P\times\bar U)$ this means that  $r h_1+(1-r)h_2\in C_0(P\times\bar U)$ for any $0\leq r\leq 1$.
\par
For the converse, we first prove the result for maps in $B(P\times\bar U)$, then in $C_0(P\times\bar U)$ and finally in the general case.
\par
Assume  that $h_1, h_2\in B(P\times\bar U)$ and $\des\frac{\partial h_1}{\partial x_i}\not=\des\frac{\partial h_2}{\partial x_i}$ for some $i\in\{1,\ldots,m\}$ and let us see that $\lambda_P(r h_1+(1-r)h_2)<0$ for any $0<r<1$.  By Proposition~\ref{prop-compacto invariante} (ii) let $b_1,\,b_2: P\to \Int X_+ $ be continuous equilibria respectively for the linear skew-product semiflows
induced by the problems~\eqref{pdefamily} given by $h_1$ and $h_2$, and recall that they lie in the corresponding principle bundle.
\par
Then, as in the proof of Proposition~\ref{prop-convex}, we consider $b:P\to X$, $p\mapsto b(p)=\exp(r\ln b_1(p)+(1-r)\ln b_2(p))$, which satisfies: it is continuous, $C^{1\!}$ along the orbits in $P$, and for every $p\in P$
the map $\bar b:\R_+\times\bar U\to\R,\;(t,x)\mapsto\bar b(t,x)= b(p{\cdot}t)(x)$
is continuously differentiable,
twice continuously differentiable in $x\in U$ and besides, denoting
\begin{equation*}
b'(p)(x)=\frac{\partial}{\partial t}b(p{\cdot}t)(x)|_{t=0}\,,\quad p\in P,\;\,x\in\bar U,
\end{equation*}
it holds
\begin{equation*}
\left\{\begin{array}{l}  b'(p)(x) \geq
 \Delta \,  b(p)(x)+(r h_1(p,x)+(1-r) h_2(p,x))\,b(p)(x)\,,\quad p\in P,\;\,x\in\bar U,\\[.2cm]
B\bar b:=\alpha(x)\,\bar b+\des\frac{\partial \bar b}{\partial n} =0\,,\quad t>0\,,\;\, x\in \partial U.
\end{array}\right.
\end{equation*}
By Lemma~2.11  in N\'{u}\~{n}ez et al.~\cite{nuos3}, $b(p)$ defines a continuous super-equilibrium for the semiflow associated with the linear family~\eqref{pdefamily} with the term $r h_1+(1-r) h_2$, with associated linear cocycle $\Phi(t,p)$.
Let us now see that $ b(p)$ is a strong super-equilibrium. Once more according to Lemma~2.11 in~\cite{nuos3}, it suffices to find some $p_0\in P$ and $x_0\in U$ for which
\begin{equation}\label{strict}
b'(p_0)(x_0)>
 \Delta \,  b(p_0)(x_0)+(r h_1(p_0,x_0)+(1-r) h_2(p_0,x_0))\,b(p_0)(x_0)\,.
\end{equation}
\par
Now, it is not difficult to check that
\begin{align*}
\nabla_x h_1= \nabla_x h_2 &\Longleftrightarrow h_1= h_2 + k \;\text{ for some }k\in C(P) \\ & \Longleftrightarrow
b_1=\lambda\, b_2 \;\text{ for some positive map }\lambda\in C(P)\\&\Longleftrightarrow \nabla_x \ln b_1= \nabla_x \ln b_2\,.
\end{align*}
Since we are assuming that this is not the case, we deduce that there exists a $p_0\in P$ such that $\nabla_x \ln b_1(p_0) \not= \nabla_x \ln b_2(p_0)$, which means that there exists an $x_0\in U$ such that $\nabla_x \ln b_1(p_0)(x_0) \not= \nabla_x \ln b_2(p_0)(x_0$). That is, for some $i\in\{1,\ldots,m\}$,
\[
 \frac{1}{ b_1(p_0)(x_0)} \frac{\partial b_1(p_0)(x_0)}{\partial x_i} \not=  \frac{1}{ b_2(p_0)(x_0)} \frac{\partial b_2(p_0)(x_0)}{\partial x_i} \,,
\]
and going back to the calculations made in the proof of Proposition~\ref{prop-convex} for $z$, and recalling that $\R\to\R$, $s\mapsto s^2$ is a strictly convex map, we deduce  that~\eqref{strict} holds, so that $b(p)$ is a strong super-equilibrium.
\par
Therefore, by considering any $\beta>0$, by linearity $\beta\, b(p)$ is a strong super-equilibrium too. This forces $\lim_{t\to\infty} \Phi(t,p)\,z=0$ for any $z\gg 0$. By Theorems~\ref{teor-equivalencias} and~\ref{teor-oscilacion} it cannot be $\lambda_P(r h_1+(1-r)h_2)=0$ and consequently $\lambda_P(r h_1+(1-r)h_2)<0$ (see also Sacker and Sell~\cite{sase94}), as we wanted to see.
\par
Next, we consider the case $h_1,h_2\in C_0(P\times\bar U)$. If some of them is not in $B(P\times\bar U)$, for instance $h_1\in \mathcal{U}(P\times\bar U)$, then, as seen in the proof of Theorem~\ref{teor-categoria} one can find a map $k\in C_0(P)$ such that $h_1+k \in B(P\times\bar U)$. But since $\lambda_P(h_1+k)=\lambda_P(h_1)$ and $\nabla_x (h_1+k) = \nabla_x h_1$, we can just replace $h_1$ by $h_1+k$ and apply the previous argument for maps in $B(P\times\bar U)$.
\par
Finally, it remains to deal with regular $h_1,h_2\in C(P\times\bar U)$. In this case we just need to note that   $\lambda_P(h-\lambda_P(h))=0$ for any $h\in C(P\times\bar U)$, so that we fall into the previous case considered. The proof is finished.
\end{proof}
\section{Attractors for non-autonomous parabolic PDEs. The case $\lambda_P=0$}\label{sec-nonlinear}\noindent
In this section we consider a family of scalar linear-dissipative parabolic PDEs over a minimal, uniquely ergodic and aperiodic flow $(P,\theta,\R)$, with Neumann or Robin boundary conditions, given for each $p\in P$ by
\begin{equation}\label{pdefamilynl}
\left\{\begin{array}{l} \des\frac{\partial y}{\partial t}  =
 \Delta \, y+h(p{\cdot}t,x)\,y+g(p{\cdot}t,x,y)\, \\
 \;\;\;\;\;\;= \Delta \, y+ G(p{\cdot}t,x,y)\,,\quad t>0\,,\;\,x\in U,
  \\[.2cm]
By:=\alpha(x)\,y+\des\frac{\partial y}{\partial n} =0\,,\quad  t>0\,,\;\,x\in \partial U,\,
\end{array}\right.
\end{equation}
where $h\in C_0(P\times\bar U)=\{h\in C(P\times\bar U)\mid \lambda_P(h)=0\}$, and the nonlinear term $g:P\times \bar U\times \R\to \R$ is continuous and of class $C^1$ with respect to  $y$ and satisfies the following conditions which render the equations dissipative:
\begin{itemize}
  \item[(c1)] $g(p,x,0)=\des\frac{\partial g}{\partial y}(p,x,0)=0$ for any $p\in  P$ and $x\in \bar U$;
  \item[(c2)] $y\,g(p,x,y)\leq 0$ for any $p\in  P$, $x\in \bar U$, and $y\in \R$;
  \item[(c3)] $g(p,x,-y)=-g(p,x,y)$ for any $p\in  P$, $x\in \bar U$, and $y\in \R$;
  \item[(c4)] there exists an $r_0>0$ such that $g(p,x,y)=0$ if and only if $|y|\leq r_0$;
  \item[(c5)] $\des\lim_{|y|\to\infty}\frac{g(p,x,y)}{y}=-\infty$ uniformly on $P\times\bar U$.
\end{itemize}
\par
Under these hypotheses, the {\it a priori\/} only locally defined skew-product semiflow
\begin{equation*}
\begin{array}{cccl}
 \tau: & \R_+\times P\times X& \longrightarrow & \hspace{0.3cm}P\times X\\
 & (t,p,z) & \mapsto
 &(p{\cdot}t,u(t,p,z))\,,
\end{array}
\end{equation*}
induced by the mild solutions of the associated ACPs (see Section~\ref{sec-mild solutions}) is globally defined because of the boundedness of solutions, and it is  strongly monotone as stated in Proposition~\ref{prop-strong monot}. Recall also that the section semiflow $\tau_t$ is compact for every $t>0$ (once more, see Travis and Webb~\cite{trwe}).
\par
Section 3.1 in Cardoso et al.~\cite{cardoso} is devoted to the existence of attractors for linear-dissipative parabolic PDEs of type~\eqref{pdefamilynl} with a general $h\in C(P\times \bar U)$ and conditions (c1), (c2), (c4) and (c5) for the nonlinear term (condition (c3) has been added here for the sake of simplicity). They prove that there exists an absorbing compact set for the  semiflow, thanks to the presence of the nonlinear dissipative term $g(p,x,y)$ (see Proposition~2 in~\cite{cardoso}), so that  $\tau$ has a global attractor $\A=\cup_{p\in P} \{p\}\times A(p)$ for the sets $A(p)=\{z\in X\mid (p,z)\in \A\}\subset X$, which is formed by bounded entire trajectories. Besides, in~\cite{cardoso} the structure of the attractor is studied in  the cases $\lambda_P<0$ and $\lambda_P>0$, where $\lambda_P$ is  the upper Lyapunov exponent  of the linearized family along the null solution, which is of type~\eqref{pdefamily}:
\begin{equation*}
\left\{\begin{array}{l} \des\frac{\partial y}{\partial t}  =
 \Delta \, y+h(p{\cdot}t,x)\,y\,,\quad t>0\,,\;\,x\in U, \;\, \text{for each}\; p\in P,\\[.2cm]
By:=\alpha(x)\,y+\des\frac{\partial y}{\partial n} =0\,,\quad  t>0\,,\;\,x\in \partial U.
\end{array}\right.
\end{equation*}
\par
In this section we concentrate on the unresolved case $\lambda_P=0$, by assuming that $h\in C_0(P\times\bar U)$; just the case that has been studied in detail in Section~\ref{sec-linear}. We keep the notation used up to now for the linear problem. In particular, $\tau_L$ is the induced linear skew-product semiflow.
\par
Note that, on the one hand, for each fixed $p\in P$, the family of compact sets $\{A(p{\cdot}t)\}_{t\in  \R}$ is the pullback attractor for the process on $X$ defined, for each fixed $p\in P$, by $S_p(t,s)\,z=u(t-s,p{\cdot}s,z)$ for any $z\in X$ and $t\geq s$,
 meaning that:
\begin{itemize}
\item[(i)] it is invariant, i.e.,  $S_p(t,s)\,A(p{\cdot}s)=A(p{\cdot}t)$ for any $t\geq s$;
\item[(ii)] it pullback attracts bounded subsets of $X$, i.e., for any bounded set $B\subset X$,
\[
\lim_{s\to -\infty}{\rm dist}(S_p(t,s)\,B,A(p{\cdot}t))=0 \quad \hbox{for any}\; t\in \R\,;
\]
\item[(iii)] it is the minimal family of closed sets with property (ii).
\end{itemize}
A nice reference for processes and pullback attractors is Carvalho et al.~\cite{calaro}.
\par
On the other hand, the non-autonomous set  $\{A(p)\}_{p\in P}$ is a cocycle attractor for the non-autonomous dynamical system (see Section~\ref{sec-preli}). Besides, taking
\[
a(p)=\inf A(p)\quad\text{ and } \quad b(p)=\sup A(p) \quad\text{for any}\;\,p\in P,
\]
these are semicontinuous equilibria for $\tau$ and
\[
\A\subseteq \bigcup_{p\in P} \{p\}\times [a(p),b(p)]\,.
\]
\par
In our case $a(p)=-b(p)$, because of the odd character of the nonlinear term $g(p,x,y)$ in the $y$ variable assumed in (c3). Therefore, we are only going to concentrate on the properties of $b(p)$, but note that in the general case the properties of $b(p)$ can also be immediately transferred to $a(p)$. Finally, as stated in Proposition~3 in~\cite{cardoso}, the pullback attraction property of the cocycle attractor implies that fixed any $z_0\gg 0$, for  $r>0$ large enough,
\begin{equation}\label{b(p)}
b(p)=\lim_{T\to\infty} u(T,p{\cdot}(-T),r z_0)\quad\text{for any}\;\,p\in P.
\end{equation}
\par
The aim of the next two results is to describe the structure of the global attractor depending on whether the map $h\in C_0(P\times\bar U)$ in the linear part lies in $B(P\times\bar U)$ or in $\mathcal{U}(P\times\bar U)$:  in the first case, roughly speaking, $\A$ is a wide set, whereas in the second case it is a pinched set with a complex dynamical structure. In any case, the sections $A(p)$ of $\A$ are very thin sets in the infinite dimensional space $X$.
\begin{teor}\label{teor-estr atractor caso b}
Let $h \in B(P\times\bar U)$ and let $\widehat e: P \to \Int X_+$ be the continuous map given in  Proposition~$\ref{prop-compacto invariante}$ {\rm (ii)}.
Then, there exists an $r_*>0$ such that
\[
A(p)=\{r\,\widehat e(p)\mid |r|\leq r_*\}\subset X_1(p)\quad\text{for any}\;\,p\in P.
\]
\end{teor}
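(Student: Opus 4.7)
The plan is to prove both inclusions of the asserted equality with the explicit choice $r_* := r_0/M$, where $M := \max_{p \in P} \|\widehat e(p)\|_\infty$ is finite and positive by compactness of $P$ and the continuity and strict positivity of $\widehat e$.

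For the inclusion $\{r\,\widehat e(p) : |r| \leq r_*\} \subseteq A(p)$ the argument is direct: if $|r| \leq r_*$ then $|r\,\widehat e(p\cdot t)(x)| \leq |r|\,M \leq r_0$ at every $(t,x)$, so hypothesis (c4) forces the nonlinear term $g$ to vanish identically along the curve $t \mapsto r\,\widehat e(p\cdot t)$. Since $\widehat e$ is an equilibrium for $\tau_L$ by Proposition~\ref{prop-compacto invariante}(ii), this curve is a bounded entire mild solution of \eqref{pdefamilynl}, hence $(p, r\,\widehat e(p)) \in \A$. For the converse, I plan to reduce the statement to the pointwise estimate $|v(t)(x)| \leq r_*\,\widehat e(p\cdot t)(x)$ for every bounded entire orbit $v$ of \eqref{pdefamilynl} along $p$. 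Once this is available, $|v(t)(x)| \leq r_*\,M = r_0$ everywhere, so $g$ vanishes along $v$ and $v$ becomes a bounded entire solution of the linear problem \eqref{pdefamily}; Proposition~\ref{prop-compacto invariante}(i) then places $v(t) \in X_1(p\cdot t) = \R\,\widehat e(p\cdot t)$, and the invariance of $\widehat e$ under $\tau_L$ combined with uniqueness of linear mild solutions forces $v(t) = r\,\widehat e(p\cdot t)$ for a constant $r$ with $|r| \leq r_*$.

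The main obstacle is this pointwise bound $|v| \leq r_*\,\widehat e$, which I propose to obtain via a strong parabolic maximum principle on the quotient $w(t,x) := v(t,x)/\widehat e(p\cdot t)(x)$. This quotient is well-defined and uniformly bounded since $\widehat e \gg 0$ is bounded away from zero on the compact $P$, and a direct calculation that exploits the linear equation satisfied by $\widehat e$ shows that $w$ obeys
\[
w_t = \Delta w + 2\,\frac{\nabla \widehat e}{\widehat e}\cdot\nabla w + \frac{g(p\cdot t, x, w\,\widehat e)}{\widehat e}
\]
with homogeneous Neumann boundary condition (the $\alpha$-terms coming from $v$ and from $\widehat e$ cancel); classicality for $t \in \R$ follows from the smoothing of the analytic semigroup generated by $A$. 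Since the reaction term is nonpositive whenever $w > 0$, I would focus on $M_w := \sup_{t,x} w(t,x)$: assuming for contradiction $M_w > r_*$, pick $(t_n, x_n)$ with $w(t_n, x_n) \to M_w$, and use compactness of $\A$ together with continuity of the shift on $\A$ to extract, along a subsequence, a limit entire orbit $\bar v$ in $\A$ along some $\bar p \in P$ whose quotient $\bar w$ attains $M_w$ at $(0, \bar x)$. The strong maximum principle, together with Hopf's lemma at the boundary to handle the Neumann condition, then forces $\bar w \equiv M_w$ on the parabolic past, so $\bar v \equiv M_w\,\widehat e(\bar p\cdot t)$ for $t \leq 0$; substituting into \eqref{pdefamilynl} and cancelling the linear part yields $g(\bar p\cdot t, x, M_w\,\widehat e(\bar p\cdot t)(x)) \equiv 0$. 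Minimality of the base flow and continuity of $\widehat e$ propagate this identity to all of $P \times \bar U$, so (c4) gives $M_w\,M \leq r_0$, contradicting $M_w > r_0/M$. Hence $\sup w \leq r_*$, and the symmetric argument applied to the entire orbit $-v$ (which also lies in $\A$ because the odd symmetry (c3) yields $u(t,p,-z) = -u(t,p,z)$) gives $\inf w \geq -r_*$, completing the required pointwise bound and thus the theorem.
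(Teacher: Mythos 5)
Your approach is genuinely different from the paper's. Both proofs start the same way on the easy inclusion $\{r\,\widehat e(p):|r|\le r_*\}\subseteq A(p)$ (indeed your $r_*=r_0/M$ coincides with the paper's $r_*=\sup\{r>0\mid r\,\widehat e(p)\le\bar r_0\ \forall p\}$), and both finish with Proposition~\ref{prop-compacto invariante}(i) to collapse entire bounded linear orbits into the principal bundle. The difference is how the pointwise upper bound $b(p)\le r_*\widehat e(p)$ is obtained. The paper shows that for $r>r_*$ the map $\widehat e_r$ is a \emph{strong super-equilibrium} and then uses the pullback characterization of $b$ (as in Proposition~2 of~\cite{cardoso}) to force $b(p)=r_*\widehat e(p)$; crucially, this only invokes Theorem~\ref{teor-comparacion} and Proposition~\ref{prop-strong monot}, both of which are already established at the level of \emph{mild} solutions. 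You instead pass to the quotient $w=v/\widehat e$, observe it is a subsolution of a homogeneous Neumann problem, and invoke the strong parabolic maximum principle together with the Hopf boundary lemma on a limiting orbit that attains the supremum. Conceptually this is an attractive and more ``PDE-flavoured'' route, and it cleanly bypasses the super-equilibrium machinery.

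There is, however, a genuine gap in your treatment of regularity. You assert that ``classicality for $t\in\R$ follows from the smoothing of the analytic semigroup,'' but under the standing hypotheses the coefficient $h\in C(P\times\bar U)$ and the nonlinearity $g$ are merely \emph{continuous} along the flow, so $t\mapsto\tilde h(p{\cdot}t)$ need not be H\"{o}lder continuous in $t$; without that, the mild solution is not guaranteed to lie in $D(A)$ or to be a classical solution, and neither the strong parabolic maximum principle nor the Hopf lemma (which require $C^{2,1}$ regularity up to the boundary, or at least a carefully formulated weak version) can be applied directly. This is precisely the point the paper flags at the end of Section~\ref{sec-mild solutions}, and it is why Theorem~\ref{teor-comparacion} is proved by a Tietze-plus-mollification approximation rather than by a direct maximum-principle argument. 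Your proof would need a similar approximation step, or an appeal to a weak strong-maximum-principle, to close this gap. A second, minor, issue: $M_w$ should be defined as the supremum of $v/\widehat e$ over the \emph{whole attractor} $\A$ rather than over the single orbit $v$; otherwise the limiting orbit $\bar v$ extracted by compactness of $\A$ need not satisfy $\bar w\le M_w$, and the maximum you locate at $(0,\bar x)$ need not be a global maximum for $\bar w$. With that redefinition the compactness-and-extraction step and the subsequent propagation by minimality work as you describe.
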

\begin{proof}
The map $\widehat e$ given in Proposition~\ref{prop-compacto invariante} (ii) defines a  continuous  equilibrium for the linear skew-product semiflow $\tau_L$, so that we have a family of continuous equilibria for the linear problem given by $\widehat e_r(p)=r\,\widehat e(p)$, $p\in P$ for each $r>0$. Due to condition (c4) in the nonlinear term, clearly for $r>0$ small enough $\widehat e_r$  is also a continuous equilibrium for the nonlinear semiflow $\tau$. At this point we define
\[
r_*=\sup\{r>0\mid r\,\widehat e(p) \leq \bar r_0 \,\hbox{ for any }\,p\in P\}\,,
\]
for the map $\bar r_0$ in $X$ identically equal to $r_0$, the constant given in (c4).
\par
Clearly, if $|r|\leq r_*$, $\widehat e_r$ is a continuous equilibrium for  $\tau$. Therefore, the set  $\{r\,\widehat e(p)\mid |r|\leq r_*\}\subseteq A(p)$ for any $p\in P$.
\par
By condition (c2) and Theorem~\ref{teor-comparacion}, we can compare solutions of the linear and the nonlinear problems. In particular, for $r>r_*$, $\widehat e_r$ is a super-equilibrium for $\tau$, that is,
$\widehat e_r(p{\cdot}t)\geq u(t,p,\widehat e_r(p))$ for any $p\in P$ and $t\geq 0$, and since it is no longer an equilibrium, there are a $p_0\in P$ and a time $t_0>0$ such that $\widehat e_r(p_0{\cdot}t_0)> u(t_0,p_0,\widehat e_r(p_0))$. Let us compare solutions  and apply the strong monotonicity of the nonlinear semiflow $\tau$ to get, for $t>0$, $
\widehat e_r(p_0{\cdot}(t+t_0))=\phi(t,p_0{\cdot}t_0)\,\widehat e_r(p_0{\cdot}t_0)\geq u(t,p_0{\cdot}t_0,\widehat e_r(p_0{\cdot}t_0))\gg u(t,p_0{\cdot}t_0,u(t_0,p_0,\widehat e_r(p_0))$,
that is, $\widehat e_r(p_0{\cdot}(t+t_0))\gg u(t+t_0,p_0,\widehat e_r(p_0))$. This implies that $\widehat e_r$ is a strong super-equilibrium (see Novo et al.~\cite{nono2}).
Similar arguments to the ones used in the proof of Proposition~2 in~\cite{cardoso} lead to the fact that, fixed a $z_0\gg 0$, for $r$ large enough,
\[
\lim_{T\to\infty} u(T,p{\cdot}(-T),rz_0)=r_*\,\widehat e(p)\quad\text{for any}\;\,p\in P,
\]
and thus, $b(p)=r_*\,\widehat e(p)$ for any $p\in P$.  Then, $\A$ is an invariant compact set also for the linear semiflow $\tau_L$. Since  $\A$ is composed of bounded  entire  trajectories, by Proposition~\ref{prop-compacto invariante} (i) these entire trajectories lie in the principal bundle, that is, $A(p)\subset X_1(p)$, and consequently $ A(p)=\{r\,\widehat e(p)\mid |r|\leq r_*\}$ for any $p\in P$, as we wanted to prove.
\end{proof}
We remark that in the previous situation, $b(p)$ is a continuous equilibrium.
\par
In the following theorem the subindexes s and f  respectively stand for second and first category sets in the Baire sense. The result can be rephrased by saying that  the presence of a pinched global attractor is generic in $C_0(P\times\bar U)$ (see Theorem~\ref{teor-categoria}).
\begin{teor}\label{teor-estr atractor caso u}
Let $h \in \mathcal{U}(P\times\bar U)$. Then, the global attractor $\A$ is a pinched set. More precisely:
\begin{itemize}
\item[(i)] There exists an invariant residual set $P_{\rm{s}}\subsetneq P$ such that $b(p)=0$ for any $p\in P_{\rm{s}}$. In fact $P_{\rm{s}}$ is the set of continuity points of $b$.
\item[(ii)] The set $P_{\rm{f}}=P\setminus P_{\rm{s}}$ is an invariant dense set of first category and $b(p)\gg 0$ for any $p\in P_{\rm{f}}$.
\end{itemize}
\end{teor}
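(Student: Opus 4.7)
The plan is to set $P_{\rm s}=\{p\in P\mid b(p)=0\}$ and $P_{\rm f}=P\setminus P_{\rm s}$, and to verify the four assertions about them by transferring the oscillation of the $1$-dim linear cocycle $c(t,p)$ to the nonlinear attractor via the pullback formula~\eqref{b(p)} and the comparison between $\tau$ and $\tau_L$ provided by (c2) and Theorem~\ref{teor-comparacion}. Preliminarily I would note that $0\in A(p)$ for every $p$ by (c1), so $b\geq 0$; that the sections of $\A$ are upper semicontinuous and the positive cone of $C(\bar U)$ is normal, whence $\{p\mid \|b(p)\|\geq\varepsilon\}$ is closed for every $\varepsilon>0$ and $P_{\rm s}$ is $G_\delta$; and that both $P_{\rm s}$ and $P_{\rm f}$ are invariant, by the equilibrium property of $b$ combined with the strong monotonicity of $\tau$ (Proposition~\ref{prop-strong monot}).

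The central step for (i) is to show that $\sup_{t\leq 0}c(t,p)=\infty$ forces $A(p)=\{0\}$. Given $z\in A(p)$, take an entire bounded trajectory $v\colon\R\to X$ with $v(0)=z$; the compactness of $\A$ provides $R>0$ with $|v(-T)(x)|\leq R$ for every $x\in\bar U$ and $T\geq 0$. Denoting by $w\in X$ the constant function equal to $R$, the monotonicity of $\tau$ and the odd symmetry~(c3) sandwich $z=u(T,p\cdot(-T),v(-T))$ between $\pm u(T,p\cdot(-T),w)$. Then (c2) and Theorem~\ref{teor-comparacion} give $u(T,p\cdot(-T),w)\leq \phi(T,p\cdot(-T))\,w\leq (R/e_*)\,c(T,p\cdot(-T))\,e(p)=(R/e_*)\,e(p)/c(-T,p)$, where $e_*=\min_{q\in P,\,x\in\bar U}e(q)(x)>0$ by continuity and strong positivity of $e$ on the compact space $P$. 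By Remark~\ref{remark-DE} the cocycle $c(t,p)$ has no exponential dichotomy, so Theorem~\ref{teor-johnson} produces, for every $p$ in the residual oscillation set $P_{\rm o}$, a sequence $T_n\uparrow\infty$ with $c(-T_n,p)\to\infty$; hence $z=0$ and $A(p)=\{0\}$. Thus $P_{\rm o}\subseteq P_{\rm s}$, and since $P_{\rm s}$ is $G_\delta$ and contains a dense set, $P_{\rm s}$ is residual.

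For (ii), non-emptiness of $P_{\rm f}$ comes from Proposition~\ref{prop-compacto invariante}~(iii): there is $p_0\in P$ such that $v_0(t)=c(t,p_0)\,e(p_0\cdot t)$ is a bounded positive entire solution of the linear equation. For $\varepsilon>0$ small enough that $\|\varepsilon\,v_0(t)\|_X<r_0$ for all $t\in\R$, condition~(c4) forces $g(p_0\cdot t,\cdot,\varepsilon v_0(t))\equiv 0$, so $\varepsilon\,v_0$ is an entire bounded trajectory of $\tau$ as well and $(p_0,\varepsilon\,e(p_0))\in\A$. Hence $b(p_0)\geq \varepsilon\,e(p_0)\gg 0$, giving $p_0\in P_{\rm f}$. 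The density of $P_{\rm f}$ then follows from its invariance under the minimal flow on $P$, and it is of first category as the complement of a residual set. The strengthening $b(p)\gg 0$ on $P_{\rm f}$ uses strong monotonicity once more: by invariance $b(p\cdot(-1))>0$, so $b(p)=u(1,p\cdot(-1),b(p\cdot(-1)))\gg u(1,p\cdot(-1),0)=0$.

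The continuity characterization in (i) then closes the argument: at $p\in P_{\rm s}$ the upper semicontinuity of the sections of $\A$ combined with $b\geq 0$ squeezes $b(p_n)\to b(p)=0$ for every $p_n\to p$, so $b$ is continuous at $p$; at $p\in P_{\rm f}$ the density of $P_{\rm s}$ gives $p_n\to p$ with $b(p_n)=0$, which rules out continuity since $b(p)\gg 0$. I expect the main technical care to be required in the monotone sandwich at the heart of the argument, namely making sure that the compactness bound on $\A$ delivers the right two-sided pointwise bound on $v(-T)$ and that (c3) is used correctly to turn the one-sided comparison $u\leq \phi$ into the required two-sided control of $|z|$; but this is routine inside $X=C(\bar U)$ with the sup norm and the tools already developed in Sections~\ref{sec-mild solutions} and~\ref{sec-linear}.
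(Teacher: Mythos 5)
Your proposal is correct, and its core step goes by a genuinely different route than the paper's. The paper invokes a black-box alternative from Theorem~7 of Cardoso et al.~\cite{cardoso} (either $b(p)\geq\lambda_0 e_0$ for all $p$, or $b$ vanishes at all its continuity points), and then eliminates the first alternative by a forward oscillation argument at a single $p_1\in P_{\rm o}$; afterwards the equality $P_{\rm s}=\{p\mid b(p)=0\}$ is deduced. You instead define $P_{\rm s}=\{p\mid b(p)=0\}$ outright and prove $P_{\rm o}\subseteq P_{\rm s}$ directly by a pullback/comparison estimate: for $z\in A(p)$, extract the entire bounded trajectory through $(p,z)$, sandwich $|z|$ by $\pm u(T,p{\cdot}(-T),w)$ using monotonicity and the odd symmetry (c3), dominate by the linear cocycle via (c2)+Theorem~\ref{teor-comparacion}, and let $c(-T_n,p)\to\infty$ along the backward oscillation. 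This is essentially a self-contained proof of one implication of what the paper only establishes afterwards, in Proposition~\ref{prop-pasado c}~(ii), and it removes the dependence on the external dichotomy from~\cite{cardoso}. Your nonemptiness of $P_{\rm f}$ (rescaling the bounded entire linear solution from Proposition~\ref{prop-compacto invariante}~(iii) into the zone $\{|y|\leq r_0\}$) coincides with the paper's $K_\delta$ idea, and the closing continuity argument is the same in substance. One small remark worth making explicit, as you yourself flag: the comparison $u(T,p{\cdot}(-T),w)\leq\phi(T,p{\cdot}(-T))w$ needs that the nonlinear orbit of $w\geq 0$ stays in $X_+$ (true because $0$ is a solution and $\tau$ is monotone), since (c2) only gives $G\leq hy$ for $y\geq 0$; and the residuality of $P_{\rm s}$ already follows from $P_{\rm o}\subseteq P_{\rm s}$ with $P_{\rm o}$ residual, so the $G_\delta$ observation is not strictly needed though it is correct.
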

\begin{proof}
According to Proposition~\ref{prop-compacto invariante} (iii) there exist a $p_0\in P$ and a bounded entire solution  $v:\R\to X_+\setminus \{0\}$ of~\eqref{abstract eq} along the orbit of $p_0$ such that $K_0=\cls \{(p_0{\cdot}t, v(t))\mid t\in \R\}\subset P\times (\Int X_+\cup \{0\})$ is a pinched invariant compact set for the linear semiflow $\tau_L$. Taking $\delta>0$, the set $K_\delta=\{(p,\delta z)\mid (p,z)\in K_0\}\subset P\times (\Int X_+\cup \{0\})$ is still  a  pinched $\tau_L$-invariant compact set, and if $\delta$ is small enough so that $\|z\|\leq r_0$  for any $(p,z)\in K_\delta$ ($r_0$ the one given in condition (c4) for $g$), then $K_\delta$ is also a pinched invariant compact set for the nonlinear semiflow $\tau$. Therefore, $K_\delta\subset \A$ and for every $p\in P$, either $b(p)=0$ or $b(p)\gg 0$.
\par
Let $P_{\rm{s}}$ be the set of continuity points of $b$, which is a residual set. Theorem~7 in~\cite{cardoso} asserts that either there exists a $\lambda_0>0$ such that $b(p)\geq \lambda_0\, e_0$ for every $p\in P$ (where $e_0$ has been taken to be the first eigenfunction for~\eqref{bvp} but it might be any $e_0\gg 0$) or $b(p)=0$ for any $p\in P_{\rm{s}}$. So, assume by contradiction that $b(p)\geq \lambda_0\, e_0$ for every $p\in P$ and take a $p_1\in P_{\rm{o}}$, the set given in Theorem~\ref{teor-oscilacion}. Then, there exists a sequence $(t_n)_n\uparrow \infty$ such that $\lim_{n\to\infty}\|\phi(t_n,p_1)\,r e_0\|=0$ for any $r>0$. But if we take $r>0$ big enough so that $r e_0\geq b(p_1)$, then  using Theorem~\ref{teor-comparacion} to compare solutions of the linear and nonlinear problems, $\phi(t_n,p_1)\,r e_0\geq u(t_n,p_1,r e_0)\geq u(t_n,p_1,b(p_1)) =  b(p_1{\cdot}t_n)\geq \lambda_0\, e_0$ for every $n\geq 1$, which is a contradiction. Therefore, $b(p)=0$ for any $p\in P_{\rm{s}}$, and since the pinched set $K_\delta\subset \A$, also $\A$ is pinched.
\par
Note that if $b(p)=0$ for some $p\in P$,  $p$ is a continuity point for $b$, so that $P_{\rm{s}}=\{p\in P\mid b(p)=0\}$. From here it follows that the set $P_{\rm{s}}$ is invariant, since $b(p)=0$ implies $b(p{\cdot}t)=u(t,p,b(p))=0$ for any $t\geq 0$, because the null map is a solution of the nonlinear problem; and if for a $t>0$ it were $b(p{\cdot}(-t))\gg 0$, it would be $u(t,p{\cdot}(-t),b(p{\cdot}(-t)))=b(p)\gg 0$ by the strong monotonicity. Therefore, it is straightforward that $P_{\rm{f}}=P\setminus P_{\rm{s}}$ is an invariant dense set of first category and $b(p)\gg 0$ for any $p\in P_{\rm{f}}$. The proof is finished.
\end{proof}
From now on, we restrict attention to maps $h\in\mathcal{U}(P\times\bar U)$. For a further study of the dynamics of the global attractor $\A$, the sets $P_{\rm{s}}$ and $P_{\rm{f}}$ given in Theorem~\ref{teor-estr atractor caso u} play a fundamental role. Note that, roughly speaking, if  $\nu(P_{\rm{s}})=1$,  the boundary maps $a(p)$ and $b(p)$ of  $\A$ touch each other over a set of full measure, whereas if $\nu(P_{\rm{f}})=1$ these maps only coincide over a set of null measure. \par
We state a technical result, which characterizes the points in  the sets $P_{\rm{f}}$ and $P_{\rm{s}}$  in terms of the behaviour for negative times  of the 1-dim linear cocycle $c(t,p)$ given in Definition~\ref{defi-cociclo c}. Recall that $e(p)\gg 0$, $p\in P$ are the generators of the principal bundle in the continuous separation of $\tau_L$.
\begin{prop}\label{prop-pasado c}
Let $h \in \mathcal{U}(P\times\bar U)$. Then, for $p\in P$:
\begin{itemize}
  \item[(i)] $p\in P_{\rm{f}}$ if and only if $\des\sup_{t\leq 0} c(t,p)<\infty$;
  \item[(ii)] $p\in P_{\rm{s}}$  if and only if $\des\sup_{t\leq 0} c(t,p)=\infty$.
\end{itemize}
\end{prop}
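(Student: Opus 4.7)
Since $P_{\rm f}$ and $P_{\rm s}$ partition $P$, the statements (i) and (ii) are contrapositives of each other, so it suffices to prove (i).

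For the direction $p \in P_{\rm f} \Rightarrow \sup_{t \leq 0} c(t,p) < \infty$, the plan is to combine the fact that $b$ is a $\tau$-equilibrium with the comparison principle. Since (c2) forces $g \leq 0$ on the nonnegative part of $X$, Theorem~\ref{teor-comparacion} gives
\[
b(p) \,=\, u(s, p\cdot(-s), b(p\cdot(-s))) \,\leq\, \phi(s, p\cdot(-s))\, b(p\cdot(-s)), \quad s \geq 0.
\]
Boundedness of the attractor together with continuity and strong positivity of $\{e(q)\}_{q\in P}$ produces a uniform constant $C > 0$ with $b(q) \leq C\, e(q)$ for all $q \in P$; using \eqref{c} this becomes $b(p) \leq C\, c(s, p\cdot(-s))\, e(p)$. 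Combining with $\mu\, e(p) \leq b(p)$ for some $\mu > 0$ (from $b(p) \gg 0$), one obtains $c(s, p\cdot(-s)) \geq \mu/C$ uniformly in $s \geq 0$, which via $c(-s, p) = 1/c(s, p\cdot(-s))$ translates to $\sup_{t \leq 0} c(t,p) \leq C/\mu$.

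For the converse, assume $M := \sup_{t \leq 0} c(t, p) < \infty$. The plan is to construct a compact $\tau$-invariant subset of $\A$ with a strongly positive point over $p$. The backward $\tau_L$-trajectory $\{(p\cdot(-s), c(-s, p)\, e(p\cdot(-s))) : s \geq 0\}$ is bounded in $P \times X_+$, so its $\alpha$-limit set $\Lambda$ is compact and $\tau_L$-invariant; by Proposition~\ref{prop-compacto invariante}(i) it sits in the principal bundle, with elements of the form $(q, \lambda\, e(q))$, $\lambda \in [0, M]$. Pick $\delta > 0$ with $\delta M \leq r_0$: then every point of $\delta \Lambda$ is pointwise dominated by $\bar r_0$, the nonlinearity $g$ vanishes along its trajectories by~(c4), and linearity upgrades the $\tau_L$-invariance of $\Lambda$ to $\tau$-invariance of $\delta \Lambda$. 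Hence the compact set $\delta \Lambda$ sits inside $\A$.

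It remains to exhibit some $(p, \lambda^* e(p)) \in \Lambda$ with $\lambda^* > 0$. By minimality of $(P, \theta, \R)$, choose $s_n \uparrow \infty$ with $p\cdot(-s_n) \to p$; after a subsequence, $c(-s_n, p) \to \lambda^* \in [0, M]$, giving $(p, \lambda^* e(p)) \in \Lambda$. In the non-degenerate subcase $\lambda^* > 0$ the claim follows at once: $(p, \delta\lambda^* e(p)) \in \A$ forces $b(p) \geq \delta\lambda^* e(p) \gg 0$, so $p \in P_{\rm f}$. The main obstacle is the degenerate subcase $\lambda^* = 0$ along every returning sequence, i.e.~$c(-s, p) \to 0$ as $s \to \infty$. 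To handle it, note that $M < \infty$ already excludes $p \in P_{\rm o}$ (the backward oscillation set of Theorem~\ref{teor-johnson}); applying Selgrade's theorem as in the proof of Proposition~\ref{prop-compacto invariante}(iii) produces a two-sided bounded orbit of $\tau_L$ on the principal bundle whose scaled $\delta$-image again lies in $\A$, and one then uses density of this orbit's $P$-projection, $\tau$-invariance of $\A$, and strong monotonicity of $\tau$ (Proposition~\ref{prop-strong monot}) to transfer strict positivity to the fibre over $p$.
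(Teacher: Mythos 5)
Your first implication, $p\in P_{\rm f}\Rightarrow\sup_{t\le 0}c(t,p)<\infty$, is essentially the paper's argument stated directly rather than by contradiction; combining $b(p)\le C\,c(s,p\cdot(-s))\,e(p)$ with $\mu\,e(p)\le b(p)$ is exactly what the paper does, and it is correct.

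The converse is where there is a real gap. You form the $\alpha$-limit set $\Lambda$ of the backward linear trajectory, scale by $\delta$ to land inside the zone $\|z\|\le r_0$, and then try to exhibit a strictly positive point of $\delta\Lambda$ sitting over $p$. As you yourself flagged, the degenerate subcase $\lambda^*=0$ is a genuine obstruction, and your patch does not close it. Selgrade's theorem, as used in Proposition~\ref{prop-compacto invariante}(iii), produces a nonzero bounded orbit over \emph{some} $p_0\in P$, not over $p$. And the proposed transfer of strict positivity to the fibre over $p$ ``by density of the $P$-projection and strong monotonicity'' cannot work in principle: the attractor is pinched, so $b$ vanishes on a residual set $P_{\rm s}$ even though $P_{\rm f}$ is dense, and positivity along a dense orbit does not propagate — if it did, the same argument would force $b(q)\gg 0$ for every $q$, contradicting Theorem~\ref{teor-estr atractor caso u}.

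The detour through $\alpha$-limit sets is also unnecessary; the paper's argument is shorter and avoids any case distinction. Since $M:=\sup_{t\le 0}c(t,p)<\infty$, choose $\delta>0$ so small that $\delta\,c(t,p)\,e(p\cdot t)(x)\le r_0$ for every $t\le 0$ and $x\in\bar U$. The map $t\mapsto\delta\,c(t,p)\,e(p\cdot t)$ for $t\le 0$ is then a bounded backward trajectory of $\tau_L$ through $(p,\delta\,e(p))$ that remains inside the zone $|y|\le r_0$ where $g$ vanishes by (c4), so it is also a backward trajectory of $\tau$. Its forward continuation $u(t,p,\delta\,e(p))$, $t\ge 0$, is bounded by dissipativity. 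Gluing these gives an entire bounded trajectory of $\tau$ through $(p,\delta\,e(p))$, which therefore lies in $\A$. Hence $\delta\,e(p)\in A(p)$, so $b(p)\ge\delta\,e(p)\gg 0$, i.e.\ $p\in P_{\rm f}$. No limit sets, no Selgrade, no degenerate subcase.
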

\begin{proof}
It clearly suffices to prove (i). Assume that   $\sup_{t\leq 0} c(t,p)<\infty$ for a certain $p\in P$. Given $r_0$ in condition (c4) for $g$,  $\delta\,c(t,p)\,e(p{\cdot}t)(x)\leq r_0$ for any $t\leq 0$ and $x\in \bar U$ provided that $\delta>0$ is small enough. Then, the solution $u(t,p,\delta\,e(p))$ coincides with the solution $\phi(t,p)\,\delta\,e(p)=\delta\,c(t,p)\,e(p{\cdot}t)$ of the linear abstract problem for negative time, and it is also bounded in forwards time. That is,  it is an entire bounded solution which then lies in the global attractor, i.e., $u(t,p,\delta\,e(p))\in A(p{\cdot}t)$ for any $t\in \R$. In particular, $0\ll \delta\,e(p)\in A(p)$ which implies  $b(p)\gg 0$, i.e., $p\in P_{\rm{f}}$.
\par
Now assume by contradiction  that $\sup_{t\leq 0} c(t,p)=\infty$ for a certain $p\in P_{\rm{f}}$. Since $\{b(p{\cdot}t)\mid t\in \R\}$ is bounded in $X$, there exists a $C>0$ large enough such that $b(p{\cdot}t)\leq C\,e(p{\cdot}t)$ for any $t\in \R$. Then, by monotonicity, Theorem~\ref{teor-comparacion} and~\eqref{c},
\begin{align*}
0\ll b(p)&=u(t,p{\cdot}(-t),b(p{\cdot}(-t)))\leq u(t,p{\cdot}(-t),C\,e(p{\cdot}(-t)))\\&\leq C\,\phi(t,p{\cdot}(-t))\,e(p{\cdot}(-t))=C\, c(t,p{\cdot}(-t))\,e(p)\,\quad \hbox{for any}\;t>0\,.
\end{align*}
\par
By the linear cocycle property $c(t,p{\cdot}(-t))=1/c(-t,p)$,
and from the hypothesis we can take a sequence $(t_n)_n$ of positive times such that $\lim_{n\to\infty} c(-t_n,p)=\infty$. Then it follows  that $\lim_{n\to\infty}c(t_n,p{\cdot}(-t_n))=0$ and consequently $b(p)=0$, which is a contradiction. The proof is finished.
\end{proof}
\begin{coro}\label{coro-1}
Let $h \in \mathcal{U}(P\times\bar U)$. Then the oscillation set $P_{\rm{o}}$ of the cocycle $c(t,p)$ satisfies $P_{\rm{o}}\subseteq P_{\rm{s}}$.
\end{coro}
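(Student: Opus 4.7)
The proof is essentially an immediate consequence of the two preceding results: Theorem \ref{teor-johnson} (applied to the $1$-dim cocycle $c(t,p)$) and the characterization in Proposition \ref{prop-pasado c}(ii).

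The plan is as follows. Fix any $p \in P_{\rm{o}}$. By the very definition of the oscillation set in Theorem \ref{teor-johnson}, applicable here because Remark \ref{remark-DE} guarantees that the real linear skew-product flow associated with $c(t,p)$ has no exponential dichotomy when $h \in \mathcal{U}(P\times\bar U)$, there exist sequences $(t_n^i)_n$ for $i=1,2,3,4$ depending on $p$ with the prescribed oscillating behaviour. In particular, taking $i=4$, there is a sequence $t_n^4 \downarrow -\infty$ such that $\lim_{n\to\infty} c(t_n^4, p) = \infty$. This immediately gives
\[
\sup_{t\leq 0} c(t,p) = \infty\,.
\]

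Now I invoke Proposition \ref{prop-pasado c}(ii), which characterizes membership in $P_{\rm{s}}$ precisely by the condition $\sup_{t\leq 0} c(t,p) = \infty$. Hence $p \in P_{\rm{s}}$, and since $p$ was arbitrary in $P_{\rm{o}}$, this yields the inclusion $P_{\rm{o}} \subseteq P_{\rm{s}}$.

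There is no real obstacle here: the entire content of the corollary is that the backwards-time divergence of $c(t,p)$ on $P_{\rm{o}}$ (one half of the oscillation statement) is exactly what is needed to place $p$ in the ``small'' fiber set $P_{\rm{s}}$. The only thing to be careful about is the direction in the sequence: $P_{\rm{s}}$ is characterized by unboundedness as $t \to -\infty$ and not as $t \to +\infty$, so among the four oscillating sequences provided by Theorem \ref{teor-johnson} one must select one that diverges in the negative time direction, namely $t_n^4 \downarrow -\infty$ with $c(t_n^4,p) \to \infty$.
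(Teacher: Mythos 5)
Your proof is correct and is exactly the argument the paper leaves implicit when it states the corollary without proof: by Theorem~\ref{teor-johnson}, for $p\in P_{\rm{o}}$ the sequence $t_n^4\downarrow -\infty$ with $c(t_n^4,p)\to\infty$ gives $\sup_{t\leq 0}c(t,p)=\infty$, and Proposition~\ref{prop-pasado c}(ii) then places $p$ in $P_{\rm{s}}$. You also rightly flag the one subtle point, namely that one must pick the sequence diverging as $t\to -\infty$, since $P_{\rm{s}}$ is characterized by backward, not forward, unboundedness of the cocycle.
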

\par
When looking at the forwards cocycle $c(t,p)$ for $t\geq 0$, the result is the following.
\begin{prop}\label{prop-Rf o Rs}
Let $h\in\mathcal{U}(P\times\bar U)$ and fix a $z_0\gg 0$ in $X$. Then:
\begin{itemize}
\item[(i)] $\nu(P_{\rm{s}})=1$ $\Leftrightarrow$ $\sup_{t\geq 0} \|\phi(t,p)\,z_0\|=\infty$ for a.e.~$p\in P$ $\Leftrightarrow$ $\des\sup_{t\geq 0} c(t,p)=\infty$ for a.e.~$p\in P$;
\item[(ii)] $\nu(P_{\rm{f}})=1$ $\Leftrightarrow$ $\sup_{t\geq 0} \|\phi(t,p)\,z_0\|<\infty$ for a.e.~$p\in P$     $\Leftrightarrow$ $\des\sup_{t\geq 0} c(t,p)<\infty$ for a.e.~$p\in P$.
\end{itemize}
\end{prop}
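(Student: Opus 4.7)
The plan is to split the chain of equivalences in each of (i) and (ii) into two pieces: an equivalence between the norm $\|\phi(t,p)z_0\|$ and the cocycle $c(t,p)$, and the identification of ``$\sup_{t\geq 0}c(t,p)=\infty$ for $\nu$-a.e.~$p$'' with $\nu(P_{\rm s})=1$. For the first piece, since the map $p\mapsto e(p)\in\Int X_+$ from the continuous separation is continuous on the compact space $P$ with $\|e(p)\|=1$, and $z_0\gg 0$, a compactness argument yields constants $0<\alpha\leq\beta$ with $\alpha\,e(p)\leq z_0\leq\beta\,e(p)$ for every $p\in P$. Applying the monotone operator $\phi(t,p)$ (Proposition~\ref{prop-strong monot lineal}) and using~\eqref{c}, one gets $\alpha\, c(t,p)\leq\|\phi(t,p)z_0\|\leq\beta\,c(t,p)$ for all $t\geq 0$ and $p\in P$, and the $\|\phi\|$-version of each of (i) and (ii) is equivalent to its $c$-version.

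For the second piece, Proposition~\ref{prop-pasado c} identifies $P_{\rm s}=\{p:\sup_{t\leq 0}c(t,p)=\infty\}$, and the cocycle identity shows that both $P_{\rm s}$ and $A:=\{p:\sup_{t\geq 0}c(t,p)=\infty\}$ are $\theta$-invariant Borel sets. By ergodicity of $\nu$, each has $\nu$-measure $0$ or $1$, so the statement reduces to proving $\nu(A)=\nu(P_{\rm s})$, equivalently to excluding the invariant ``mixed'' sets $A\cap P_{\rm f}$ and $A^c\cap P_{\rm s}$ from having full measure. For this, I would invoke Theorem~\ref{teor-shneiberg} twice: once for $c$, yielding for $\nu$-a.e.~$p$ a sequence $t_n\uparrow\infty$ with $c(t_n,p)=1$, and once for the reversed cocycle $\bar c(t,p):=c(-t,p)$ over the reversed flow $\bar\theta_t:=\theta_{-t}$, which remains uniquely ergodic with the same measure $\nu$ and, by Proposition~\ref{prop-gott}, inherits the non-coboundary character of $c$. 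This yields for $\nu$-a.e.~$p$ a sequence $s_n\downarrow -\infty$ with $c(s_n,p)=1$. Using the cocycle identity and $c(t_n,p)=c(s_n,p)=1$, one then computes the monotone limits
\[
\sup_{t\leq 0}c(t,p\cdot t_n)=\sup_{u\leq t_n}c(u,p)\nearrow\sup_{u\in\R}c(u,p),\qquad \sup_{t\geq 0}c(t,p\cdot s_n)=\sup_{u\geq s_n}c(u,p)\nearrow\sup_{u\in\R}c(u,p),
\]
and combining these with the lower semicontinuity of the suprema as functions on $P$, the invariance of $P_{\rm s},P_{\rm f},A,A^c$, and the ergodic $0$/$1$ dichotomy eliminates the mixed cases.

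The main obstacle is this last step: each individual value $\sup_{t\leq 0}c(t,p\cdot t_n)$ is finite at any given $n$ when $p\in P_{\rm f}$ (by invariance of $P_{\rm f}$), yet the sequence blows up, so a naive pointwise argument fails. One must combine the lower semicontinuity of $q\mapsto\sup_{t\leq 0}c(t,q)$ on the compact space $P$ with the minimality of the base flow and the ergodic dichotomy, so that the divergence along $\nu$-generic Shneiberg sequences is transferred into the full-measure statement $\nu(A)=\nu(P_{\rm s})$ rather than a merely pointwise conclusion.
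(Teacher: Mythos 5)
Your reduction to the cocycle formulation (via the two-sided bound $\alpha\,c(t,p)\leq\|\phi(t,p)\,z_0\|\leq\beta\,c(t,p)$ coming from continuity of $e$ on the compact $P$ together with \eqref{c}), and your reduction to a $0$/$1$ dichotomy via the $\theta$-invariance of $P_{\rm s}$ and $A:=\{p:\sup_{t\geq 0}c(t,p)=\infty\}$ under the uniquely ergodic flow, are both correct and match the paper's first steps. You also correctly identify Theorem~\ref{teor-shneiberg}, applied to $c$ and to the time-reversed cocycle, as the engine, and the cocycle-identity identities $\sup_{t\leq 0}c(t,p{\cdot}t_n)=\sup_{u\leq t_n}c(u,p)$ and $\sup_{t\geq 0}c(t,p{\cdot}s_n)=\sup_{u\geq s_n}c(u,p)$ are right. (A small inaccuracy: what Theorem~\ref{teor-shneiberg} requires is the absence of an exponential dichotomy, not the non-coboundary condition of Proposition~\ref{prop-gott}; both hold here because $h\in C_0(P\times\bar U)$, but the former is the hypothesis you should cite.)

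The step ``eliminates the mixed cases,'' which you yourself flag as the main obstacle, is a genuine gap, and I do not see how the tools you name (lower semicontinuity, minimality, the ergodic dichotomy) close it. Your monotone limits only reproduce what invariance already gives trivially: for instance, under the hypothetical $\nu(P_{\rm f})=\nu(A)=1$, each $\sup_{t\leq 0}c(t,p{\cdot}t_n)$ is finite by invariance of $P_{\rm f}$ while the limit is $\infty$; this is not a contradiction, since a sequence of finite numbers may diverge, and a lower semicontinuous function may be finite on a dense invariant set yet unbounded along a sequence. The paper avoids this impasse with a direct construction that is absent from your sketch: assuming $\nu(P_{\rm s})=1$, for $\nu$-a.e.~$p$ one simultaneously has $\sup_{t\leq 0}c(t,p)=\infty$ (so a sequence $t_n^1\downarrow-\infty$ with $c(t_n^1,p)\to\infty$) and, after a Fubini refinement of the Shneiberg set, recurrence at $+\infty$ of every point $p{\cdot}t_n^1$ on the orbit; choosing $t_n^2\uparrow\infty$ with $c(t_n^2-t_n^1,p{\cdot}t_n^1)\to 1$, the cocycle identity yields $c(t_n^2,p)=c(t_n^2-t_n^1,p{\cdot}t_n^1)\,c(t_n^1,p)\to\infty$, hence $p\in A$. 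The converse is the symmetric argument using the reversed Shneiberg sequence and recurrence at $-\infty$. This transport of the divergent negative-time sequence to positive times via recurrence is the key idea your proposal lacks.
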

\begin{proof}
First of all, note that $\sup_{t\geq 0} \|\phi(t,p)\,z_0\|=\infty$ if and only if $\sup_{t\geq 0} c(t,p)=\infty$: it suffices to recall relation~\eqref{c}, take constants $\lambda_1,\lambda_2>0$ such that $\lambda_1\,e(p)\leq z_0\leq \lambda_2\,e(p)$ and apply the monotonicity of both the semiflow and the norm. Second, note that by the cocycle property  the set $\{p\in P\mid \sup_{t\geq 0} c(t,p)=\infty\}$ is invariant, so that its measure is either full or null. Thus, we just need to prove (i).
\par
So, assume first that  $\nu(P_{\rm{s}})=1$. By Theorem~\ref{teor-shneiberg} for almost every $p\in P$ there exists a sequence $(t_n)_n\uparrow \infty$ such that $c(t_n,p)=1$ for any $n\geq 1$. As a consequence, the set of the so-called {\it recurrent points at $\infty$\/},
\[
\{p\in P\mid \text{there exists a sequence }(t_n)_n\uparrow \infty \text{ such that } \lim_{n\to\infty}c(t_n,p)=1\},
\]
has full measure. An application of Fubini's theorem permits to see that for almost every recurrent point, its orbit is made of recurrent points too, so that  we can consider the invariant set of full measure
\begin{equation*}
P_{\rm{r}}^+=\{p\in P \mid p{\cdot}t \;\text{is recurrent at $\infty$ for every}\;t\in\R \}\,.
\end{equation*}
\par
Now, if we take a $p\in P_{\rm{s}}\cap P_{\rm{r}}^+$, on the one hand, by Proposition~\ref{prop-pasado c} we can take a sequence $(t_n^1)_n\downarrow -\infty$ such that $c(t_n^1,p)\to \infty$ as $n\to\infty$. On the other hand, since for any $n\geq 1$, $p{\cdot}t_n^1$ is recurrent at $\infty$, we can find a sequence $(t_n^2)_n\uparrow \infty$ such that $c(t_n^2-t_n^1,p{\cdot}t_n^1)\to 1$ as $n\to\infty$. Then, by the cocycle property,
\[
c(t_n^2,p)=c(t_n^2-t_n^1,p{\cdot}t_n^1)\,c(t_n^1,p)\to \infty \;\text{ as }\; n\to \infty\,,
\]
so that $\sup_{t\geq 0} c(t,p)=\infty$ for almost every $p$ in $P$.
\par
Conversely, assume that $\sup_{t\geq 0} c(t,p)=\infty$ for almost every $p\in P$ and consider the cocycle $\widehat c(t,p)=c(-t,p)$ ($t\in\R$, $p\in P$) for the time-reversed flow on $P$ given by $\widehat \theta:\R\times P\to P$, $(t,p)\mapsto p{\cdot}(-t)$. Theorem~\ref{teor-shneiberg} applied to this cocycle ensures that for almost every $p\in P$ there exists a sequence $(t_n)_n\uparrow \infty$ such that $c(-t_n,p)=1$ for any $n\geq 1$. As a consequence, the set of the so-called {\it recurrent points at $-\infty$\/}
\[
\{p\in P\mid \text{there exists a sequence }(t_n)_n\uparrow \infty \text{ such that } \lim_{n\to\infty}c(-t_n,p)=1\}
\]
has full measure. At this point, a parallel argument to the former one permits to conclude that for almost every $p\in P$, $\sup_{t\leq 0} c(t,p)=\infty$. In other words, by Proposition~\ref{prop-pasado c}, $\nu(P_{\rm{s}})=1$,  as we wanted to see.
\end{proof}
\subsection{The case $\nu(P_{\rm{f}})=1$: chaotic dynamics in the attractor}
In this section we show the presence of chaos, in a very precise sense, inside the attractor, when  $h\in \mathcal{U}(P\times\bar U)$ is such that $\nu(P_{\rm{f}})=1$.
\par
We remark that this case often occurs. The references Johnson~\cite{john81} and Ortega and Tarallo~\cite{orta} provide examples (based on a previous construction by Anosov~\cite{anos}) of quasi-periodic flows $(P,\theta,\R)$ and maps $k\in \mathcal{U}(P)$ with $\sup_{t\in\R}\int_0^t k(p{\cdot}s)\,ds<\infty$ for almost every $p\in P$. In fact these results are expected to be true in a more general setting. As a consequence, using the methods in Proposition~\ref{prop-h+k} and Proposition~\ref{prop-Rf o Rs}, we can assert that for any  regular map  $h\in C_0(P\times\bar U)$  there exists a map $k_1\in C_0(P)$  such that the map  $h+k_1\in\mathcal{U}(P\times\bar U)$ and it  satisfies  $\nu(P_{\rm{f}})=1$. To see it, recall that for every regular $h\in C_0(P\times\bar U)$ we can find a $k_2\in C_0(P)$ such that  $h+k_2\in B(P\times\bar U)$ (see the proof of Theorem~\ref{teor-categoria}) and just take  $k_1=k_2+k$.
\par
For the reader not familiar with the notion of chaos in the sense of Li and Yorke~\cite{liyo}, we include the definition.
\begin{defi}\label{defi-li yorke}
Let $(K,\sigma,\R)$ be a continuous flow on a compact metric space $(K,d)$. (i) A pair $\{x,y\}\subset K$ is called a {\it Li-Yorke pair\/} if
\[
\liminf_{t\to \infty} d(\sigma_tx,\sigma_ty)=0\;\quad\text{and}\;\quad \limsup_{t\to \infty} d(\sigma_tx,\sigma_ty)>0\,.
\]
\par
(ii) A set $D\subseteq K$ is said to be {\it scrambled\/} if every pair $\{x,y\}\subset D$ with $x\not=y$ is a Li-Yorke pair.
\par
(iii) The flow on $K$ is said to be {\it chaotic in the Li-Yorke sense\/} if there exists an uncountable scrambled set in $K$.
\end{defi}
Some dynamical properties associated to the Li-Yorke chaos and its relation with other notions of chaotic dynamics can be found in Blanchard et al.~\cite{bgkm}.
\par
Note that the  restriction of the skew-product semiflow $\tau$ to the global attractor $\A$ is a continuous flow on a compact metric space. For almost periodic equations the base flow $(P,\theta,\R)$ is almost periodic, and thus it is also distal. Consequently, in that case, if $\{(p_1,z_1),(p_2,z_2)\}\subset P\times X$ is a Li-Yorke pair, necessarily $p_1=p_2$. This motivates the following definition. The subindex ch stands for chaos.
\begin{defi}\label{defi-li yorke in measure}
The global attractor $\A$ is said to be {\it fiber-chaotic in measure in the sense of Li-Yorke\/} if there exists a set $P_{\rm{ch}}\subset P$ of full measure such that for every $p\in P_{\rm{ch}}$, $\{p\}\times A(p)$ is an uncountable scrambled set.
\end{defi}
Note that, if  pairs in $\{p\}\times A(p)$ are to exist, it must be $p\in P_{\rm{f}}$. In other words, $P_{\rm{ch}}\subseteq P_{\rm{f}}$. Recall that we are assuming that $\nu(P_{\rm{f}})=1$ for the   first category set $P_{\rm{f}}$, so that there is a chance for chaos in measure. Note also that this notion is different and complements in some way the notion of residually Li-Yorke chaotic sets analyzed by some authors in the context of skew-product flows; for instance, see  Bjerklov and Johnson~\cite{bjjo} and Huang and Yi~\cite{huyi}.
\par
We now give a technical and fundamental result for our purposes, which is a nontrivial generalization of Theorem~35 in~\cite{caloNonl} to this infinite-dimensional setting. Basically, it says that with full measure the attractor consists of entire bounded trajectories of the linear semiflow $\tau_L$.  The subindex l stands for linear.
\begin{teor}\label{teor-casi siempre zona lineal}
Let $h\in\mathcal{U}(P\times\bar U)$ be such that $\nu(P_{\rm{f}})=1$. For the constant $r_0>0$ given in condition {\rm (c4)}, let $\bar r_0\in X$ be the identically equal to $r_0$ map defined on $\bar U$.
Then, there exists an invariant set of full measure $P_{\rm{l}}\subset P_{\rm{f}}$ such that $0\ll b(p)\leq \bar r_0$ for every $p\in P_{\rm{l}}$.
\end{teor}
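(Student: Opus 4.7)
The plan is to combine a monotone scalar quotient along the trajectory $b(p\cdot t)$ with Birkhoff's ergodic theorem, pushing through the principal-bundle dynamics the finite-dimensional argument of Theorem~35 of~\cite{caloNonl}. The hypothesis $\nu(P_{\rm f})=1$ together with Proposition~\ref{prop-Rf o Rs} and Proposition~\ref{prop-pasado c} provides a $\theta$-invariant full-measure set $\tilde P\subseteq P_{\rm f}$ on which $M(p):=\sup_{t\in\R}c(t,p)<\infty$. For $p\in\tilde P$ the entire trajectory $\{b(p\cdot t):t\in\R\}$ lies in the compact global attractor and is uniformly bounded in $X$; since $e(q)\gg 0$ uniformly in $q\in P$, the scalar
\[
\beta(p,t):=\inf\{\beta>0:b(p\cdot t)\leq \beta\,e(p\cdot t)\}
\]
is well defined, finite and strictly positive, and so is $\mu(p,t):=\beta(p,t)/c(t,p)$.

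The core monotonicity is that $\mu(p,\cdot)$ is non-increasing on $\R$. Applying Theorem~\ref{teor-comparacion} to $h(p,x)y+g(p,x,y)\leq h(p,x)y$ on $\{y\geq 0\}$ (condition (c2)) together with the identity $\phi(s,q)\,e(q)=c(s,q)\,e(q\cdot s)$, for $s\geq 0$ I obtain
\[
b(p\cdot(t+s))=u(s,p\cdot t,b(p\cdot t))\leq \phi(s,p\cdot t)\,b(p\cdot t)\leq \beta(p,t)\,c(s,p\cdot t)\,e(p\cdot(t+s)),
\]
so $\beta(p,t+s)\leq \beta(p,t)\,c(t+s,p)/c(t,p)$, i.e., $\mu(p,t+s)\leq\mu(p,t)$. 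Using the strict dissipation $g(q,x,y)<0$ for $y>r_0$ from (c4)--(c5), Proposition~\ref{prop-strong monot}, and the compactness of $\A$, I then upgrade this to a quantitative descent: there exists a continuous function $\kappa:(0,\infty)\to(0,\infty)$ such that for every $p\in\tilde P$, $\delta>0$ and $T>0$,
\[
\kappa(\delta)\int_0^T \mathbf{1}_{\{\|b(p\cdot s)\|\geq r_0+\delta\}}\,ds \leq \mu(p,0)-\mu(p,T)\leq \mu(p,0).
\]

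Consequently, along every orbit in $\tilde P$ the time average of $\mathbf{1}_{\{\|b(q)\|\geq r_0+\delta\}}$ vanishes. Unique ergodicity of $(P,\theta,\R,\nu)$ and Birkhoff's ergodic theorem, applied to this bounded Borel function, yield $\nu(\{q\in P:\|b(q)\|\geq r_0+\delta\})=0$ for every $\delta>0$, hence $\nu(\{p:\|b(p)\|\leq r_0\})=1$. Taking $P_{\rm l}$ to be a $\theta$-invariant full-measure subset of $\tilde P\cap\{p:\|b(p)\|\leq r_0\}$---obtained as the set of points whose entire forward and backward orbit lies in the latter set, which is still of full measure by Fubini and $\theta$-invariance of $\nu$---gives $0\ll b(p)\leq \bar r_0$ for every $p\in P_{\rm l}$, completing the proof.

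The main obstacle is the quantitative descent estimate: translating the pointwise strict inequality $g(\cdot,\cdot,y)<0$ for $|y|>r_0$ into a uniform rate $\kappa(\delta)$ at which $\mu(p,t)$ decreases on the active set $\{\|b(p\cdot t)\|\geq r_0+\delta\}$ requires the strong parabolic maximum principle to convert the pointwise nonlinear gap into a quantitative gap between the nonlinear and linear semiflows measured along the principal direction $e(p\cdot t)$.
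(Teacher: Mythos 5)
Your approach is genuinely different from the paper's. The paper argues by contradiction: assuming the set $D$ of points $p$ with $\|b(p\cdot t)\|>r_0$ for some $t$ has full measure, it uses Lusin's theorem and Birkhoff recurrence to manufacture a compact set $E_2\subset P_{\rm f}$ of positive measure and then proves (their ``Statement 2'') that $\|\phi(s_n,p_2)\,b(p_2)\|\to\infty$ along recurrence times, via a purely qualitative limiting argument (a subsequential limit $z$ of $\phi(s_n,p_2)\,b(p_2)$ is shown by iteration to dominate $\gamma\,b(p_1)$ for arbitrarily large $\gamma$). That yields $\sup_{t\ge 0}c(t,p_2)=\infty$, contradicting Proposition~\ref{prop-Rf o Rs}. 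You instead set up a monotone Lyapunov quotient $\mu(p,t)=\beta(p,t)/c(t,p)$ on the principal bundle, aim for a quantitative descent rate, and close with Birkhoff. This is a clean and appealing idea, but in its present form it has a genuine gap.

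The gap is exactly where you flag it: the quantitative descent estimate
\[
\kappa(\delta)\int_0^T \mathbf{1}_{\{\|b(p\cdot s)\|\geq r_0+\delta\}}\,ds \leq \mu(p,0)-\mu(p,T)
\]
with a universal $\kappa(\delta)$ is asserted, not proven, and as stated it is not correct. Even granting (by uniform parabolic regularity on the compact attractor, plus a uniform Harnack/strong-maximum-principle estimate) a uniform lower bound $w(\tau)\ge \epsilon(\delta)\,e(p\cdot(t+\tau))$ for the defect $w(\tau)=\phi(\tau,p\cdot t)b(p\cdot t)-b(p\cdot(t+\tau))$ over a fixed time step $\tau=\tau(\delta)$ whenever $\|b(p\cdot t)\|\ge r_0+\delta$, dividing through by $c(t+\tau,p)$ to pass from $\beta$ to $\mu$ produces a decrement
\[
\mu(p,t)-\mu(p,t+\tau)\;\ge\;\frac{\epsilon(\delta)}{c(t+\tau,p)}\,,
\]
not a universal $\kappa(\delta)$. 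So the rate is $p$- and $t$-dependent. This is repairable on a full-measure set (on $\tilde P\subset P_{\rm f}$ one has $M(p):=\sup_{t\in\R}c(t,p)<\infty$, whence the decrement is $\ge\epsilon(\delta)/M(p)$), but that must be said. More importantly, the chain of uniform estimates needed for $\epsilon(\delta)$ --- uniform Hölder bounds in $x$ and local Lipschitz bounds in $t$ on the attractor, a lower bound on the measure of the superlevel set $\{b(p\cdot t)\ge r_0+\delta/2\}$ (handling the boundary case where the maximizing $x$ lies on $\partial U$), uniform strong positivity of $\phi(s,q)$ for $s$ in a fixed compact interval away from zero, and a uniform lower bound $e(q)(x)\ge c_1>0$ --- constitutes the actual substance of the lemma and is left entirely to the parenthetical appeal to the ``strong parabolic maximum principle.'' This is the main obstacle and it is precisely what is missing. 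One advantage of the paper's Statement~2 is that it sidesteps all of this: a single qualitative strict inequality $b(p_1\cdot t_3)\ll \phi(t_3,p_1)b(p_1)$ combined with a compactness/iteration argument does the work without any uniform rate, at the cost of the somewhat more elaborate Lusin/Birkhoff set-up. Finally, a minor point: the invocation of \emph{unique} ergodicity at the Birkhoff step is not needed (the indicator is not continuous so unique ergodicity adds nothing here); ergodicity of $\nu$ plus Birkhoff for $L^1$ functions is what you actually use.
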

\begin{proof}
To see that the invariant set $\{p\in P\mid b(p{\cdot}t)\leq \bar r_0 \;\forall\;t\in \R\}$  has full measure, let us assume by contradiction that its complementary set
\[
D=\{p\in P\mid \text{there exist a } t\in \R \text{ and an } x\in \bar U \text{ such that } b(p{\cdot}t)(x) >  r_0\}
\]
has measure one. Note that by Theorem~\ref{teor-estr atractor caso u}, $D\subset P_{\rm{f}}$ and $b(p)\gg 0$ for any $p\in D$.
\par
Recall that the metric space $X=C(\bar U)$ is separable (in particular it is a second-countable topological space) and the measure $\nu$ is a regular Borel measure. In these conditions, we can apply the general form of the classical Lusin's theorem (for instance, see Feldman~\cite{feld}) to the semicontinuous (thus measurable) function  $b:P\to X$  to affirm that, fixed an $\varepsilon>0$, there exists a continuous map $\wit b:P\to X$  such that $\nu(\{p\in P\mid b(p)=\wit b(p)\})>1-\varepsilon$. Since $\nu$ is regular, we can take  a compact set $E_0\subset D\cap \{p\in P\mid b(p)=\wit b(p)\}$ with $\nu(E_0)>0$.
\par
A standard application of Birkhoff's ergodic theorem to the characteristic function of $E_0$ implies that for almost every $p\in P$ there exists a real sequence $(t_n)_n\uparrow \infty$
such that $p{\cdot}t_n\in E_0$ for every $n\geq 1$. Once more, since $\nu$ is a regular measure, we can take a compact set $E_1$ with $\nu(E_1)>0$ such that
\[
E_1\subset \{p\in E_0\mid \text{there exists a sequence }(t_n)_n\uparrow \infty \text{ with } p{\cdot}t_n\in E_0 \;\forall \,n\geq 1 \}\,.
\]
Finally, consider
\[
E_2= \{p\in E_1\mid \text{there exists a sequence }(s_n)_n\uparrow \infty \text{ with } p{\cdot}s_n\in E_1 \;\forall \,n\geq 1 \}
\]
which, again by Birkhoff's ergodic theorem, has $\nu(E_2)=\nu(E_1)>0$. Since the proof is rather technical, we  continue with a series of statements to make it easier to read.
\par
{\it Statement 1\/}: $E_1\subset D_+$, for the set
\[
D_+= \{p\in P\mid \text{there exist a } t>0 \text{ and an } x\in \bar U \text{ such that } b(p{\cdot}t)(x) >  r_0\}\,.
\]
\par
{\it Proof\/}. As a first step, let us prove that there exists a $T_0>0$ such that for any $p\in E_0$ there exist a $t=t(p)$ with $|t|\leq T_0$ and an $x=x(p)\in\bar U$ with $b(p{\cdot}t)(x) >  r_0$. This follows from a compactness argument: note that for a fixed $p\in E_0$ there exist a $t=t(p)\in \R$ and an $x=x(p)\in \bar U$ such that $b(p{\cdot}t)(x) >  r_0$. Since $b(p{\cdot}t)(x)=u(t,p,b(p))(x)=u(t,p,\wit b(p))(x)$, by continuity, there exists a ball $B(p,\delta(p))$ for an appropriate $\delta(p)>0$ such that for any $\wit p\in B(p,\delta(p))\cap E_0$, also  $u(t,\wit p,\wit b(\wit p))(x)=b(\wit p{\cdot}t)(x) >  r_0$. Then, since $E_0\subset \cup_{p\in E_0} B(p,\delta(p))\cap E_0$, there is a finite covering, say $E_0\subset \cup_{i=1}^N B(p_i,\delta(p_i))\cap E_0$ and it suffices to take $T_0=\max\{|t(p_1)|,\ldots,|t(p_N)|\}$.
\par
Now, to finish, take $p\in E_1$ and let us check that $p\in D_+$. Take $s>T_0$ with $p{\cdot}s\in E_0$ and apply the first step: then, there exist a $t=t(p{\cdot}s)$ with $|t|\leq T_0$ and an $x=x(p{\cdot}s)\in\bar U$ with $b(p{\cdot}(t+s))(x) >  r_0$. Since $t+s>0$, $p\in D_+$ and we are done.
\par
{\it Statement 2\/}: If $p_2\in E_2$ and $p_2{\cdot}s_n\in E_1$, $n\geq 1$ for a sequence  $(s_n)_n\uparrow\infty$, then, $\lim_{n\to\infty} \|\phi(s_n-1,p_2)\,b(p_2)\|=\infty$.
\par
{\it Proof\/}. Argue by contradiction and assume without loss of generality that $\{\phi(s_n-1,p_2)\,b(p_2)\mid n\geq 1\}$ is a bounded set in $X$. Once more arguing as in Proposition~2.4 in Travis and Webb~\cite{trwe}, we obtain that the set $\{\phi(s_n,p_2)\,b(p_2)\mid n\geq 1\}$ is relatively compact: just write $\phi(s_n,p_2)\,b(p_2)=\phi(1,p_2{\cdot}(s_n-1))\,\phi(s_n-1,p_2)\,b(p_2)$.
Thus, the set $\{\phi(s_n,p_2)\,b(p_2)\mid n\geq 1\}$ has at least a limit point. Taking a subsequence if necessary, we can assume that  $p_2{\cdot}s_n\to p_1\in E_1$ and  $\phi(s_n,p_2)\,b(p_2)\to z$ as $n\to\infty$. Since $p_2{\cdot}s_n,\,p_1\in E_1\subset E_0$ for any $n\geq 1$, then $b(p_2{\cdot}s_n)\to b(p_1)\gg 0$. Comparing solutions of the nonlinear and the linear problems, $b(p_2{\cdot}s_n)\leq \phi(s_n,p_2)\,b(p_2)$ for any $n\geq 1$, so that in the limit $0\ll b(p_1)\leq z$.
\par
By Statement~1, for $p_1\in E_1\subset D_+$, there exist a $t_1>0$ and an $x_1\in\bar U$ such that $b(p_1{\cdot}t_1)(x_1) >  r_0$. If we look at the solution $b(p_1{\cdot}t)$, $t\geq 0$ of the nonlinear problem for $p_1$, it lies below the solution $\phi(t,p_1)\,b(p_1)$ of the linear problem with the same initial condition $b(p_1)$. Since at time $t_1$, $b(p_1{\cdot}t_1)(x_1) >  r_0$, the zone where the problem is strictly nonlinear, there must exist a time $t_2>t_1$ such that $b(p_1{\cdot}t_2)< \phi(t_2,p_1)\,b(p_1)$. Now, once more comparing solutions and applying the strong monotonicity of the semiflow, for any $t> t_2$, $b(p_1{\cdot}t)\ll \phi(t,p_1)\,b(p_1)$. Let us fix a time $t_3>0$ such that $b(p_1{\cdot}t_3)\ll \phi(t_3,p_1)\,b(p_1)$.
\par
Let $\gamma_1\geq 1$ be the biggest possible  such that $0\ll \gamma_1\,b(p_1)\leq z$. We can then take a sufficiently close $\gamma_2>\gamma_1$ such that
\[
\gamma_2\,b(p_1{\cdot}t_3)\ll \phi(t_3,p_1)\,\gamma_1\,b(p_1)\,.
\]
\par
Now, since $\lim_{n\to\infty} \phi(s_n+t_3,p_2)\,b(p_2)=\lim_{n\to\infty} \phi(t_3,p_2{\cdot}s_n)\,\phi(s_n,p_2)\,b(p_2)=\phi(t_3,p_1)\,z \geq \phi(t_3,p_1)\,\gamma_1\,b(p_1)\gg \gamma_2\,b(p_1{\cdot}t_3)= \gamma_2\,\lim_{n\to\infty} b(p_2{\cdot}(s_n+t_3))$ (recall that $b$ is an equilibrium for the nonlinear problem),  we deduce that there exists an $n_0\in\N$ such that for any $n\geq n_0$, $\phi(s_n+t_3,p_2)\,b(p_2)\geq \gamma_2\,b(p_2{\cdot}(s_n+t_3))$. For any $n\geq n_0$ such that $s_n>s_{n_0}+t_3$ we write $s_n=r_n+s_{n_0}+t_3$ for a positive $r_n$. Then, $\phi(s_n,p_2)\,b(p_2)=\phi(r_n,p_2{\cdot}(s_{n_0}+t_3))\,\phi(s_{n_0}+t_3,p_2)\,b(p_2)\geq \phi(r_n,p_2{\cdot}(s_{n_0}+t_3))\,\gamma_2\,b(p_2{\cdot}(s_{n_0}+t_3))\geq \gamma_2\,b(p_2{\cdot}(r_n+s_{n_0}+t_3))=\gamma_2\,b(p_2{\cdot}s_n)$. Taking limits as $n\to \infty$, we deduce that $z\geq \gamma_2\,b(p_1)$ with $\gamma_2>\gamma_1$, in contradiction with  the definition of $\gamma_1$. We are done.
\par
{\it Statement 3\/}: For any $p_2\in E_2$, $\sup_{t\geq 0} c(t,p_2)=\infty$.
\par
{\it Proof\/}. Since for $p_2\in E_2$, $b(p_2)\gg 0$, there exists a $\gamma>0$ such that $e(p_2)\geq \gamma\,b(p_2)$, so that by monotonicity  $c(t,p_2)\,e(p_2{\cdot}t)=\phi(t,p_2)\,e(p_2)\geq \gamma\,\phi(t,p_2)\,b(p_2)$ for any $t\geq 0$. The boundedness of $e(p_2{\cdot}t)$ for $t\geq 0$ and Statement~2 then imply that $\sup_{t\geq 0} c(t,p_2)=\infty$, as wanted.
\par
To finish the proof, note that, since $E_2 \subset P_{\rm{f}}$, Statement 3 falls into contradiction with Proposition~\ref{prop-Rf o Rs}. Therefore, the invariant set $\{p\in P\mid b(p{\cdot}t)\leq \bar r_0 \;\forall\;t\in \R\}$  has full measure and it suffices to take the intersection of this set with $P_{\rm{f}}$ to obtain the set $P_{\rm{l}}$ in the statement of the theorem. The proof is finished.
\end{proof}
We can now prove that there is chaos in the global attractor.
\begin{teor}\label{teor-caos}
Let $h\in\mathcal{U}(P\times\bar U)$ be such that $\nu(P_{\rm{f}})=1$. Then, the global attractor $\A$ is fiber-chaotic in measure in the sense of Li-Yorke.
\end{teor}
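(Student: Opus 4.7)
The strategy is to leverage Theorem~\ref{teor-casi siempre zona lineal}, which provides an invariant full-measure set $P_{\rm l}\subset P_{\rm f}$ such that $0\ll b(p)\leq \bar r_0$ for every $p\in P_{\rm l}$. On $P_{\rm l}$ the sections of the attractor lie entirely in the region $\{|y|\leq r_0\}$ where the nonlinearity $g$ vanishes by condition~(c4), so the dynamics on $\A$ above $P_{\rm l}$ collapses onto the linear semiflow $\tau_L$, and the complicated oscillation of the 1-dim cocycle $c(t,p)$ on the principal bundle will supply the Li--Yorke pairs.

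I would first describe the sections $A(p)$ for $p\in P_{\rm l}$. Since $P_{\rm l}$ is invariant and each section satisfies $A(p{\cdot}t)\subseteq [-b(p{\cdot}t),b(p{\cdot}t)]$ with $\|b(p{\cdot}t)\|\leq r_0$ for every $t\in\R$, each $z\in A(p)$ is the value at time $0$ of an entire bounded trajectory of $\tau$ whose image lies pointwise in $[-r_0,r_0]$, hence by~(c4) a genuine entire bounded solution of the linear abstract equation~\eqref{abstract eq} along the orbit of $p$. Proposition~\ref{prop-compacto invariante}~(i) then forces $z\in X_1(p)=\langle e(p)\rangle$, so in particular $b(p)=\beta(p)\,e(p)$ for a positive scalar $\beta(p)$ satisfying $\beta(p{\cdot}t)=c(t,p)\,\beta(p)$, and
\[
A(p)=\{\lambda\,e(p)\mid |\lambda|\leq \beta(p)\}\,,
\]
which is an uncountable (indeed one-parameter) set. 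Moreover, for any $z_i=\lambda_i\,e(p)\in A(p)$ with $\lambda_1\neq \lambda_2$, both trajectories stay in the linear region and so $u(t,p,z_i)=\phi(t,p)\,z_i=\lambda_i\,c(t,p)\,e(p{\cdot}t)$ for every $t\geq 0$, whence
\[
\|u(t,p,z_1)-u(t,p,z_2)\|=|\lambda_1-\lambda_2|\,c(t,p)\,.
\]

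It then suffices to show that for almost every $p\in P_{\rm l}$ one has $\liminf_{t\to\infty} c(t,p)=0$ and $\limsup_{t\to\infty} c(t,p)>0$. For the $\limsup$, Theorem~\ref{teor-shneiberg} yields, for almost every $p$, a sequence $t_n\uparrow\infty$ with $c(t_n,p)=1$, so $\limsup c(t,p)\geq 1$. For the $\liminf$ I would argue by contradiction: if $\liminf_{t\to\infty} c(t,p)>0$, continuity and positivity of $c(\cdot,p)$ on compact intervals would promote this to $\inf_{t\geq 0} c(t,p)>0$; combined with the upper bound $c(t,p)=\beta(p{\cdot}t)/\beta(p)\leq r_0/\beta(p)$ valid for all $t\in\R$, this gives $\sup_{t\geq 0} |\ln c(t,p)|<\infty$. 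Since $p\in P_{\rm f}$ supplies through Proposition~\ref{prop-pasado c} the matching bound $\sup_{t\leq 0} |\ln c(t,p)|<\infty$, Proposition~\ref{prop-gott} would then place $h$ in $B(P\times\bar U)$, contradicting $h\in\mathcal{U}(P\times\bar U)$. Defining $P_{\rm ch}$ as the intersection of $P_{\rm l}$ with the full-measure set provided by Theorem~\ref{teor-shneiberg} yields an invariant subset of full measure over which every pair of distinct points in $\{p\}\times A(p)$ is a Li--Yorke pair. The delicate step I anticipate is the middle paragraph: one must carefully justify that each $z\in A(p)$ with $p\in P_{\rm l}$ really extends to an entire orbit whose \emph{backward} history also stays in the linear region, which is what ultimately allows Proposition~\ref{prop-compacto invariante}~(i) to place $A(p)$ inside the principal bundle.
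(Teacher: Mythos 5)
Your argument is correct and reaches the conclusion, but it diverges from the paper's proof in the way the oscillation of $\|u(t,p,z_1)-u(t,p,z_2)\|$ is produced. You both begin the same way: on the invariant full-measure set $P_{\rm l}$, every entire trajectory in $\A$ stays in the region $|y|\leq r_0$, so $\tau=\tau_L$ there, Proposition~\ref{prop-compacto invariante}~(i) forces $A(p)\subset X_1(p)=\langle e(p)\rangle$, and the fiber distance collapses to a scalar multiple of the $1$-dim cocycle. From there the paper proceeds by taking a compact $E_0\subset P_{\rm l}$ of positive measure on which $b$ is continuous (via Lusin's theorem) and using Birkhoff recurrence into $E_0$ to get $\limsup\|b(p{\cdot}t)\|>0$, then letting $p{\cdot}t_n\to p_0\in P_{\rm s}$ and using that $b$ is continuous at $p_0$ with $b(p_0)=0$ to get $\liminf\|b(p{\cdot}t)\|=0$. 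You instead obtain the $\limsup$ from Theorem~\ref{teor-shneiberg} (return times with $c(t_n,p)=1$) and the $\liminf$ by contradiction through Proposition~\ref{prop-gott}: the attractor bound $\beta(p{\cdot}t)\leq r_0$ already caps $c(t,p)$ for all $t\in\R$, so $\liminf_{t\to\infty}c(t,p)>0$ would give $\sup_{t\geq 0}|\ln c(t,p)|<\infty$ and hence, by Proposition~\ref{prop-gott}~(iv), $h\in B(P\times\bar U)$, a contradiction. This is arguably cleaner: it avoids Lusin's theorem and the appeal to continuity points of $b$, and it extracts more from the structural information that $A(p)$ is a full $1$-parameter segment of $X_1(p)$.

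One small inaccuracy worth flagging: you say Proposition~\ref{prop-pasado c} supplies the matching bound $\sup_{t\leq 0}|\ln c(t,p)|<\infty$, but that proposition only gives $\sup_{t\leq 0}c(t,p)<\infty$, i.e.\ the upper half of that two-sided bound. This is harmless here since Proposition~\ref{prop-gott}~(iv) requires only \emph{one} of $\sup_{t\geq 0}|\ln c(t,p)|<\infty$ or $\sup_{t\leq 0}|\ln c(t,p)|<\infty$, and you have already established the first one directly from $\inf_{t\geq 0}c(t,p)>0$ and the attractor bound; the detour through Proposition~\ref{prop-pasado c} can simply be dropped. Also, your $P_{\rm ch}$ (intersection of $P_{\rm l}$ with the Shneiberg full-measure set) need not be invariant as written, but Definition~\ref{defi-li yorke in measure} only asks for a full-measure set, so this too is immaterial.
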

\begin{proof}
For the set $P_{\rm{l}}\subset P_{\rm{f}}$ given in Theorem~\ref{teor-casi siempre zona lineal}, let us take a compact set $E_0\subset P_{\rm{l}}$ with $\nu(E_0)>0$ such that the restriction $b|_{E_0}$ is continuous (which exists by the generalized Lusin's theorem) and consider the set of full measure
\[
P_{\rm{ch}}= \{p\in P_{\rm{l}} \mid \text{there exists a sequence }(s_n)_n\uparrow \infty \text{ with } p{\cdot}s_n\in E_0 \;\forall \,n\geq 1 \}\,.
\]
\par
Now, take a $p\in P_{\rm{ch}}$, and let us see that any pair of distinct points $(p,z_1),\,(p,z_2)\in \{p\}\times A(p)\subset \{p\}\times [-b(p),b(p)]$ is a Li-Yorke pair. Since $p\in P_{\rm{l}}$, $b(p)\gg 0$ and the orbits $b(p{\cdot}t)$ and  $u(t,p,z_i)$ for $i=1,2$ lie, roughly speaking, in the linear zone of the problem, so that they are entire bounded trajectories for the linear skew-product semiflow, and by Proposition~\ref{prop-compacto invariante} (i) they lie inside the principal bundle. As a consequence $z_1, z_2, b(p) \in X_1(p)$ and there exist distinct  $\lambda_1,\,\lambda_2\in\R$ such that
\[
\|u(t,p,z_2)-u(t,p,z_1)\|=|\lambda_2-\lambda_1|\,\|b(p{\cdot}t)\| \quad\hbox{for any}\; t\geq 0\,.
\]
\par
Then, first take a sequence $(s_n)_n\uparrow \infty$   with $p{\cdot}s_n\in E_0$  for every $n\geq 1$ to obtain that $\limsup_{t\to\infty}\|u(t,p,z_2)-u(t,p,z_1)\|>0$, since $b\gg 0$ over the compact set $E_0$ where $b$ is continuous. Second, we now take a $p_0\in P_{\rm{s}}$ and  a sequence $(t_n)_n\uparrow \infty$   with $p{\cdot}t_n\to p_0$ as $n\to\infty$. By Theorem~\ref{teor-estr atractor caso u}, $\lim_{n\to\infty} b(p{\cdot}t_n)=b(p_0)=0$, so that we can conclude that $\liminf_{t\to\infty}\|u(t,p,z_2)-u(t,p,z_1)\|=0$. The proof is finished.
\end{proof}
\begin{nota} Since the set $P_{\rm{ch}}$ is also invariant, the previous dynamical behaviour can be interpreted in the formulation of  processes, by saying that for every $p\in P_{\rm{ch}}$ the pullback attractor $\{A(p{\cdot}t)\}_{t\in \R}$ for the process in $X$, $S_p(t,s)(\,{\cdot}\,)=u(t-s,p{\cdot}s,\,{\cdot}\,)$ ($t\geq s$), is Li-Yorke chaotic.
\end{nota}
As a consequence of the next result, we can affirm that the closed  set
\[
\F=\bigcup_{p\in P} \{p\}\times[-b(p),b(p)]\subset P\times X
\]
is also fiber-chaotic in measure in the sense of Li-Yorke, meaning that for a subset of $P$ with full measure its sections contain a big uncountable scrambled set. Recall that we have denoted by $\Pi_{1,p}:X\to X_1(p)$,  and $\Pi_{2,p}:X\to X_2(p)$ ($p\in P$) the projections over the subspaces of the continuous separation for $\tau_L$.
\begin{prop}
Let $h\in\mathcal{U}(P\times\bar U)$ be such that $\nu(P_{\rm{f}})=1$.  Consider the complete metric space $\F$ which is positively invariant for $\tau$. Given a pair of distinct points $(p,z_1), (p,z_2)\in \F$ with $p\in P_{\rm{ch}}$, two things can happen:
\begin{itemize}
\item[(i)] either $\Pi_{1,p}(z_1)=\Pi_{1,p}(z_2)$, and then it is an asymptotic pair, meaning that
\[
\lim_{t\to\infty}\|u(t,p,z_2)-u(t,p,z_1)\|=0\,;
\]
\item[(ii)] or $\Pi_{1,p}(z_1)\not=\Pi_{1,p}(z_2)$, and then it is a Li-Yorke pair.
\end{itemize}
\end{prop}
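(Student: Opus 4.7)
The plan is to reduce the dynamics on $\mathcal{F}$ restricted to forward orbits starting at $p\in P_{\rm ch}$ to the linear semiflow $\tau_L$ on the principal bundle, and then read off both cases from the behaviour of the $1$-dim cocycle $c(t,p)$. The first step is the crucial linearization: since $p\in P_{\rm ch}\subset P_{\rm l}$, Theorem~\ref{teor-casi siempre zona lineal} gives $b(p{\cdot}t)\leq\bar r_0$ for every $t\in\R$. For any $(p,z)\in\mathcal{F}$ the inequalities $-b(p)\leq z\leq b(p)$ combined with the monotonicity of $\tau$ yield $-b(p{\cdot}t)\leq u(t,p,z)\leq b(p{\cdot}t)$ for all $t\geq 0$, and hence $|u(t,p,z)(x)|\leq r_0$ pointwise. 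Condition~(c4) then forces $g(p{\cdot}t,x,u(t,p,z)(x))\equiv 0$, so that $u(t,p,z)=\phi(t,p)\,z$ for every $t\geq 0$. In particular $b(p{\cdot}t)=\phi(t,p)\,b(p)$ is a negatively bounded entire solution, and Proposition~\ref{prop-compacto invariante}~(i) places it in the principal bundle: $b(p)=\lambda(p)\,e(p)$ with $\lambda(p)>0$, and $b(p{\cdot}t)=\lambda(p)\,c(t,p)\,e(p{\cdot}t)$.

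Using that $e:P\to X$ is continuous with $e(q)\gg 0$ and $P$ compact, the function $q\mapsto\min_{x\in\bar U}e(q)(x)$ attains a positive minimum $c_1>0$. Together with $b(p{\cdot}t)\leq\bar r_0$ this yields the uniform upper bound $c(t,p)\leq r_0/(c_1\lambda(p))$ for $t\geq 0$. Now I decompose $z_2-z_1=\mu\,e(p)+w$, with $\mu\,e(p)=\Pi_{1,p}(z_2-z_1)$ and $w=\Pi_{2,p}(z_2-z_1)\in X_2(p)$, so that
\begin{equation*}
u(t,p,z_2)-u(t,p,z_1)=\mu\,c(t,p)\,e(p{\cdot}t)+\phi(t,p)\,w\,,\quad t\geq 0\,.
\end{equation*}
Property~(5) of the continuous separation gives $\|\phi(t,p)\,w\|\leq M\,e^{-\delta t}c(t,p)\|w\|$, and the previous uniform bound on $c(t,p)$ makes this term tend to $0$ exponentially. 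In case~(i), $\mu=0$ and therefore $\|u(t,p,z_2)-u(t,p,z_1)\|\to 0$, so the pair is asymptotic. In case~(ii), $\mu\neq 0$ and the norm is asymptotically equal to $|\mu|\,c(t,p)$ (since $\|e(p{\cdot}t)\|=1$), so Li-Yorke behaviour reduces to showing $\liminf_{t\to\infty}c(t,p)=0$ and $\limsup_{t\to\infty}c(t,p)>0$.

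The $\liminf$ is handled exactly as at the end of the proof of Theorem~\ref{teor-caos}: since $P_{\rm s}$ is residual (Theorem~\ref{teor-estr atractor caso u}), pick $p_0\in P_{\rm s}$ and $t_n\uparrow\infty$ with $p{\cdot}t_n\to p_0$; upper semicontinuity of $b$ together with $b(p_0)=0$ gives $b(p{\cdot}t_n)\to 0$, i.e.\ $\lambda(p)\,c(t_n,p)\to 0$. For the $\limsup$ I use the defining property of $P_{\rm ch}$: there exist $s_n\uparrow\infty$ with $p{\cdot}s_n\in E_0$, and on the compact set $E_0\subset P_{\rm l}$ the continuous map $q\mapsto\|b(q)\|=\lambda(q)$ is bounded below by some $\eta>0$, so $c(s_n,p)\geq\eta/\lambda(p)>0$. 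The main subtlety is not any single step but the conceptual one in the first paragraph, namely recognizing that on $\mathcal{F}$ the sandwich by $b$ forces the nonlinear term to switch off identically along forward orbits, reducing a priori nonlinear dynamics to the already analyzed linear cocycle; everything else is a routine unpacking of the continuous separation together with the characterization of $P_{\rm s}$ and $P_{\rm ch}$ used in Theorem~\ref{teor-caos}.
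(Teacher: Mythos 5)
Your proof is correct and follows essentially the same route as the paper's: linearize on $\F$ via Theorem~\ref{teor-casi siempre zona lineal} and condition~(c4), decompose $z_2-z_1$ through the continuous separation, kill the $X_2$-component with property~(5), and then read off the Li--Yorke oscillation from the sequences of times used in Theorem~\ref{teor-caos}. The only cosmetic difference is that you derive the uniform bound $\sup_{t\geq 0}c(t,p)<\infty$ explicitly from $b(p{\cdot}t)=\lambda(p)\,c(t,p)\,e(p{\cdot}t)\leq\bar r_0$ and the uniform positive lower bound on $e$, whereas the paper obtains the same boundedness by observing that the semiorbit of $r\,e(p)$ stays in $\F$ for small $r$; these are equivalent observations.
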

\begin{proof}
First note that since $P_{\rm{ch}}\subset P_{\rm{l}}$ for the set $P_{\rm{l}}$ given in Theorem~\ref{teor-casi siempre zona lineal}, $b(p)\gg 0$ and the semiorbits of the pairs $(p,z_1), (p,z_2)\in \F$ for $\tau$ are actually semiorbits for the linear semiflow $\tau_L$, so that $\|u(t,p,z_2)-u(t,p,z_1)\|=\|\phi(t,p)\,z_2-\phi(t,p)\,z_1\|$, and besides, $b(p)\in X_1(p)$ by Proposition~\ref{prop-compacto invariante} (i).
\par
So, if $\Pi_{1,p}(z_1)=\Pi_{1,p}(z_2)$,  then $\|u(t,p,z_2)-u(t,p,z_1)\|=\|\phi(t,p)\,(\Pi_{2,p}(z_2)-\Pi_{2,p}(z_1))\|\leq M\,e^{-\delta t}\,c(t,p)\,\|\Pi_{2,p}(z_2)-\Pi_{2,p}(z_1)\|$ by using property (5) in the description of the continuous separation of $\tau_L$ and relation~\eqref{c}. Since for $p\in P_{\rm{l}}$ the semiorbit of $r e(p)$ for $r>0$ small enough remains in the bounded zone $\F$, $\sup_{t\geq 0}c(t,p)<\infty$ and consequently the pair is asymptotic.
\par
Finally, if $\Pi_{1,p}(z_1)\not=\Pi_{1,p}(z_2)$, $\lim_{t\to\infty}\|\phi(t,p)\,(\Pi_{2,p}(z_2)-\Pi_{2,p}(z_1))\|=0$ as before; and for $\Pi_{1,p}(z_1),\,\Pi_{1,p}(z_2),\, b(p)\in X_1(p)$ we just argue as in the proof of Theorem~\ref{teor-caos} to conclude that the pair is Li-Yorke chaotic.
\end{proof}
\subsection{A non-autonomous discontinuous pitchfork bifurcation diagram}
The purpose of this final section is to present the  conclusions of the
previous sections of the paper in terms of non-autonomous bifurcation theory.
We want to emphasize that the classical bifurcation patterns can exhibit, in this
non-autonomous framework, ingredients of dynamical complexity which are not
possible in the autonomous models.
\par
Precisely, we look at the one-parametric family ($\gamma\in\R$) of scalar reaction-diffusion problems over a minimal, uniquely ergodic and aperiodic flow $(P,\theta,\R)$, with Neumann or Robin boundary conditions, given for each $p\in P$ by
\begin{equation}\label{bifurcation}
\left\{\begin{array}{l} \des\frac{\partial y}{\partial t}  =
 \Delta \, y+(\gamma+h(p{\cdot}t,x))\,y+g(p{\cdot}t,x,y)\,,\quad t>0\,,\;\,x\in U,
  \\[.2cm]
By:=\alpha(x)\,y+\des\frac{\partial y}{\partial n} =0\,,\quad  t>0\,,\;\,x\in \partial U,\,
\end{array}\right.
\end{equation}
where we assume that $h\in \mathcal{U}(P\times\bar U)$ and $g:P\times \bar U\times \R\to \R$ is continuous, of class $C^1$ with respect to $y$, and it satisfies conditions (c1)-(c5) and also
\begin{itemize}
\item[(c6)] $g(p,x,y)$ is convex in $y$ for $y\leq 0$ and concave in $y$ for $y\geq 0$.
\end{itemize}
\par
For instance, $g$ might just be the map given by
\[
g(p,x,y)=\left\{\begin{array}{lr} k(p,x)\,(y+r_0)^3\,, & y\leq -r_0\\
0\,, & -r_0\leq y \leq r_0\\
-k(p,x)\,(y-r_0)^3\,, & y\geq r_0
\end{array}\right.
\]
for a certain positive map $k\in C(P\times\bar U)$ and the constant $r_0$ in (c4), which provides a non-autonomous version  of the classical Chafee-Infante equation. The autonomous equation was studied by Chafee and Infante~\cite{chin} and some non-autonomous versions of this equation together with  bifurcation problems have also been treated in the literature; for instance, see Carvalho et al.~\cite{calaro12}.
\par
The main result reads as follows. Let us denote by $\A_\gamma$ the global attractor of the corresponding skew-product semiflow $\tau_\gamma$ for the value $\gamma$ of the parameter. Let us also denote by $b_\gamma$ its upper boundary map.
\begin{teor}
The following assertions hold:
\begin{itemize}
\item[(i)] If $\gamma<0$, then $\A_\gamma=P\times \{0\}$ is the global attractor and it is globally exponentially stable.
\item[(ii)] If $\gamma=0$, then the global attractor $\A_0\subseteq \bigcup_{p\in P} \{p\}\times [-b_0(p),b_0(p)]$ is a pinched set which contains a unique minimal set $P\times\{0\}$. Its structure has been described in detail in Theorem~$\ref{teor-estr atractor caso u}$. In particular, if $\nu(P_{\rm{f}})=1$, then $\A_0$ is fiber-chaotic in measure in the sense of Li-Yorke.
\item[(iii)] If $\gamma>0$, then the global attractor $\A_\gamma\subseteq \bigcup_{p\in P} \{p\}\times [-b_\gamma(p),b_\gamma(p)]$ with $b_\gamma(p)\gg 0$ for every $p\in P$ and the maps $\pm b_\gamma$ define continuous equilibria. The copies of the base $K_\gamma^\pm=\{(p,\pm b_\gamma(p))\mid p\in P\}$ are globally exponentially stable minimal sets in $P\times \Int X_\pm$, whereas the trivial minimal set $P\times\{0\}$ is unstable. In addition, $b_0(p)=\lim_{\gamma\to 0^+} b_\gamma(p)$.
\end{itemize}
\end{teor}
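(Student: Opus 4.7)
The starting point is to identify the upper Lyapunov exponent of the linearization of~\eqref{bifurcation} along the null solution. Writing the linear part as the family~\eqref{pdefamily} with coefficient $\gamma+h$ and applying Proposition~\ref{prop-h+k}\,(i) with the constant function $k\equiv\gamma$ (whose $\nu$-mean is $\gamma$), together with $h\in\mathcal{U}(P\times\bar U)\subset C_0(P\times\bar U)$, we obtain $\lambda_P(\gamma+h)=\gamma$. Items (i)--(iii) thus match the three Lyapunov regimes, and the backbone of the proof is to route each regime through tools already developed.

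Item (i) follows from the $\lambda_P<0$ analysis of Cardoso et al.~\cite{cardoso}: the principal bundle contracts exponentially, and since $g$ vanishes on the strip $|y|\leq r_0$ by (c4), every trajectory enters this linear zone in finite time and decays exponentially, giving $\A_\gamma=P\times\{0\}$ with global exponential attraction. For item (ii), the description of $\A_0$ and the semicontinuous boundary $b_0$ is Theorem~\ref{teor-estr atractor caso u}, and the fiber-chaos assertion under $\nu(P_{\rm{f}})=1$ is Theorem~\ref{teor-caos}. The only additional observation is the uniqueness of the minimal set: any minimal $M\subset\A_0$ projects onto $P$ and for each $p\in P_{\rm{s}}$ the fiber $A_0(p)=[-b_0(p),b_0(p)]$ reduces to $\{0\}$, so $(p,0)\in M$; minimality together with $\tau$-invariance of $P\times\{0\}$ then forces $M=P\times\{0\}$.

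For item (iii) with $\gamma>0$, and hence $\lambda_P>0$, the plan is to invoke the concave non-autonomous bifurcation machinery of N\'{u}\~{n}ez et al.~\cite{nuos4}. The combination of strong monotonicity (Proposition~\ref{prop-strong monot}), concavity (c6), dissipativity (c5), and the positive principal spectrum $\Sigma_{\rm{pr}}=\{\gamma\}$ produces a unique continuous equilibrium $b_\gamma\colon P\to\Int X_+$; by the oddness in (c3), $-b_\gamma$ gives the symmetric branch, the $\omega$-limit of any $(p,z)$ with $z>0$ equals $K_\gamma^+=\{(p,b_\gamma(p))\mid p\in P\}$ with exponential rate governed by $\gamma$, and $P\times\{0\}$ is unstable because the linearization at $0$ has positive Lyapunov exponent.

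The final assertion $b_0(p)=\lim_{\gamma\to 0^+}b_\gamma(p)$ is, in my view, the main obstacle. Theorem~\ref{teor-comparacion} yields the monotonicity $\gamma_1<\gamma_2\Rightarrow b_{\gamma_1}\leq b_{\gamma_2}$, so the pointwise limit $b_*(p):=\lim_{\gamma\to 0^+}b_\gamma(p)$ exists and satisfies $b_*\geq b_0$ (since each $b_\gamma\geq b_0$ by comparison of the two pullback formulas~\eqref{b(p)}). The converse $b_*\leq b_0$ requires identifying the value $b_*(p)$ with the trace at $p$ of a bounded entire trajectory of $\tau_0$: for fixed $T>0$, $p\in P$ and $r$ large, the continuous dependence of mild solutions on the coefficient gives $u_\gamma(T,p\cdot(-T),rz_0)\to u_0(T,p\cdot(-T),rz_0)$ as $\gamma\to 0^+$, and the pullback formula~\eqref{b(p)} together with uniform dissipativity from (c5) allows one to pass to the limit also in $T$, placing $b_*(p)$ inside $A_0(p)$. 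The delicacy lies in the fact that $b_0$ is only upper semicontinuous and actually vanishes on the residual set $P_{\rm{s}}$, so the limit is genuinely discontinuous precisely at those points, which is the meaning of the \emph{discontinuous} pitchfork bifurcation.
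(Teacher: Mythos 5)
Your proposal follows the paper's route step by step: compute $\lambda_P(\gamma+h)=\gamma$ via Proposition~\ref{prop-h+k}\,(i), cite Cardoso et al.~\cite{cardoso} for $\gamma<0$, cite Theorems~\ref{teor-estr atractor caso u} and~\ref{teor-caos} for $\gamma=0$, and use N\'{u}\~{n}ez et al.~\cite{nuos4} (Case~A1 of their Theorem~3.8, activated by the uniform persistence that comes from $\lambda_P>0$) for $\gamma>0$; the added remark proving uniqueness of the minimal set at $\gamma=0$ is correct and a welcome detail the paper leaves implicit. The only place where you diverge is the final step $b_0=\lim_{\gamma\to 0^+}b_\gamma$: you propose to pass to the limit directly inside the pullback formula~\eqref{b(p)}, which requires justifying the interchange of the limits in $\gamma$ and $T$ (you appeal to ``uniform dissipativity'' but this is not quite spelled out, and the double limit is the delicate point); the paper avoids this entirely by noting that, since each $b_\gamma$ is a continuous equilibrium for $\tau_\gamma$, continuous dependence at a \emph{fixed} time $t$ shows the monotone limit $b_*$ is an equilibrium for $\tau_0$, hence its orbit is a bounded entire trajectory lying in $\A_0$, giving $b_*\leq b_0$ with no interchange needed. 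Your version can be made rigorous (monotonicity in both $\gamma$ and $T$ plus Dini-type arguments), but the paper's one-line observation that ``a decreasing limit of equilibria is an equilibrium'' is cleaner and worth adopting.
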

\begin{proof}
First of all note that by Proposition~\ref{prop-h+k} (i) the upper Lyapunov exponent of the linearized problem of~\eqref{bifurcation} is $\lambda_P(\gamma+h)=\gamma+\lambda_P(h)=\gamma$, since  $\lambda_P(h)=0$.
\par
With no need of condition (c6), (i) has been proved in Proposition~5 in Cardoso et al.~\cite{cardoso} and (ii) has been proved in Theorems~\ref{teor-estr atractor caso u} and~\ref{teor-caos}. Also it has been mentioned in~\cite{cardoso} that if $\lambda_P(\gamma+h)=\gamma>0$, then the semiflow is uniformly persistent in the interior of both the negative and the positive cones (this follows from Mierczy{\'n}ski and Shen~\cite{mish} or from the general theory developed in Novo et al.~\cite{noos7}), so that $b_\gamma(p)\gg 0$ for every $p\in P$, there exists a global attractor for the restriction of the semiflow to both of these cones, and the trivial minimal set $P\times\{0\}$ is unstable.
\par
The fact that $K_\gamma^\pm=\{(p,\pm b_\gamma(p))\mid p\in P\}$ are globally exponentially stable minimal sets follows, once (c6) is brought into play, from the general theory for monotone and concave skew-product semiflows written by N\'{u}\~{n}ez et al.~\cite{nuos4}. More precisely, the uniform persistence of the semiflow precludes the existence of infinitely many strongly positive minimal sets as the ones in Case A2 of Theorem~3.8 in~\cite{nuos4}, so that only Case A1 of this theorem can hold: there exists exactly one strongly positive minimal set, which is a globally exponentially stable copy of the base (the same for the negative cone). Note that, in particular, the maps $\pm b_\gamma$ define continuous equilibria.
\par
Finally, let us see that $b_0(p)=\lim_{\gamma\to 0^+} b_\gamma(p)$. Fix  $z_0\gg 0$ and $r>0$ and note that if $0\leq\gamma_1\leq \gamma_2$, then by Theorem~\ref{teor-comparacion},  $u_{\gamma_1}(T,p{\cdot}(-T),r z_0)\leq u_{\gamma_2}(T,p{\cdot}(-T),r z_0)$ for any $p\in P$ and $T\geq 0$. Since relation~\eqref{b(p)} holds for $r>0$ large enough, it follows that  $b_{\gamma_1}(p)\leq b_{\gamma_2}(p)$ for any $p\in P$. By monotonicity and compactness of the semiflow, there exists the limit $\lim_{\gamma\to 0^+}b_{\gamma}(p)= b_*(p)$ for each $p\in P$. Since $b_0\leq b_\gamma$ for any $\gamma>0$, it is $b_0(p)\leq b_*(p)$ for each $p\in P$. On the other hand, $b_*(p)$ defines an equilibrium for the problem with $\gamma=0$, and therefore it must be contained in the global attractor $\A_0$, so that $b_*(p)\leq b_0(p)$ for any $p\in P$. Therefore, $b_*(p)= b_0(p)$ for any $p\in P$ and the proof is finished.
\end{proof}
\subsection*{Acknowledgements} The authors would like to thank Prof.~Anthony Quas for providing them with a short  proof of  Theorem~\ref{teor-shneiberg}.

\end{document}